\def\date{29.1.2011}  % Last modif. KH
\renewcommand{\ph}{\varphi} 
\renewcommand{\phi}{\varphi} 
\renewcommand{\Exp}{\exp} 
\newcommand{\shalf}{{\textstyle{\frac{1}{2}}}}
\newcommand{\derat}[1]{\frac{d}{dt} \hbox{\vrule width0.5pt
                height 5mm depth 3mm${{}\atop{{}\atop{\scriptstyle t=#1}}}$}}
\renewcommand{\1}{{\bf 1}} 
\renewcommand{\Re}{\mathop{{\rm Re}}\nolimits}
\renewcommand{\Im}{\mathop{{\rm Im}}\nolimits}
\newcommand{\eset}{\emptyset} 
\newcommand{\R}{\bR} 
\renewcommand{\C}{\mathbb C} 
\newcommand{\Z}{\mathbb Z} 
\newcommand{\cA}{\mathcal A} 
\newcommand{\cD}{\mathcal D} 
\newcommand{\cU}{\mathcal U} 
\newcommand{\ran}{\rangle} 
\newcommand{\lan}{\langle} 
\newcommand{\oline}{\overline} 
\renewcommand{\:}{\colon}
\newcommand{\subeq}{\subseteq}
\newcommand{\dd}{{\tt d}} 
\newcommand{\g}{\fg} 
\newcommand{\fu}{\mathfrak{u}}
\newcommand{\su}{\mathfrak{su}}
\newcommand{\fsl}{\mathfrak{sl}}
\newcommand{\so}{\mathfrak{so}}
\newcommand{\Spann}{\mathop{{\rm span}}\nolimits}
\newcommand{\SO}{\mathop{{\rm SO{}}}\nolimits}
\renewcommand{\U}{\mathop{{\rm U{}}}\nolimits}
\newcommand{\OO}{\mathop{{\rm O{}}}\nolimits}
\newcommand{\SU}{\mathop{{\rm SU{}}}\nolimits}
\newcommand{\mlabel}{\label} 
\newcommand{\res}{\vert}
\renewcommand{\hat}{\widehat} 
\renewcommand{\tilde}{\widetilde} 
\newcommand{\Sq}{S_{\bullet}}
\newcommand{\qq}{q_{\bullet}}
\newcommand{\boh}{\boldsymbol{1}_H}
\newcommand{\bo}{\boldsymbol{1}}
\newcommand{\qg}{q_{\Gamma}}
\DeclareMathOperator{\Spec}{Spec}
\newcommand{\fgc}{\fg_c}
\newcommand{\fgC}{\fg_\bC}
\newcommand{\cHy}{\cH^\infty}
\newcommand{\cHk}{\cH_K}
\newcommand{\cHr}{\cH_K^0}
\newcommand{\dif}[1]{\frac{d}{d#1}}
\newcommand{\df}[1]{\frac{d}{d#1}\Bigr|_{#1=0}}
\newcommand{\dpi}{\mathrm{d}\pi}
\newcommand{\dro}{\mathrm{d}\rho}
\newcommand{\odro}{\ov{\mathrm{d}\rho}}
\newcommand{\odpi}{\ov{\mathrm{d}\pi}}
\newcommand{\eidpi}[1]{e^{i\ov{\mathrm{d}\pi}(#1)}}
\newcommand{\interior}{\mathrm{int}}
\newtheorem{theo}{Theorem}
\newtheorem{pro}[theo]{Proposition}
\newtheorem{lem}[theo]{Lemma}
\newtheorem{cor}[theo]{Corollary}
\numberwithin{theo}{section} % 
\theoremstyle{definition}
\newtheorem{de}[theo]{Definition}
\newtheorem{rem}[theo]{Remark}
\newtheorem{exs}[theo]{Examples} 
\newenvironment{prf}{\begin{proof}}{\end{proof}}
\title[Analytic extension techniques]
{Analytic extension techniques for\\ unitary representations 
of Banach--Lie groups} 
\author{St\'ephane Merigon and Karl-Hermann Neeb}
\address{Department  Mathematik, FAU Erlangen-N\"urnberg, 
Bismarckstrasse 1 1/2, 91054-Erlangen, Germany; neeb@mi.uni-erlangen.de}
\begin{document}

\maketitle

%{\bf This is {\tt luma.tex}.} 

\begin{abstract} 
Let $(G,\theta)$ be a Banach--Lie group with involutive 
automorphism $\theta$, $\g = \fh \oplus \fq$ be the $\theta$-eigenspaces 
in the Lie algebra $\g$ of $G$, and $H = (G^\theta)_0$ be the identity 
component of its group of fixed points. An 
Olshanski semigroup is a semigroup $S \subeq G$ of the form 
$S = H \exp(W)$, where $W$ is an open $\Ad(H)$-invariant convex 
cone in $\fq$ and the polar map 
$H \times W \to S, (h,x) \mapsto h \exp x$ is a diffeomorphism. 
Any such semigroup carries an involution $*$ satisfying 
$(h\exp x)^* = (\exp x) h^{-1}$. Our central result, 
generalizing the L\"uscher--Mack Theorem for finite dimensional groups, 
asserts 
that any locally bounded $*$-representation $\pi \: S \to B(\cH)$ 
with a dense set of smooth vectors defines by ``analytic continuation''  
a unitary representation of the simply connected Lie group 
$G_c$ with Lie algebra $ \g_c = \fh + i \fq$. 
We also characterize those unitary representations of $G_c$ 
obtained by this  construction. With similar methods, we further 
show that semibounded unitary representations extend to 
holomorphic representations of complex Olshanski semigroups.   
\end{abstract}

\section{Introduction}

There are many important results in the unitary representation 
theory of Lie groups related to analytic continuation. 
Here a key ingredient is the special case
 where $\pi \: \R\to \U(\cH)$ is a strongly continuous unitary 
one-parameter group and $A$ its self-adjoint infinitesimal 
generator, i.e., $\pi(t) = e^{itA}$ in the sense of measurable 
functional calculus. 
Then the unitary one-parameter group $\pi$ extends to a holomorphic 
one-parameter semigroup $\hat\pi \: \bC_+ = \R + i \R_{\geq 0} \to B(\cH)$ 
if and only if $A$ is bounded below. This extension then restricts 
to a locally bounded non-degenerate hermitian one-parameter 
semigroup $t \mapsto \hat\pi(it)$. Conversely, any such hermitian 
one-parameter group has a self-adjoint infinitesimal generator 
$-A$ and $e^{izA}$ then yields an extension to $\bC_+$, where the boundary 
values form a unitary one-parameter group. The key point of this 
picture is that self-adjoint operators $A$ whose spectrum 
is bounded below can be viewed as the infinitesimal generators of two 
objects: a unitary one-parameter group (Stone's Theorem) 
and a hermitian one-parameter semigroup of bounded operators 
(Hille--Yosida Theorem). 
This is the one-parameter context of what we are dealing with 
in the present paper for Banach--Lie groups. 

We call a pair $(G,\theta)$ consisting of a Banach--Lie group 
$G$ and an involutive automorphism $\theta$ of $G$ 
a {\it symmetric Banach--Lie group}. 
We decompose its Lie algebra $\g =  \fh \oplus \fq$ 
into $\pm 1$-eigenspaces of $\theta$  
and write $H := G^\theta_0$ for the identity component of the 
group of $\theta$-fixed points in $G$. 
Let $W \subeq \fq$ be an open convex $\Ad(H)$-invariant cone 
for which the polar map 
\[ H \times W \to H \exp(W), \quad (h,x) \mapsto h \exp x \] 
is a diffeomorphism onto an open subsemigroup $S := S_H(W) 
:= H \exp(W)$ of $G$. 
Then $S$ is an involutive semigroup with respect to the involution 
$s^* := \theta(s)^{-1}$ which is called an {\it Olshanski semigroup}. 
In Appendix~\ref{A} we explain how to obtain such semigroups 
$S_{H'}(W)$, where $H'$ is a connected Lie group locally isomorphic 
to $H$ for which $H'$ and $S_{H'}(W)$ need not be contained in a 
Lie group (cf.\ \cite[Ex.~II.13]{Ne92}). 

Our first main result (Theorem~\ref{thm:3.6}, proved in 
Section~\ref{sec:4}) is 
the following Banach version of the L\"uscher--Mack Theorem 
(\cite{LM75}): Let $\rho:S = S_H(W)\ra B(\cH)$ be a non-degenerate 
strongly continuous $*$-representation of $S$ which is {\it smooth} 
in the sense 
that the space $\cH^\infty$ of smooth vectors is dense. 
If $G_c$ is the simply connected Lie group with Lie algebra 
$\g_c := \fh + i \fq$, then there exists a smooth unitary 
representation $(\pi, \cH)$ of $G_c$ on $\cH$ which is uniquely determined 
by the requirement that the unitary one-parameter groups 
corresponding to elements $x \in \fh$ are those obtained 
by ``extending'' $\rho$ to $H$ and 
the generators of the one-parameter groups corresponding to elements of 
$iW \subeq i\fq$ are of the form $i\dd\rho(x)$, where 
$\dd\rho(x)$ is the infinitesimal generator of the hermitian 
one-parameter semigroup $t \mapsto \rho(\exp tx)$. 
For finite dimensional groups this result is due to 
M.~L\"uscher and G.~Mack (\cite{LM75}; see also \cite{HILGERTNEEB}). 
As their methods make 
heavy use of coordinates obtained from products of one-parameter 
groups, we have to develop a completely new approach in the 
Banach context. Actually our approach is more 
direct and uses only quite general methods, such as 
the criteria for the integrability of infinitesimal unitary 
representations of Banach--Lie algebras 
from \cite{Mer10}. The one-parameter case 
described above corresponds to $G = \R$, $\theta(g) = -g$, 
$S = \R_{> 0}$ and $G_c = i \R$. 

An interesting, much less involved, special 
case arises if $S = G$ is the whole group. 
Then our assumption is that $G$ has a diffeomorphic 
polar decomposition $H \exp \fq$ and our theorem establishes a correspondence 
between $*$-representations $\pi \: G \to \GL(\cH)$ by bounded 
operators and norm-continuous unitary representations of 
$G_c$. If $G$ is a finite dimensional semisimple Lie group 
and $\theta$ a Cartan involution, then 
this is Weyl's well-known unitary  trick 
relating finite dimensional representations of 
$G$ to unitary representations of the compact group~$G_c$. 

In the context of the L\"uscher--Mack Theorem, it is a natural 
question which unitary representations of $G_c$ are obtained 
from representations of a semigroup $S_H(W)$. 
To answer this and related questions, we consider for a smooth 
unitary representation $(\pi, \cH)$ of a Lie group $G$ 
the convex function 
\[ s_\pi \: \g \to \R \cup \{\infty\}, \quad 
s_\pi(x) := \sup\big(\Spec(i\dd\pi(x))\big).\] 
We call $\pi$ {\it semibounded} if $s_\pi$ is bounded in the 
neighborhood of some point $x_0 \in \g$, and if 
$W \subeq \g$ is a convex cone, then we say that 
$\pi$ is {\it $W$-semibounded} if $s_\pi$ is locally bounded on 
$W$. Now the converse to the L\"uscher--Mack Theorem 
(Corollary~\ref{C:conv}) asserts that a representation 
of $G_c$ is obtained from a smooth representation of $S$ if and only if 
it is $iW$-semibounded. Here the main difficulty 
is to show that, for an $iW$-semibounded unitary representation of 
$G_c$, the prescription 
$\rho(h\exp x) := \pi(h) e^{-\dd\pi(x)}$ define a representation 
and to see that it is actually smooth. Here 
we use methods developed previously 
in \cite{Mer10} and \cite{Ne10b}. 

With the same tools we obtain the following Holomorphic 
Extension Theorem (cf.\ \cite{Ol82} and \cite{Ne00} for the finite dimensional 
case): For every open invariant 
cone $W \subeq \g$ and a corresponding complex Olshanski semigroup 
$S_G(iW)$, each $W$-semibounded unitary representation 
of $G$ extends to a holomorphic representation of 
$S_G(iW)$. 

In the finite dimensional context Lawson's Theorem 
guarantees the existence of 
Olshanski semigroups $S_H(W)$ under quite simple 
requirements on the spectra of the operators $\ad x$, $x \in W$ 
(cf.\ \cite{La94}, \cite{Ne00}). Since Lawson's arguments involve 
local compactness in a crucial way, they do not generalize 
to the Banach context. However, natural examples 
of Olshanski semigroups arise as 
compression semigroups of bounded symmetric complex domains in 
Banach spaces, symmetric Hilbert domains, and real forms of 
such domains (cf.\ \cite{Ne01}). 

As we show in \cite{MN11}, the real forms $\cD$ of 
symmetric Hilbert domains $\cD_c$, i.e., the fixed points for an 
antiholomorphic involution $\sigma$,  are of particular interest in 
representation theory (see \cite{Ka83, Ka97} for a classification). 
Here $G_c$ is  a central extension of the identity component 
of $\Aut(\cD_c)$, which leads to the class of {\it hermitian Lie groups} 
whose semibounded representations are classified in \cite{Ne10c}. 
Choosing a base point in $\cD$ 
leads to an involution $\theta$ on $G_c$ with 
$\cD_c \cong G_c/G_c^\theta$, 
and conjugation with $\sigma$ induces another involution on $G_c$. 
The complex domain $\cD_c$ has a natural 
compression semigroup of form 
$S_c = G_c \exp(W_c)$, where $W_c \subeq i\g_c$ is an open 
invariant cone (\cite{Ne01}). 
The subgroup $H := (G_c^\sigma)_0$ is invariant under $\theta$ and 
$\cD \cong H/H^\theta$ is a real symmetric space. 
The corresponding semigroup is $S_H(W)$ for $W = W_c^\sigma$. 
Here our L\"uscher--Mack Theorem provides a bridge between 
$*$-representations of $S_H(W)$ and unitary representations of the group 
$G_c$. A remarkable feature of the infinite dimensional 
context is that a large 
class of the irreducible separable 
continuous unitary representations of 
$H$ extend to contraction representations of $S_H(W)$ 
with the same commutant 
and hence further to representations of the larger 
group $G_c$.  In particular 
we obtain an automatic extension of a large class of 
irreducible representations of $H$ to irreducible 
semibounded representations of $G_c$. 
For irreducible domains, the irreducible semibounded representations 
of $G_c$ are classified in \cite{Ne10c}, and this in turn leads to a 
classification of the corresponding representations of $H$. 

One of the central motivations to study results like the 
L\"uscher--Mack Theorem is  that there are natural sources 
of involutive representations of real Olshanski semigroups 
$S_H(W)$. Here the constructions based on ``reflection 
positivity'' are of particular interest because 
of their connections with euclidean, resp., relativistic 
quantum field theories (cf.\ \cite{JOl00}, \cite{NO11}, 
\cite{LM75}, \cite{GJ81}). In this context the semigroup 
$S_H(W)$ constitutes a bridge between unitary representations 
of the groups $G$ and $G_c$. However, these semigroups 
do not exist in all situations, where the passage from 
$G$ to $G_c$ is of interest, a typical example is 
the euclidean motion group $G = \R^4 \rtimes \SO_4(\R)$ 
and the Poincar\'e group $G_c = \R^4 \rtimes \SO_{1,3}(\R)$. 
This was the motivation for Fr\"ohlich, Osterwalder and Seiler 
to introduce the concept of a virtual representations of a 
symmetric Lie group (\cite{FOS83}). It would be very interesting 
to see if a suitable variant of 
this concept can be developed for Banach--Lie groups. 

\subsection*{Notation and conventions} 

If $\cH$ is a Hilbert space, we write $B(\cH)$ for the algebra of
bounded linear operators on $\cH$, $\GL(\cH)$ for its group of 
units, and $\U(\cH) \subeq \GL(\cH)$ for the unitary group. 

If $G$ is a topological group and $\cH$ a complex Hilbert space, 
then a {\it unitary representation of $G$ on $\cH$}, denoted 
$(\pi, \cH)$, is a homomorphism $\pi \: G \to \U(\cH)$ which is 
continuous with respect to the strong operator topology, i.e., all 
orbit maps $\pi^v \: G \to \cH, g \mapsto \pi(g)v$ are continuous. 

If $G$ is a Banach--Lie group, then we write $\g$ for its 
Lie algebra. It is a Banach--Lie algebra, i.e., a Banach space 
with a continuous Lie bracket. 

If $S$ is a semigroup, we write $\lambda_s(t) = st$ and 
$\rho_s(t) = ts$ for left and right multiplications on $S$. 
If a neutral element in $S$ exists, it is denoted by~$e$. 

Let $M$ be a set. A kernel function $K \: M \times M \to \C$ 
is called {\it positive definite} if for 
$x_1,\ldots, x_n \in M$, $n \in \bN$, the matrix 
$(K(x_i,x_j))_{i,j=1,\ldots n}$ is positive definite. 
For any such kernel there exists a unique Hilbert subspace 
$\cH_K \subeq \C^M$ of functions on $M$ with continuous evaluation maps 
given by $f(m) = \lan f, K_m\ran$, $K_m(x) = K(x,m), x, m \in M$. 
In particular, the subspace $\cH_K^0$, spanned by the functions 
$K_m$, $m \in M$, is dense in $\cH_K$. 
If $\gamma \: M \to \cH$ is a function with values in a Hilbert space 
whose range is total in the sense that it spans a dense subspace and 
$\lan\gamma(x), \gamma(y) \ran = K(y,x)$ for $x,y \in M$, then we have a 
unitary map $\Phi_\gamma \: \cH \to \cH_K, \Phi_\gamma(v)(x) = \lan v,\gamma(x)\ran$ (cf.\ \cite[Thm.~I.1.6]{Ne00}).

\section{Geometric symmetric operators on reproducing kernel spaces} 

Let $\M$ be a Banach manifold,  $K\in C^{\infty}(\M\times\M,\bC)$ 
be a smooth positive definite kernel on $\M$ and 
$\cH_K \subeq \C^M$ be the corresponding reproducing kernel Hilbert space. 
According to \cite[Thm.~7.1]{Ne10b}, the map 
$\M\ra\cHk,m\mt K_m,$ is smooth, so that $\phi(m) = \lan \phi, K_m\ran$ 
for $\phi \in \cH_K$ implies that $\cH_K \subeq C^\infty(\M,\bC)$. 

Let $V$ be a vector field on $\M$, $\cL_V$ the associated derivation of $C^\infty(\M,\bC)$, and
$\phi_t^V$ be its local flow, defined at $m\in\M$ for
$0\leq t<\epsilon(m)$. 
The Lie derivative on functions is given by 
\begin{equation}
  \label{eq:liedet} 
(\cL_V\ph)(m):=\df{t}\ph(\phi_t^Vm).
\end{equation}
We also consider $\cL_V$ as an unbounded operator on  
$\cH_K$ defined on the domain
$$\D_V:=\{\ph\in\cH_K\mid\cL_V\ph\in\cH_K\}.$$

\begin{de}
We say that $V$ is {\it symmetric with respect to $K$} 
(or {\it $K$-symmetric}), if 
\[ (\cL_VK_m)(n)=\ov{(\cL_VK_n)(m)} \quad \mbox{for } \quad 
m,n \in \M.\] 
\end{de}

The key point in the preceding definition is that it can be expressed 
in terms of the kernel and the vector field, but the following proposition 
shows that we can draw interesting conclusions for the corresponding 
unbounded operator on $\cH_K$. 

\begin{pro} \label{prop:2.2} 
Let $V$ be a $K$-symmetric vector field. Then, for every $m\in\M$,
\begin{equation}\label{E:diff}
\cL_VK_{m}=\df{t}K_{\phi_t^Vm}\in\cH.
\end{equation}
In particular $\cH_K^0\subseteq\D_V$ and 
the operator ${\cL_V}|_{\cH_K^0}$ is symmetric.
If $\ph\in\cH$ then 
\begin{equation}\label{E:diff2}
\ps{\ph,\cL_VK_m}=(\cL_V\ph)(m) \quad \mbox{ for } \quad m \in \M.
\end{equation}
\end{pro}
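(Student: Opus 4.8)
The plan is to construct, for each $m\in\M$, an explicit element $\xi_m\in\cH_K$ by differentiating the curve $t\mapsto K_{\phi_t^Vm}$ in $\cH_K$, and then to use the reproducing property together with the $K$-symmetry hypothesis to identify $\xi_m$ with $\cL_VK_m$; once this identification is made, \eqref{E:diff}, \eqref{E:diff2}, the inclusion $\cH_K^0\subeq\D_V$ and the symmetry of $\cL_V|_{\cH_K^0}$ all follow by formal manipulations. First I would record that $c_m\colon t\mapsto K_{\phi_t^Vm}$ is $C^1$ in $t$ (for $0\le t<\epsilon(m)$): the integral curve $t\mapsto\phi_t^Vm$ of $V$ is $C^1$, and by \cite[Thm.~7.1]{Ne10b} the map $\M\to\cH_K,\ m'\mapsto K_{m'}$ is smooth, so the composite is $C^1$. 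Hence
\[ \xi_m:=\df{t}K_{\phi_t^Vm}=c_m'(0)\in\cH_K \]
exists (a one-sided derivative at $t=0$, which is all that is used below). For $\ph\in\cH_K$ the bounded conjugate-linear functional $\lan\ph,\cdot\ran$ on $\cH_K$ commutes with differentiation, so the reproducing property $\ph(n)=\lan\ph,K_n\ran$ and \eqref{eq:liedet} give
\[ \lan\ph,\xi_m\ran=\df{t}\lan\ph,K_{\phi_t^Vm}\ran=\df{t}\ph(\phi_t^Vm)=(\cL_V\ph)(m). \]

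Next I would identify $\xi_m$ with $\cL_VK_m$. Taking $\ph=K_n$ in the identity just obtained gives $\lan K_n,\xi_m\ran=(\cL_VK_n)(m)$, whence $\xi_m(n)=\ov{\lan\xi_m,K_n\ran}=\ov{(\cL_VK_n)(m)}$; by the $K$-symmetry of $V$ the right-hand side equals $(\cL_VK_m)(n)$. Therefore $\cL_VK_m=\xi_m$ as functions on $\M$, which is \eqref{E:diff}, and in particular $\cL_VK_m\in\cH_K$, i.e.\ $K_m\in\D_V$; since $\cL_V$ is linear, $\D_V$ is a subspace, so $\cH_K^0\subeq\D_V$. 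Replacing $\xi_m$ by $\cL_VK_m$ in the displayed identity above yields \eqref{E:diff2} (recall that $\ph\in\cH_K$ is smooth, so $(\cL_V\ph)(m)$ is a well-defined number). Finally, to check that $\cL_V|_{\cH_K^0}$ is symmetric it suffices, by sesquilinearity, to test on the spanning vectors $K_m$, and using $\cL_VK_m\in\cH_K$, the reproducing property, and $K$-symmetry,
\[ \lan\cL_VK_m,K_n\ran=(\cL_VK_m)(n)=\ov{(\cL_VK_n)(m)}=\ov{\lan\cL_VK_n,K_m\ran}=\lan K_m,\cL_VK_n\ran. \]

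The only genuinely analytic ingredient is the first step: that $t\mapsto K_{\phi_t^Vm}$ is differentiable as a curve into the Hilbert space $\cH_K$, so that $\xi_m$ lives there, and that the derivative may be pulled through the inner product. Both are routine once one invokes the smoothness of $m\mapsto K_m$ from \cite[Thm.~7.1]{Ne10b}, so I do not expect a serious obstacle. The conceptual point worth emphasizing is the role of the $K$-symmetry hypothesis in the second step: it is precisely what turns the ``wrong-slot'' quantity $\ov{(\cL_VK_n)(m)}$ produced by the Hilbert-space computation back into the pointwise Lie derivative $(\cL_VK_m)(n)$, thereby exhibiting $\cL_VK_m$ as the element $\xi_m\in\cH_K$.
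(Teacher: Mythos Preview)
Your proof is correct and follows essentially the same path as the paper's: differentiate $t\mapsto K_{\phi_t^Vm}$ in $\cH_K$ using the smoothness of $m\mapsto K_m$, then use $K$-symmetry to identify the result pointwise with $\cL_VK_m$, and pull the inner product through the derivative to obtain \eqref{E:diff2}. There is a harmless slip in your chain ``$\xi_m(n)=\ov{\lan\xi_m,K_n\ran}=\ov{(\cL_VK_n)(m)}$'': the reproducing property gives $\xi_m(n)=\lan\xi_m,K_n\ran$ without a bar, so the correct chain is $\xi_m(n)=\lan\xi_m,K_n\ran=\ov{\lan K_n,\xi_m\ran}=\ov{(\cL_VK_n)(m)}$, but your conclusion is unaffected.
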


\begin{proof}
Since $\M\ra\cH_K$,
$m\ra K_m$ is smooth, the derivative $\dif{t}K_{\phi_t^Vm}$
exists in $\cH_K$ and, for every $n\in\M$,
\begin{align*}
(\cL_VK_m)(n)
= \oline{(\cL_VK_n)(m)}
=\df{t}K(n,\phi_t^Vm)
=\Big(\df{t}K_{\phi_t^Vm}\Big)(n). 
\end{align*}
For every $\ph\in\cH$ we therefore have
\[\ps{\ph,\cL_VK_m}=\ps{\ph,\df{t}K_{\phi_t^Vm}}=\df{t}\ph(\phi_t^Vm)=
(\cL_V\ph)(m). 
\qedhere\] 
\end{proof}

\begin{rem} Note that \eqref{E:diff2} implies 
in particular 
\begin{equation}\label{E:symm}
\ps{\ph,\cL_VK_m}=\ps{\cL_V\ph,K_m} 
\quad \mbox{ for } \quad \ph\in\D_V. 
\end{equation}
\end{rem}

Below we recall Fr\"ohlich's Theorem on unbounded symmetric semigroups
as it is stated in \cite[Cor.~1.2]{FRO}. 
Actually Fr\"ohlich assumes that the Hilbert space $\cH$ is separable, 
but this is not necessary for the conclusion. Replacing the assumption 
of weak measurability by weak continuity,  
all arguments in \cite{FRO} work for non-separable spaces as well. 

\begin{theo}[Fr\"ohlich] \mlabel{thm:2.4} 
Let $H$ be a symmetric operator defined on the domain $\D$ dense in
the Hilbert space $\cH$. Suppose that for every $\Phi\in\D$ there
exists $\epsilon(\Phi)>0$ such that the equation
$$\dif{t}\Phi(t)=H\Phi(t)$$
has a solution satisfying $\lim_{t\ra0}\Phi(t)=\Phi$ and $\Phi(t)\in\D$
for $0\leq t<\epsilon(\Phi)$. Then the operator $H$ is essentially 
self-adjoint and $\Phi(t) = e^{t\oline H}\Phi$.
\end{theo}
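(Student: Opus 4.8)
The plan is to use that the symmetry of $H$ rigidly constrains the two‑point functions of solution curves, to convert this constraint into a positive measure by a classical exponential‑convexity argument, and then to read off both essential self‑adjointness and the formula $\Phi(t)=e^{t\oline H}\Phi$ from that measure.

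First I would record the rigidity. Given $\Phi,\Psi\in\D$ with solution curves, the map $u(s,t):=\langle\Phi(s),\Psi(t)\rangle$ is $C^1$, and since $H$ is symmetric with $\Phi(s),\Psi(t)\in\D$ it satisfies $\partial_s u=\partial_t u$; hence $u(s,t)$ depends only on $s+t$, say $u(s,t)=F_{\Phi,\Psi}(s+t)$, with $F_{\Phi,\Psi}(0)=\langle\Phi,\Psi\rangle$. Taking $\Phi=\Psi$, the function $F_\Phi:=F_{\Phi,\Phi}$ satisfies $F_\Phi(r)=\|\Phi(r/2)\|^2$ and $\sum_{i,j}c_i\overline{c_j}\,F_\Phi(t_i+t_j)=\big\|\sum_i c_i\Phi(t_i)\big\|^2\ge 0$, i.e.\ $F_\Phi$ is exponentially convex on $[0,2\epsilon(\Phi))$. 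By the Bernstein--Widder representation there is then a positive finite Borel measure $\mu_\Phi$ on $\mathbb{R}$ of total mass $\|\Phi\|^2$ with $F_\Phi(r)=\int_{\mathbb{R}}e^{r\lambda}\,d\mu_\Phi(\lambda)$ for $0\le r<2\epsilon(\Phi)$. Differentiating $F_\Phi(r)=\langle\Phi(r-\delta),\Phi(\delta)\rangle$ twice and using symmetry once more gives $F_\Phi''(r)=\|H\Phi(r/2)\|^2$ on $(0,2\epsilon(\Phi))$, so letting $r\to 0$ (the solution curve being $C^1$) yields $\int_{\mathbb{R}}\lambda^2\,d\mu_\Phi=\|H\Phi\|^2<\infty$. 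Finally, since $\langle\Phi(s),\Phi(t)\rangle=\langle e^{s\bullet},e^{t\bullet}\rangle_{L^2(\mu_\Phi)}$ (writing $e^{t\bullet}$ for $\lambda\mapsto e^{t\lambda}$, which lies in $L^2(\mu_\Phi)$ for $t<\epsilon(\Phi)$) and, by an analytic‑continuation‑plus‑Fourier‑uniqueness argument, $\{e^{t\bullet}:0\le t<\epsilon(\Phi)\}$ is total in $L^2(\mathbb{R},\mu_\Phi)$, there is a unitary $U_\Phi$ from $\cH_\Phi:=\overline{\Spann\{\Phi(t):0\le t<\epsilon(\Phi)\}}$ onto $L^2(\mathbb{R},\mu_\Phi)$ with $U_\Phi\Phi(t)=e^{t\bullet}$; in particular every $\psi\in\cH$ yields a finite measure $\mu_{\Phi,\psi}$ with $\langle\Phi(t),\psi\rangle=\int_{\mathbb{R}}e^{t\lambda}\,d\mu_{\Phi,\psi}(\lambda)$ on $[0,\epsilon(\Phi))$.

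Next I would prove that $H$ is essentially self‑adjoint by showing that its two deficiency spaces are trivial. Suppose $H^*\psi=\pm i\psi$. For every $\Phi\in\D$ one computes $\frac{d}{dt}\langle\Phi(t),\psi\rangle=\langle\Phi(t),H^*\psi\rangle=\mp i\langle\Phi(t),\psi\rangle$, so $\langle\Phi(t),\psi\rangle=e^{\mp it}\langle\Phi,\psi\rangle$ on $[0,\epsilon(\Phi))$. Comparing this with $\langle\Phi(t),\psi\rangle=\int e^{t\lambda}\,d\mu_{\Phi,\psi}$, both sides extend holomorphically to the strip $\{0<\Re t<\epsilon(\Phi)\}$ and therefore agree there; but on a vertical line $\Re t=r$ in that strip the integral is bounded by $\int e^{r\lambda}\,d|\mu_{\Phi,\psi}|<\infty$, whereas $e^{\mp it}\langle\Phi,\psi\rangle$ has modulus $e^{\pm\Im t}\,|\langle\Phi,\psi\rangle|$ and is unbounded on that line unless $\langle\Phi,\psi\rangle=0$. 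Since $\D$ is dense, $\psi=0$; hence both deficiency indices vanish and $H$ is essentially self‑adjoint.

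For the exponential formula, on $L^2(\mathbb{R},\mu_\Phi)$ multiplication by $\lambda$ is self‑adjoint, and transporting it back by $U_\Phi$ gives a self‑adjoint operator $A_\Phi$ on $\cH_\Phi$ with $A_\Phi\Phi(t)=\Phi'(t)=H\Phi(t)$ (here $\int\lambda^2 e^{2t\lambda}\,d\mu_\Phi=F_\Phi''(2t)<\infty$ is used) and $e^{tA_\Phi}\Phi=U_\Phi^{-1}(e^{t\bullet})=\Phi(t)$ for $0\le t<\epsilon(\Phi)$. Using $\int\lambda^2\,d\mu_\Phi<\infty$, a further Fourier‑uniqueness argument shows that $\Spann\{\Phi(t):0\le t<\epsilon(\Phi)\}$ is a core for $A_\Phi$; since this span lies in $\D$ and $A_\Phi$ agrees there with $H$, we get $A_\Phi\subseteq\oline H$, and as $A_\Phi$ is self‑adjoint this forces $\cH_\Phi$ to reduce $\oline H$ with $\oline H|_{\cH_\Phi}=A_\Phi$. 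Hence $e^{t\oline H}\Phi=e^{tA_\Phi}\Phi=\Phi(t)$, as claimed. The step I expect to be the main obstacle is the conversion of the ``$s+t$‑dependence'' into the positive measure $\mu_\Phi$: this rests on the Bernstein--Widder theorem together with careful control of the convergence of the exponential integrals at the endpoints $r=0$ and $r=2\epsilon(\Phi)$, and it is precisely the $C^1$‑regularity of the solution curves that secures the finiteness $\int\lambda^2\,d\mu_\Phi=\|H\Phi\|^2<\infty$ on which the core argument, and thus the identification of $A_\Phi$ with a part of $\oline H$, depends.
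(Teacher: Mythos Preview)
The paper does not give its own proof of this theorem; the statement is quoted from Fr\"ohlich's paper \cite{FRO} (Cor.~1.2 there), with only the remark that the separability hypothesis can be dropped if weak measurability is replaced by weak continuity. So there is no in-paper argument to compare against.

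That said, your outline is in fact a faithful sketch of Fr\"ohlich's original proof: the observation that $\langle\Phi(s),\Phi(t)\rangle$ depends only on $s+t$, the exponential convexity of $r\mapsto\|\Phi(r/2)\|^2$, the Bernstein--Widder (Laplace) representation producing the measure $\mu_\Phi$, the deficiency-index computation via the holomorphic extension of $t\mapsto\langle\Phi(t),\psi\rangle$ to a vertical strip, and the identification of $\Phi(t)$ with $e^{t\oline H}\Phi$ through the unitary model $L^2(\mu_\Phi)$ are precisely the ingredients of \cite{FRO}. One small caution: the hypothesis gives $\Phi$ differentiable with $\Phi'=H\Phi$, not $\Phi\in C^1$, so the step $F_\Phi''(r)\to\|H\Phi\|^2$ as $r\to 0$ is not immediate. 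This is harmless, however: the core and reduction arguments in your final paragraph already work with the span of $\{\Phi(t):0<t<\epsilon(\Phi)\}$, since $\int\lambda^2 e^{2t\lambda}\,d\mu_\Phi=F_\Phi''(2t)<\infty$ for $t$ in the open interval by differentiating the Laplace integral there.
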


We apply Fr\"ohlich's Theorem 
to our geometric operator $\cL_V$ from Proposition~\ref{prop:2.2}. The new 
feature is that we can even describe the closure of the essentially self-adjoint operator we obtain.

\begin{theo}[Geometric Fr\"ohlich Theorem]\mlabel{T:Froelichgeo}
Let $\M$ be a Banach manifold and $K$ be a smooth positive definite 
kernel. Then $\cH_K$ consists of smooth functions, and if 
$V$ is a $K$-symmetric vector
field on $\M$, then the Lie derivative $\cL_V$ 
defines an essentially self-adjoint 
operator $\cH_K^0 \to \cH_K$ whose closure 
$\cL_V^K$ coincides with $\cL_V|_{\D_V}$. 
Moreover, if the local flow $\Phi^V_t(m)$ of $m \in \M$ at time $t$ is defined, 
then 
\[ e^{t\cL_V^K} K_m = K_{\Phi^V_t(m)}.\] 
\end{theo}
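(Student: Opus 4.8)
The plan is to deduce the Geometric Fr\"ohlich Theorem by applying Fr\"ohlich's Theorem (Theorem~\ref{thm:2.4}) to the operator $\cL_V$ restricted to $\cHr$, using Proposition~\ref{prop:2.2} to supply the required differential equation with solutions inside the domain. First I would recall from Proposition~\ref{prop:2.2} that $\cHr \subeq \D_V$ and that $\cL_V|_{\cHr}$ is symmetric, and that for every $m \in \M$ the curve $t \mapsto K_{\phi_t^V m}$ lies in $\cHr$ for $0 \le t < \epsilon(m)$, is differentiable in $\cH_K$, and satisfies
\[
\dif{t} K_{\phi_t^V m} = \cL_V K_{\phi_t^V m},
\]
which is exactly \eqref{E:diff} applied at the point $\phi_t^V m$ (using the flow property $\phi_s^V \phi_t^V m = \phi_{s+t}^V m$). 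Since $K_m \to K_m$ as $t \to 0$, this verifies the hypotheses of Fr\"ohlich's Theorem for the dense domain $\D = \cHr$ on the vectors $\Phi = K_m$; to cover a general $\Phi \in \cHr$, which is a finite linear combination $\Phi = \sum_j c_j K_{m_j}$, I would take $\Phi(t) := \sum_j c_j K_{\phi_t^V m_j}$, defined for $t$ less than the minimum of the $\epsilon(m_j)$, which again solves $\Phi'(t) = \cL_V \Phi(t)$, stays in $\cHr$, and converges to $\Phi$. Fr\"ohlich's Theorem then gives that $\cL_V|_{\cHr}$ is essentially self-adjoint and that $\Phi(t) = e^{t\ov{\cL_V|_{\cHr}}}\Phi$; taking $\Phi = K_m$ yields the flow formula $e^{t\cL_V^K}K_m = K_{\phi_t^V m}$ immediately.

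It remains to identify the closure $\cL_V^K := \ov{\cL_V|_{\cHr}}$ with $\cL_V|_{\D_V}$. One inclusion is easy: for $\ph \in \D_V$ the symmetry relation \eqref{E:symm} says $\lan \ph, \cL_V K_m\ran = \lan \cL_V \ph, K_m\ran$ for all $m$, which means $\ph$ lies in the domain of the adjoint $(\cL_V|_{\cHr})^*$ with $(\cL_V|_{\cHr})^* \ph = \cL_V \ph$; since the operator is essentially self-adjoint, $(\cL_V|_{\cHr})^* = \cL_V^K$, so $\D_V \subeq \dom(\cL_V^K)$ and $\cL_V^K$ extends $\cL_V|_{\D_V}$. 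For the reverse inclusion $\dom(\cL_V^K) \subeq \D_V$, take $\ph \in \dom(\cL_V^K)$ and set $\psi := \cL_V^K \ph \in \cH_K$. Using self-adjointness and \eqref{E:diff2}, for every $m$ we compute
\[
\psi(m) = \lan \psi, K_m\ran = \lan \cL_V^K \ph, K_m\ran
= \lan \ph, \cL_V^K K_m\ran = \lan \ph, \cL_V K_m\ran = (\cL_V \ph)(m),
\]
so $\cL_V \ph = \psi \in \cH_K$, i.e. $\ph \in \D_V$ and $\cL_V|_{\D_V}\ph = \psi = \cL_V^K \ph$. Hence the two operators coincide.

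The main obstacle I expect is a bookkeeping subtlety rather than a deep one: ensuring that the differentiation identity $\dif{t}K_{\phi_t^V m} = \cL_V K_{\phi_t^V m}$ is valid for all $t$ in the flow interval, not merely at $t = 0$. This requires knowing that $\phi_t^V m$ remains in $\M$ (it does, being on the flow line) and invoking the smoothness of $m \mapsto K_m$ together with the flow's cocycle property; the point is that \eqref{E:diff} was proved only at $t=0$, and one must re-base at $\phi_t^V m$. A second point of care is that Fr\"ohlich's Theorem as quoted produces essential self-adjointness of $\cL_V|_{\cHr}$ and the exponential formula, but the coincidence of the closure with $\cL_V|_{\D_V}$ genuinely uses the reproducing-kernel structure through \eqref{E:diff2} and \eqref{E:symm} — this is precisely the ``new feature'' advertised before the statement, and the computation $\psi(m) = (\cL_V\ph)(m)$ above is where it enters. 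Everything else is a direct application of the tools already assembled.
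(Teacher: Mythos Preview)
Your proposal is correct and follows essentially the same route as the paper's proof: apply Fr\"ohlich's Theorem to $\cL_V|_{\cHr}$ using the curves $\Phi(t)=\sum_j c_j K_{\phi_t^V m_j}$, read off the flow formula from the conclusion $\Phi(t)=e^{t\overline H}\Phi$, and then identify the closure with $\cL_V|_{\D_V}$ via the two inclusions coming from \eqref{E:diff2} and \eqref{E:symm} exactly as you describe. Your cautionary remarks about re-basing \eqref{E:diff} at $\phi_t^V m$ and about the reproducing-kernel structure being what pins down the closure are apt but do not indicate any gap.
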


\begin{proof}
We apply Fr\"ohlich's Theorem with 
$\D=\cH_K^0$ and $H={\cL_V}|_{\D}$. For
$\Phi=\sum_{j=1}^k\ha_jK_{m_j}$, $\ha_j\in\bC$, $m_j\in M$, we define
$$\Phi(t)=\sum_{j=1}^k\ha_jK_{\phi_t^V m_j}, 
\quad \mbox{ for } \quad t<\min(\epsilon(m_1),\dots,\epsilon(m_k)).$$
Then \eqref{E:diff} implies $\dif{t}\Phi(t)=\cL_V\Phi(t)$, so the assumptions of Fr\"ohlich's Theorem are satisfied, and $\cL_V|_{\cH_K^0}$ is essentially self-adjoint. Let us denote by $T_V$ its closure. If $\ph$
is an element of its domain $\D(T_V)$, then 
\eqref{E:diff2} leads for every $m\in \M$ to 
\begin{align*}
(T_V\ph)(m)
&=\ps{T_V\ph,K_m}=\ps{\ph,\cL_VK_m}=(\cL_V\ph)(m).
\end{align*}
We conclude that $\D(T_V)\subseteq\D_V$ and $T_V=\cL_V|_{\D(T_V)}$.
The other inclusion 
follows from \eqref{E:symm} which implies 
that $\D_V\subseteq \D((\cL_V|_{\cH_K^0})^*)$. 
Indeed, since $\cL_V|_{\cH_K^0}$ is essentially self-adjoint we have
$(\cL_V|_{\cH_K^0})^*=T_V$.
\end{proof}

\begin{rem} Theorem~\ref{thm:2.4} 
above extends directly to the case where $\M$
is a manifold modeled on a locally convex space, 
provided the vector field $V$ is
assumed to have a local flow. Indeed \cite[Thm.~7.1]{Ne10b}
is stated in this generality. 
\end{rem}

\section{The L\"uscher--Mack Theorem}\label{sec:LM}

Let $(G,\theta)$ be a symmetric Banach--Lie group. 
We also write $\theta$ 
for the corresponding automorphism of its Lie algebra $\g$, which 
leads to the {\it symmetric Banach--Lie algebra} $(\fg,\theta)$.
We write 
\[ \fg=\fh\oplus \fq \quad \mbox{ with } 
\quad \fh = \ker(\theta - \1) \quad \mbox{ and } \quad 
\fq = \ker(\theta + \1),\]
for the eigenspace decomposition of $\fg$ under $\theta$. 
Let $\eset\not=W \subeq \fq$ be an open convex cone invariant under 
$e^{\ad \fh}$ and $S = S_H(W)$ be a corresponding 
Olshanski semigroup in the sense of Definition~\ref{def:olshsem}. 
In particular, $H$ is a connected Lie group with Lie algebra $\fh$, 
but we do not assume that $H$ is contained in $G$. We write 
\[ \Exp \: W \to S = S_H(W)\] 
for the corresponding exponential map on~$W$. 
In polar coordinates the involution on  $S$ is given by 
\begin{equation}
  \label{eq:invol}
(h \Exp x)^* = (\Exp x) h^{-1} = h^{-1}\Exp(\Ad(h)x)
\end{equation}
(Remark~\ref{rem:a.5}). 

\begin{exs} (a) Let $\cH$ be a real Hilbert space and 
$G = \GL(\cH)_0$. We write $A^\top$ for the adjoint of an element 
$A \in B(\cH)$. Then $\theta(g) = (g^\top)^{-1}$ defines an involution 
on $G$ with $H = G^\theta = \OO(\cH)_0$ (the orthogonal group of $\cH$) and 
$\fq = \Sym(\cH)$ is the space of symmetric operators. 
In this case 
$G = \OO(\cH)_0 \exp(\Sym(\cH))$ actually is an Olshanski (semi-)group 
for $W = \fq= \Sym(\cH)$. 
Similarly statements hold for complex and quaternionic 
Hilbert spaces. 

(b) If $H$ is a connected Lie group for which 
$\Ad(H)$ leaves a compatible norm on $\fh$ invariant, 
then $\fh$ is called an {\it elliptic Banach--Lie algebra}. 
Then $H$ has a universal complexification $\eta \: H \to H_\C$ 
with a polar decomposition $H_\C = H \exp(i\fh)$ 
(\cite[Ex.~6.9]{Ne02}). A finite dimensional Lie algebra 
$\fh$ is elliptic if and only if it is compact, but 
the class of elliptic Lie algebras is quite large. In particular, it 
contains 
the algebra $\fu(\cA)$ of skew-hermitian elements of a $C^*$-algebra
$\cA$ and in particular the Lie algebra $\fu(\cH)$ of the full unitary
group $\U(\cH)$ of a complex Hilbert space $\cH$. 

(c) If $V$ is a Banach space and $W \subeq V$ an open convex cone, then 
$S = S_V(W) = V + i W \subeq V_\C$ is a complex Olshanski semigroup 
 with respect to the involution $(x + iy)^* := -x + iy$.  

(d) If $\cA$ is a unital $C^*$-algebra, 
$G := \cA^\times_0$ (the identity component of its group of units) 
and $S := \{ s \in G \: \|s\| < 1\}$, then $S$ is a complex 
Olshanski semigroup $S= S_{\U(\cA)_0}(W)$, where 
\[ W = \{ x \in \cA \: x^* = x, \sup(\Spec(x)) < 0\}.\] 
An important example is the semigroup of invertible strict contractions 
of a complex Hilbert space $\cH$. 

(e) Let $\cA$ be a unital $C^*$-algebra and $\tau = \tau^2  = 
\tau^* \in \cA$. For $a,b \in \cA$, 
we write $a < b$ if there exists an invertible element $c \in \cA$ with $b - a = c^*c$. Then 
$$ S:= \{ s \in \cA^\times \: s^*\tau s < \tau \} $$
is an open subsemigroup of $\cA$ with respect to multiplication. 
To see that it is non-empty, we observe that we may write 
$\tau = \1 - 2 p = (\1 - p) - p$ 
for a projection $p = p^* = p^2 \in \cA$. For $\lambda \in \C^\times$ and 
$s := \lambda (\1-p) + \lambda^{-1}p$ we then have 
$$ s^* \tau s 
= |\lambda|^2 (\1 - p) - |\lambda^{-1}|^2 p < \tau = (\1-p) - p $$
if and only if $|\lambda| < 1$. The semigroup $S$ is of the 
form $S_H(W)$ for 
\[ H = \{ g \in \cA^\times \: g^*\tau g = \tau \}.\] 

(f) As already mentioned in the introduction, the 
compression semigroups of (real forms of) symmetric (Hilbert) domains 
are also Olshanski semigroups (cf.\ \cite{Ne01}).
\end{exs}

\begin{de} Let $(S,*)$ be an {\it involutive Banach semigroup}, i.e., 
an involutive semigroup carrying a Banach manifold structure such that 
multiplication and inversion are smooth maps. 

(a) A homomorphism $\rho \: S \to B(\cH)$ is called a 
{\it $*$-representation} if $\rho(s^*) = \rho(s)^*$ for every $s\in S$. 
Such a representation is said to be {\it non-degenerate} if 
$\rho(S)\cH$ spans a dense subspace of $\cH$, which is equivalent to the 
condition that $\rho(S)v=\{0\}$ implies $v = 0$. 

(b) For a representation $(\rho, \cH)$ of $S$, a vector $v \in \cH$ 
is called {\it smooth} if its orbit map 
$\rho^v \: S \to \cH, s \mapsto \rho(s)v$ is smooth. 
We write $\cH^\infty$ for the subspace of {\it smooth vectors} 
and say that $(\rho, \cH)$ is {\it smooth} if $\cH^\infty$ is dense 
in $\cH$. 

(c) A $*$-representation $(\rho, \cH)$ of $S$ is called {\it locally 
bounded} if every $s \in S$ has a neighborhood on which 
$\|\rho(\cdot)\|$ is bounded. 

(d) For a unitary representation $(\pi,\cH)$ of a Lie group $G$, we 
also write $\cH^\infty := \cH^\infty(\pi)$ for the subspace of 
smooth vectors. This subspace 
carries the {\it derived representation} 
\[ \dd\pi \: \g \to \End(\cH^\infty), \quad 
\dd\pi(x)v := \derat0 \pi(\exp tx)v \] 
of the Lie algebra $\g$ of $G$. 
If $\pi$ is smooth, then these operators are essentially 
skew-adjoint and, for $x \in \g$, the closure $\oline{\dd\pi}(x)$ 
is the infinitesimal generator of the unitary one-parameter group 
$\pi(\exp tx)$. 
\end{de}

\begin{rem} \mlabel{rem:locbo} 
(a) Since the arguments in \cite[Prop.~5.1, Lemma 5.2]{Ne10b} 
apply also to semigroup actions, the first countability 
of $S$ implies that 
any strongly continuous representation $\rho \: S \to B(\cH)$ 
defines a continuous action $S \times \cH \to \cH$. The continuity 
in the points $(s,0)$ implies that $\rho$ is locally bounded. 

(b) If, conversely, $\rho$ is locally bounded and the 
orbit maps $\rho^v \: S \to \cH$ are continuous for 
every $v$ in a dense subspace, then $\rho$ is strongly continuous 
(\cite[Lemma~IV.1.3]{Ne00}). 
\end{rem}

In the following we are interested in 
non-degenerate strongly continuous 
$*$-representations $\rho:S = S_H(W) \ra B(\cH)$ on a 
Hilbert space $\cH$. 
First we observe that, although $H$ need not be contained in $S$, 
any non-degenerate $*$-representation of $S$ defines in a natural 
fashion a unitary representation of $H$.

\begin{pro}\mlabel{P:repH} 
For every non-degenerate 
$*$-representation $(\rho, \cH)$ of 
$S$ there exists a unique, not necessarily continuous, 
unitary representation $\rho_H \: H \to \U(\cH)$ 
satisfying 
\begin{equation}\label{E:repH}
\rho(hs)=\rho_H(h)\rho(s)\quad \mbox{ for } \quad h\in H,  s\in S.
\end{equation}
Its space of continuous vectors contains $\rho(S)\cH$ 
and its space of smooth vectors contains $\rho(S)\cHy$. 
In particular, $(\rho_H, \cH)$ is strongly continuous, resp., smooth if 
$(\rho, \cH)$ is. 
\end{pro}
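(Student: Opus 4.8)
The plan is to define $\rho_H(h)$ by the prescribed formula directly on the dense subspace $\cD := \Spann\rho(S)\cH$ and then extend by continuity. For $h\in H$ I set
\[ \rho_H(h)\Big(\sum_j \rho(s_j)v_j\Big) := \sum_j \rho(h s_j)v_j, \]
where $hs_j$ denotes the left action of $H$ on $S$, which in the polar coordinates $S\cong H\times W$ is $(h,h'\Exp x)\mapsto (hh')\Exp x$. The key algebraic fact is the identity
\[ (hs')^*(hs) = (s')^*s \qquad (h\in H,\ s,s'\in S),\]
which follows from \eqref{eq:invol} together with the compatibility of the left $H$-action with the semigroup multiplication and with the involution (when $H\subseteq G$ this is simply a computation in $G$; in general one checks it in polar coordinates). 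Applying $\rho$ and using $\rho(s^*)=\rho(s)^*$, this yields
\[ \Big\langle \sum_j \rho(hs_j)v_j, \sum_k \rho(hs_k)v_k\Big\rangle = \sum_{j,k}\langle \rho((s_k)^*s_j)v_j,v_k\rangle = \Big\langle \sum_j\rho(s_j)v_j, \sum_k\rho(s_k)v_k\Big\rangle,\]
so the assignment is well defined and isometric on $\cD$; since $\rho$ is non-degenerate, $\cD$ is dense and $\rho_H(h)$ extends uniquely to an isometry of $\cH$.

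Next I would verify the representation properties on $\cD$ and transport them to $\cH$ by density: $\rho_H(e)=\1$ is clear, associativity of the action gives $\rho_H(h_1)\rho_H(h_2)\rho(s)v = \rho((h_1h_2)s)v = \rho_H(h_1h_2)\rho(s)v$, and hence $\rho_H(h)\rho_H(h^{-1})=\1=\rho_H(h^{-1})\rho_H(h)$, so each $\rho_H(h)$ is unitary and $\rho_H$ is a unitary representation of $H$ (a priori not strongly continuous). The relation $\rho(hs)=\rho_H(h)\rho(s)$ holds by construction, and uniqueness follows because any $\rho_H'$ satisfying it agrees with $\rho_H$ on $\rho(S)\cH$, hence on $\cH$.

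For the regularity assertions, fix $s\in S$ and $v\in\cH$ and write the $\rho_H$-orbit map of $\rho(s)v$ as the composition $H\to S\to\cH$, $h\mapsto hs\mapsto\rho(hs)v$. The first map is smooth because the $H$-action on $S$ is smooth (explicit in polar coordinates), and the second is the $\rho$-orbit map $\rho^v$, which is continuous when $\rho$ is strongly continuous and smooth when $v\in\cHy$. Thus $\rho(s)v$ is a continuous vector of $\rho_H$ in the first case and a smooth vector in the second, which gives the two stated containments. Since the continuous vectors of a unitary representation form a closed invariant subspace and $\rho(S)\cH$ is total by non-degeneracy, $\rho_H$ is strongly continuous whenever $\rho$ is; and since $\Spann\rho(S)\cHy$ is dense in $\cH$ — for arbitrary $v$ approximate by $v_n\in\cHy$ and use boundedness of $\rho(s)$ — the representation $\rho_H$ is smooth whenever $\rho$ is.

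The only genuinely delicate point is the first step: establishing $(hs')^*(hs)=(s')^*s$ and the smoothness of $h\mapsto hs$ in the generality where $H$ need not be contained in $G$. Both reduce to explicit formulas in the polar coordinates $S\cong H\times W$, where the left and right $H$-actions, the semigroup multiplication and the involution are all available; the rest of the argument is formal manipulation on $\rho(S)\cH$ followed by continuous extension.
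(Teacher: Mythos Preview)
Your argument is correct and is essentially an explicit unpacking of the paper's one-line citation: the paper invokes the fact that $H$ acts on $S$ by unitary multipliers together with \cite[Rem.~III.1.5]{Ne00}, which is precisely the general construction you carry out by hand (define $\rho_H(h)$ on $\Spann\rho(S)\cH$, use $(hs')^*(hs)=(s')^*s$ to get well-definedness and isometry, then extend). Your treatment of the regularity statements via the composition $h\mapsto hs\mapsto\rho(hs)v$ is exactly the paper's second sentence; so the two proofs coincide, with yours simply supplying the details behind the reference.
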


\begin{proof} The existence of a unique homomorphism 
$\rho_H \: H \to \U(\cH)$ satisfying \eqref{E:repH} 
follows from the fact 
that $H$ acts on $S$ by unitary multipliers 
(Remark~\ref{rem:a.5}, \cite[Rem.~III.1.5]{Ne00}). 
The smoothness of the action of $H$ on $S$ now implies 
that $\rho(s)v$ has a continuous (smooth) orbit map under $H$  
if $v$ has a continuous (smooth) orbit map under $S$. 
This completes the proof. 
\end{proof}

 To apply the Hille--Yosida Theorem to the symmetric one-parameter semigroups
$\rho_x(t) := \rho(\Exp tx)$ for $x \in W$, we need to know that  
\begin{equation}\label{E:stcont}
\lim_{t\ra0}\rho_x(t)v=v\ \text{for}\ v\in\cH.
\end{equation}
As the following lemma shows, this follows from the non-degeneracy 
of the representation and strong continuity. 

\begin{lem} \mlabel{lem:3.4}
If $\rho$ is a non-degenerate $*$-representation of $S = S_H(W)$ and 
$x \in W$, then $\rho(\Exp x)\cH$ is dense in $\cH$. Furthermore, 
\eqref{E:stcont} holds if $\rho_x$ is strongly continuous on $\bR_{>0}$.
\end{lem}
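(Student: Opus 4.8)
The plan is to exploit the involution identity \eqref{eq:invol} together with non-degeneracy to locate a dense range, and then to deduce \eqref{E:stcont} by a standard density-and-uniform-boundedness argument. First I would show $\rho(\Exp x)\cH$ is dense. Note that $(\Exp(x/2))^* \Exp(x/2) = \Exp x$ by \eqref{eq:invol} (with $h=e$), so $\rho(\Exp x) = \rho(\Exp(x/2))^*\rho(\Exp(x/2))$. Hence if $v \perp \rho(\Exp x)\cH$, then $0 = \lan v, \rho(\Exp(x/2))^*\rho(\Exp(x/2))w\ran = \|\rho(\Exp(x/2))v\|\cdot\|\rho(\Exp(x/2))w\|$-type estimate; more precisely, taking $w = v$ gives $\|\rho(\Exp(x/2))v\|^2 = 0$, so $\rho(\Exp(x/2))v = 0$. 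Iterating (or rather, applying this to $x/2^n$), we get $\rho(\Exp(x/2^n))v = 0$ for all $n$, and then for any $s \in S$ and $y \in W$ close to $0$ we can absorb $\Exp(x/2^n)$ into $s$ via $s = (s\,\Exp(-x/2^n))\Exp(x/2^n)$ once $s\,\Exp(-x/2^n) \in S$, which holds for $n$ large since $S$ is open; thus $\rho(s)v = \rho(s\,\Exp(-x/2^n))\rho(\Exp(x/2^n))v = 0$ for all $s$ in a neighborhood-swept subset, and since $\rho(S)v$ is then killed, non-degeneracy forces $v = 0$.

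For the second assertion, assume $\rho_x$ is strongly continuous on $\bR_{>0}$. By Remark~\ref{rem:locbo} (strong continuity $\Rightarrow$ local boundedness), $\|\rho_x(t)\|$ is bounded, say by $M$, for $t$ in a neighborhood of $0$ in $\bR_{>0}$, i.e. for $0 < t < \delta$. Now for $v$ in the dense subspace $\rho(\Exp x)\cH$, write $v = \rho(\Exp x)w$; then $\rho_x(t)v = \rho(\Exp(t+1\cdot)x)\dots$ — more carefully, $\rho_x(t)\rho(\Exp x)w = \rho(\Exp((1+t)x))w = \rho_x(1+t)w \to \rho_x(1)w = v$ as $t \to 0^+$ by strong continuity of $\rho_x$ at the point $1$. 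So $\lim_{t\to 0^+}\rho_x(t)v = v$ on a dense set, and uniform boundedness of $\{\rho_x(t) : 0 < t < \delta\}$ upgrades this to all of $\cH$ by the standard $3\varepsilon$-argument.

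The main obstacle is the first step: making precise how to ``absorb'' a small factor $\Exp(x/2^n)$ into an arbitrary $s \in S$ so as to sweep out all of $\rho(S)v$. This requires knowing that right translation by $\Exp(-x/2^n)$ maps a given $s$ back into $S$ for $n$ large, which follows from $S$ being open in $G$ and $\Exp(x/2^n) \to e$, but one must also ensure the product decomposition $s = (s\,\Exp(-x/2^n))\cdot\Exp(x/2^n)$ is genuinely a product in $S$ — i.e. that $s\,\Exp(-x/2^n)$ lies in $S$, not merely in the ambient group $G$. Since $\Exp x$ does lie in $S$ and $S$ is a neighborhood of $\Exp x$ in $\overline{S}$ is not automatic; instead one uses that $S = H\Exp(W)$ with $W$ open, so writing $s = h\Exp y$ with $y \in W$, the element $s\,\Exp(-x/2^n)$ is close to $h\Exp(y)$ and, since multiplication is continuous and $W$ is open, $s\,\Exp(-x/2^n) \in H\Exp(W) = S$ for $n$ large. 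Once this bookkeeping is handled, the rest is routine.
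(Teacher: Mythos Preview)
Your approach is essentially the same as the paper's: first show $\rho(\Exp x)v=0$ for $v\perp\rho(\Exp x)\cH$, iterate to get $\rho(\Exp(x/2^n))v=0$, then ``absorb'' a small $\Exp$-factor into an arbitrary $s\in S$ to conclude $\rho(S)v=0$; the second part is the standard density-plus-local-boundedness argument (the paper simply cites \cite[Lemma~VI.2.2]{Ne00}).

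The one point that needs care is the absorption step. In the general Olshanski setting of Definition~\ref{def:olshsem}, the semigroup $S=S_H(W)$ is a quotient of a covering of $S_{H_G}(W)$ and need \emph{not} sit inside any ambient group, so the expression ``$s\,\Exp(-x/2^n)$'' has no a~priori meaning: $\Exp$ is only defined on $W$, and $-x/2^n\notin W$. Your proposed fix via polar coordinates still invokes this undefined object. The paper's device is intrinsic: the left-invariant vector field $V_x$ on $S$ has a local flow (Remark~\ref{rem:a.9}), so for each $s$ there exists $t>0$ with $s_0:=s\exp(-tx)\in S$ and $s=s_0\exp(tx)=s_0\cdot\Exp(tx)$ as a genuine product in $S$; since $\rho(\Exp(tx))v=0$ (your $2^{-n}$-argument together with $\rho(\Exp(tx))=\rho(\Exp((t-2^{-n})x))\rho(\Exp(2^{-n}x))$ for $2^{-n}<t$ gives this for every $t>0$), the conclusion follows. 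In the special case $S\subseteq G$ your argument is fine as written; for the general case, replacing ``$s\,\Exp(-x/2^n)$'' by ``the time-$(-1/2^n)$ flow of $V_x$ through $s$'' is all that is needed.
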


\begin{proof} Let $v\in (\rho(\Exp x)\cH)^\bot$. 
In view of 
\[ \ps{\rho(\Exp x)w,v}=\ps{w,\rho(\Exp x)v},\]  
this is equivalent to $\rho(\Exp x)v=0$. 
Now \cite[Cor.~II.4.15]{Ne00} 
implies that 
\[ \rho\Big(\exp \frac{x}{n}\Big)v=0 \quad \mbox{ for every } \quad n > 0\] 
 and hence that $\rho(\Exp tx)v =0$ for every $t > 0$. 

Any $s\in S$ can be written $s=s_0\exp(tx)$
with $s_0\in S$ and some  $t>0$ 
because the left invariant vector field $V_x$ generates a local 
flow on $S$ (Remark~\ref{rem:a.9}). It follows that 
$\rho(s)v = \rho(s_0) \rho(\Exp tx)v = 0$. 
Since $\rho$ is a non-degenerate representation of $S$, it follows 
that $v = 0$, and hence that $\rho(\Exp x)\cH$ is dense in $\cH$. 

We see in particular, that the representation 
$\rho_x$ of $\bR_{> 0}$ is non-degenerate. 
Assume that $\rho_x$ is strongly continuous, hence locally 
bounded (Remark~\ref{rem:locbo}). Then 
Lemma~\cite[Lemma~VI.2.2]{Ne00} implies that 
$\rho_x$ extends uniquely to a strongly continuous representation 
on $\bR_{\geq 0}$, which implies \eqref{E:stcont}. 
\end{proof}

\begin{de} Let $(\rho, \cH)$ be a non-degenerate 
strongly continuous $*$-rep\-re\-sen\-ta\-tion of $S$. 
In view of Lemma~\ref{lem:3.4}, we obtain for 
every $x \in W$ a self-adjoint operator 
$$\ov{\dro}(x)\xi:=\df{t}\rho(\Exp{tx})\xi, $$
the generator of the symmetric one-parameter semigroup $\rho_x$. 
It is defined on the subspace $\D(\odro(x))$ where the derivative exists 
(Hille--Yosida Theorem). For $x \in \fh$, we likewise write 
$\odro(x)$ for the generator of the 
corresponding strongly continuous 
unitary one-parameter group $\rho_x(t) := \rho_H(\exp tx)$ 
(Proposition~\ref{P:repH}; Stone's Theorem). 
\end{de}

The following theorem is our main result. 

\begin{theo} \mlabel{thm:3.6} 
Let $G_c$ be a simply connected Lie group with Lie algebra 
$$\fgc=\fh\oplus i\fq \subeq \g_\bC, $$
 and $\rho:S = S_H(W) \ra B(\cH)$ be a non-degenerate strongly continuous smooth $*$-representation. Then there exists a unique 
smooth unitary
representation $(\pi, \cH)$ of $G_c$ on $\cH$ whose space of smooth vectors is
contained in $\D(\odro(x))$ for every $x\in\fh\cup W$, and whose derived
representation satisfies
\begin{equation}\label{E:inflm}
\dpi(x+iy)\subeq 
\odro(x)+i\,\odro(y)\ \text{for}\ x\in\fh\ \text{and}\ y\in W.
\end{equation}
\end{theo}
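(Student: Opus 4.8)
The plan is to build a representation of the Banach--Lie algebra $\fgc$ by skew-symmetric operators on a dense common core, to show that it integrates to $G_c$ by the integrability machinery of \cite{Mer10}, and then to read off \eqref{E:inflm} and uniqueness from the maximality of skew-adjoint operators. Let $\cHy=\cHy(\rho)$ be the dense space of smooth vectors of $\rho$ and put $\cD:=\Spann\,\rho(S)\cHy$. Then $\cD$ is dense (if $w\perp\cD$, then $\rho(s^*)w\perp\cHy$, so $\rho(s^*)w=0$ for all $s\in S$, so $w=0$ by non-degeneracy), invariant under $\rho(S)$, under $\rho_H(H)$ (Proposition~\ref{P:repH}), and under the hermitian semigroups $t\mapsto\rho(\Exp ty)$, $t>0$, $y\in W$. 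Since for $y\in W$, $s\in S$ the curve $t\mapsto\Exp(ty)\,s$ extends smoothly to $t=0$ with value $s$, smoothness of the orbit maps gives $\rho(s)v\in\D(\odro(y))$ with $\odro(y)\rho(s)v=\df{t}\rho(\Exp(ty)s)v$, and likewise $\rho(s)v\in\D(\odro(x))$ for $x\in\fh$; extending $\odro$ to all of $\fq=W-W$ on $\cD$ by $\odro(y_1-y_2):=\odro(y_1)-\odro(y_2)$ (which is additive on $W$, so well defined) we obtain operators $\odro(a)$, $a\in\fg$, all defined on $\cD$.

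Define $\beta\:\fgc\to\End_\C(\cH)$ on $\cD$ by $\beta(x+iy):=\odro(x)+i\odro(y)$ for $x\in\fh$, $y\in\fq$. Each $\beta(\xi)$ is skew-symmetric on $\cD$ ($\odro(x)$, $x\in\fh$, is skew-symmetric by Stone; $\odro(y)$, $y\in W$, is symmetric by Hille--Yosida), and $\xi\mapsto\beta(\xi)v$ is $\C$-linear and bounded for $v\in\cD$. The essential point is that $\beta$ is a homomorphism of Lie algebras, which amounts to
\[ [\odro(a),\odro(b)]=\odro([a,b]) \quad \text{on } \cD \qquad (a,b\in\fg). \]
For $a,b\in\fh$ this is the standard commutator relation for the smooth unitary representation $\rho_H$ of $H$. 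For $a\in\fh$, $b\in W$ it follows by differentiating the covariance $\rho_H(h)\rho(\Exp tb)\rho_H(h)^{-1}=\rho(\Exp(t\Ad(h)b))$ --- which holds because $h\Exp(tb)h^{-1}=\Exp(t\Ad(h)b)$ in $S$ and $W$ is $\Ad(H)$-invariant --- first in $t$ and then along $\fh$; the general case reduces to these and to the next one by bilinearity after decomposing in $W-W$. The remaining case $a,b\in W$ is where the geometry of the preceding section enters. Decompose $\cH=\bigoplus_\alpha\oline{\rho(S)w_\alpha}$ orthogonally into cyclic pieces with $w_\alpha\in\cHy$ (possible since, as $S^*=S$, the orthogonal projections onto $\rho(S)$-invariant closed subspaces commute with $\rho(S)$ and preserve $\cHy$). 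Realize each summand $\oline{\rho(S)w_\alpha}$ as a reproducing kernel space $\cH_{K_\alpha}\subeq C^\infty(S)$ via the total map $S\to\oline{\rho(S)w_\alpha}$, $s\mapsto\rho(s)^*w_\alpha$; under this identification $\rho$ acts by right translations, the part of $\odro(a)$ on $\oline{\rho(S)w_\alpha}$ becomes the Lie derivative $\cL_{V_a}$ along the left-invariant vector field $V_a$ on $S$ attached to $a$, and $V_a$ is $K_\alpha$-symmetric for $a\in W$. By the Geometric Fr\"ohlich Theorem~\ref{T:Froelichgeo}, $\cL_{V_a}|_{\cH_{K_\alpha}^0}$ is essentially self-adjoint with closure the part of $\odro(a)$, and $\cL_{V_a}K_m=\df{t}K_{\phi_t^{V_a}m}$ lies in that domain. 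Since $a\mapsto V_a$ is a homomorphism of Lie algebras into vector fields on $S$ and $[\cL_{V_a},\cL_{V_b}]=\cL_{[V_a,V_b]}$ on smooth functions, one checks --- using smoothness of $m\mapsto K_m$ and closedness of the operators $\odro(a)$ --- that the iterated operators make sense and that $[\odro(a),\odro(b)]K_m=\cL_{[V_a,V_b]}K_m=\cL_{V_{[a,b]}}K_m=\odro([a,b])K_m$ on $\cH_{K_\alpha}^0=\Spann\rho(S)w_\alpha$, hence on $\cD$. The same decomposition shows $\cD$ is a core for every $\odro(x)$, $x\in\fh\cup W$.

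Now apply the integrability criterion of \cite{Mer10} to $\beta$: it is a continuous homomorphism of $\fgc$ into skew-symmetric operators on the common core $\cD$; the operators $\odro(x)$ ($x\in\fh$) and $i\odro(y)$ ($y\in W$) are skew-adjoint and run over a subset spanning $\fgc$ (as $\Spann_\R(\fh\cup iW)=\fh\oplus i\fq$); and $\cD$ is invariant under the unitary groups $\rho_H(\exp tx)$ and the hermitian semigroups $\rho(\Exp ty)$ that these generators produce --- the passage from the latter semigroups to unitary one-parameter groups of $G_c$ being the Banach analogue of the Hille--Yosida$\leftrightarrow$Stone duality recalled in the introduction. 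This yields a smooth unitary representation $(\pi,\cH)$ of $G_c$ with $\cD\subeq\cHy(\pi)$ (so $\cHy(\pi)$ is dense) and $\dpi(\xi)$ restricting to $\beta(\xi)$ on $\cD$. For $x\in\fh$ the closure of $\dpi(x)$ is skew-adjoint (smoothness of $\pi$), generates $\pi(\exp tx)$, and extends $\odro(x)|_\cD$, whose closure is $\odro(x)$ since $\cD$ is a core; by maximality of skew-adjoint operators it equals $\odro(x)$, and likewise the closure of $\dpi(iy)$ equals $i\odro(y)$ for $y\in W$. Consequently $\cHy(\pi)\subeq\D(\odro(x))$ for $x\in\fh\cup W$, and for $x\in\fh$, $y\in W$, $v\in\cHy(\pi)$,
\[ \dpi(x+iy)v=\dpi(x)v+\dpi(iy)v=\odro(x)v+i\,\odro(y)v, \]
establishing \eqref{E:inflm}. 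The same argument gives uniqueness: any $(\pi',\cH)$ with the stated properties has the closure of $\dpi'(x)$ contained in, hence by maximality equal to, $\odro(x)$ for $x\in\fh$, and the closure of $\dpi'(iy)$ equal to $i\odro(y)$ for $y\in W$, so $\pi'$ and $\pi$ agree on the one-parameter subgroups attached to $\fh$ and to $iW$, which together generate $G_c$.

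I expect the main obstacle to be establishing that $\beta$ is a Lie algebra homomorphism in the direction $[W,W]\subeq\fh$: there is no group-theoretic covariance available there, so one must control the unbounded self-adjoint generators $\odro(y)$, $y\in W$, which is precisely the purpose of the reproducing-kernel realization and the Geometric Fr\"ohlich Theorem of the preceding section --- together with verifying the hypotheses of the integrability theorem of \cite{Mer10}, which in the (non-locally compact) Banach setting replaces the finite-dimensional tools, such as local compactness and analytic-vector arguments, used in the original L\"uscher--Mack proof.
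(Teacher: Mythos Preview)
Your overall architecture---build a skew-symmetric Lie algebra representation on a dense common core, invoke the integrability theorem of \cite{Mer10}, then deduce \eqref{E:inflm} by maximality---matches the paper's. The cyclic decomposition and reproducing kernel realization are also exactly what the paper does. But there is a genuine gap in the step where you ``apply the integrability criterion of \cite{Mer10}.''

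Theorem~\ref{T:intcrit} requires, for every $x\in\fa_1\cup\fa_2=\fh\cup i\fq$, that $\alpha(x)$ be essentially skew-adjoint, that $e^{\overline{\alpha(x)}}\cD\subeq\cD$, and that $e^{\overline{\alpha(x)}}\alpha(y)e^{-\overline{\alpha(x)}}=\alpha(e^{\ad x}y)$. For $x\in\fh$ this is your $\rho_H$-invariance. For $x=iy\in i\fq$, however, $e^{\overline{\alpha(iy)}}=e^{i\odro(y)}$ is the \emph{unitary} one-parameter group generated by $i\odro(y)$, not the hermitian semigroup $t\mapsto\rho(\Exp ty)=e^{t\odro(y)}$. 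You check invariance of $\cD$ only under the latter and then write that ``the passage from the latter semigroups to unitary one-parameter groups \ldots\ [is] the Hille--Yosida$\leftrightarrow$Stone duality''. That duality gives you existence of the unitary group, but it says nothing about why $e^{i\odro(y)}$ preserves $\cD$ or satisfies the conjugation identity. This is precisely the content of Proposition~\ref{prop:4.9} in the paper, and it is not short: for $y\in W$ one analytically continues the relation $\alpha(z)e^{t\odro(y)}=e^{t\odro(y)}\alpha(e^{-t\ad y}z)$ from $t\in\R_{\geq 0}$ to the closed right half-plane and then to $t=\pm i$ (Step~2), and for general $y\in\fq=W-W$ one combines this with Trotter's product formula (Step~3). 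Relatedly, the essential skew-adjointness of $\alpha(iy)$ for \emph{all} $y\in\fq$ (not just $y\in W$) must be established; the paper gets this from the Geometric Fr\"ohlich Theorem (Proposition~\ref{P:sa}), whereas you invoke Fr\"ohlich only for the bracket relation.

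So the obstacle you flag---the $[W,W]$ bracket---is in fact the easier part once one is in the RKHS model, since it reduces to $[\cL_{V_a},\cL_{V_b}]=\cL_{V_{[a,b]}}$ on smooth functions. The hard analytic work you are missing is the passage from the hermitian semigroups $e^{t\odro(y)}$ to the unitary groups $e^{it\odro(y)}$ together with their domain invariance and covariance, and this cannot be absorbed into a one-line appeal to \cite{Mer10}.
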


We will see (Remark~\ref{R:dom}) that the space $\cHy(\pi)$ 
of smooth vectors for the representation $\pi$ of $G_c$ 
coincides with
\begin{equation*}
\bigcap_{x_j\in\fh\cup W,\ n\in\bN}\D\big(\odro(x_n))\dots
\odro(x_1)\big).
\end{equation*} 
Accordingly, our strategy is to define a representation 
of the Lie algebra $\g_c$ on this space so that \eqref{E:inflm} is 
satisfies and then verify that 
we can use the results in \cite{Mer10} to show that 
this representation of $\g_c$ integrates to a representation of $G_c$.

\begin{rem} (a) Let $q_H \: \tilde H \to H$ be the simply connected
 covering group of $H$. Then we have a unique morphism 
$\iota \: \tilde H \to G_c$ integrating the inclusion 
map $\fh \to \g_c$. The relation $\pi \circ \iota = \rho_H \circ q_H$ 
now implies that 
$\iota(\ker q_H) \subeq \ker \pi$. 
As $\ker q_H$ acts trivially on $\g$, it also acts trivially on $\g_c$, 
i.e., $\iota(\ker q_H) \subeq Z(G_c)$. In particular, it is a normal 
subgroup and $\pi$ actually factors through a representation 
of the quotient $G_c/\iota(\ker q_H)$. 
If $\iota(\ker q_H)$ is discrete, this quotient is a Lie group 
with the same Lie algebra. 

Here is an example showing that the group 
$\iota(\ker q_H)$ need not be discrete. 
We consider the Lie algebra $\g = \fsl_2(\R) \oplus \R$ 
with the involution $\theta(x,t) = (-x^\top,t)$, which leads to 
$\g_c = \su_2(\C) \oplus \R$, and 
$G_c := \SU_2(\C) \times \R$ is a corresponding simply connected 
group. 

For the Lie algebra $\fh = \g^\theta = \so_2(\R) \oplus \R$, 
the corresponding simply connected group is 
$\tilde H = \R^2$, and the kernel for the adjoint action of 
$\tilde H$ on $\g$ is isomorphic to $\Z \times \R$ 
with $\ker \iota = 2\Z \times \{0\}$. Now 
$\Gamma := \Z (2,1) \oplus \Z(0,\sqrt 2)$ is a discrete subgroup 
of $\tilde H$ acting trivially on $\g$, so that 
$H := \tilde H/\Gamma$ satisfies our condition imposed for the 
construction of Olshanski semigroups and 
$\ker q_H = \Gamma$. Now $\iota(\Gamma) = \{\1\} \times 
(\Z + \sqrt 2\Z)$ is not discrete in $G_c$. 

(b) Let $q_S \: \tilde S = S_{\tilde H}(W)$ be the universal covering 
of the Olshanski semigroup $S = S_H(W)$ and 
$(\rho,\cH)$ be a smooth locally bounded $*$-representation of 
$S$, so that Theorem~\ref{thm:3.6} leads to a unitary representation 
$(\pi,\cH)$ of $G_c$. Clearly, the representation $\tilde\rho :=\rho \circ q_S$ 
of $\tilde S$ leads to the same representation of $G_c$. 

Now $\pi \circ \iota = \tilde\rho_H$ implies that 
$\ker(\tilde\rho_H) \supseteq \ker \iota$, so that the representation 
$\tilde\rho$ of $\tilde S$ actually factors through a 
representation of the semigroup 
$S_c := S_{H_c}(W)$, where $H_c = \iota(\tilde H)$. 
From the point of view of the representation theory of 
the group $G_c$, all representations obtained by 
Theorem~\ref{thm:3.6} can also be obtained from the semigroup 
$S_c$, which is locally isomorphic to $S$. 
\end{rem}

The representation $(\pi,\cH)$ of $G_c$ obtained from
the L\"uscher--Mack Theorem has the remarkable property that 
the spectrum of the operator $i \odpi(x)$ is bounded 
from below for every $x$ in the cone $iW \subeq \g_c$. 
In Section~\ref{S:holext} we will 
prove a converse to Theorem~\ref{thm:3.6}: A $-iW$-semibounded 
representation $\pi$ of $G_c$ comes from 
a strongly continuous smooth $*$-representation of 
$S_H(W)$, where $H = \langle \exp_{G_c} \fh \rangle$ is 
the identity component of the 
group of fixed points for the involution 
$\theta_c$ on $G_c$ acting on the Lie algebra by 
$x + iy \mapsto x -iy$ for $x \in \fh, y \in \fq$.

\begin{rem} It is instructive to take a closer look at the special case 
$\g = \fh_\C$ with $\fq = i\fh$, i.e., where $\theta(z) = \oline z$ 
is complex conjugation with respect to the real form $\fh$. 
Then $S_H(W)$ is a complex Olshanski 
semigroup (cf.\ Definition~\ref{def:olshsem}(b)). 
Then the complexification of 
$\g$ can be realized by the embedding 
\[ \eta \: \g \to \g \oplus \g, \quad z \mapsto (z,\oline z),\] 
which leads to $\g_\C \cong \g \oplus \g$, a direct sum of two complex 
Lie algebras. In this picture the 
Lie algebra $\g_c = \fh + i\fq$ is given by 
\[ \{ (x,x) \: x \in \fh\} +  i\{ (y,-y) \: y \in i\fh\}   
= \{ (x,x) \: x \in \fh\} +  \{ (y,-y) \: y \in \fh\}   
= \fh \oplus \fh.\] 
We conclude that $G_c \cong \tilde H \times \tilde H$, 
where $\tilde H$ is the simply connected covering group of~$H$. 

If $\alpha \: \g \to \End(\cD)$ is a complex linear representation 
of $\g$ on the complex linear space $\cD$, then the complex linear 
extension to $\g_\C \cong \g \oplus \g$ is given by 
\[ \alpha_\C(z,w) = \shalf\big(\alpha(z + \oline w) 
+ i \alpha(i\oline w - iz)\big) = \alpha(z) \] 
because 
\[ (z,w) 
= \shalf \big((z+\oline w,\oline z + w) 
+ i (i\oline w-iz, \oline{i\oline w - iz})\big).\] 
Therefore the corresponding unitary representation 
$\pi_c$ of $G_c$ is given by 
$\pi_c(h_1, h_2) = \rho_H(h_1)$. 
\end{rem}

\section{Proof of the L\"uscher--Mack Theorem} \mlabel{sec:4}

The following lemma permits us to reduce the proof of Theorem~\ref{thm:3.6} 
to the case where $(\rho, \cH)$ is cyclic and generated by a smooth 
vector. 

\begin{lem} \mlabel{lem:3.7} 
A non-degenerate strongly continuous smooth 
$*$-representation is a direct sum
of cyclic representations with smooth cyclic vectors.
\end{lem}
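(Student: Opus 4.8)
The plan is to run a Zornification argument of the standard kind used to decompose unitary representations, but with the extra bookkeeping needed to make the cyclic vectors smooth. First I would note that the subspace $\cHy$ of smooth vectors is dense by hypothesis and is invariant under $\rho(S)$: if $v$ has a smooth orbit map $s \mapsto \rho(s)v$, then for any $s_0 \in S$ the orbit map of $\rho(s_0)v$ is $s \mapsto \rho(s s_0)v = \rho^v \circ \rho_{s_0}$, a composition of smooth maps, hence smooth; the same works with $\rho_H(h)$ on the left using Proposition~\ref{P:repH} and the smoothness of the $H$-action on $S$. So $\cHy$ is a dense $\rho(S)$- and $\rho_H(H)$-invariant subspace.

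Next, for a smooth vector $v \in \cHy$, let $\cH_v := \oline{\Spann \rho(S)v}$ be the cyclic subspace it generates. I would check that $\cH_v$ is a reducing subspace, i.e., that its orthogonal complement is also $\rho(S)$-invariant; this uses the $*$-structure: for $w \perp \cH_v$ and $s, t \in S$ we have $\ps{\rho(s)w, \rho(t)v} = \ps{w, \rho(s^*t)v} = 0$ since $\rho(s^*t)v \in \cH_v$, so $\rho(s)w \perp \cH_v$. Hence $\cH = \cH_v \oplus \cH_v^\perp$ as $S$-representations, and $v$ is a cyclic smooth vector for the subrepresentation on $\cH_v$. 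The restriction $(\rho|_{\cH_v}, \cH_v)$ is again non-degenerate (it contains $v = \lim \rho(\Exp \tfrac{x}{n})\,v'$ in its range for suitable approximations, or more simply: $\rho(S)\cH_v$ is dense in $\cH_v$ by construction) and strongly continuous, and it is smooth because $v$ itself is a smooth cyclic vector, so $\rho(S)v \subseteq \cHy(\rho|_{\cH_v})$ spans a dense subspace.

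Now I would run Zorn's Lemma on the set of families $(\cH_{v_i})_{i \in I}$ of mutually orthogonal cyclic subspaces, each generated by a smooth vector $v_i \in \cHy$, ordered by inclusion of index families. An upper bound for a chain is the union of the families (the orthogonality and the cyclic-smooth property are preserved), so a maximal family $(\cH_{v_i})_{i \in I}$ exists. Let $\cH_0 := \bigoplus_{i \in I} \cH_{v_i}$ and suppose $\cH_0 \neq \cH$. Then $\cH_0^\perp$ is a nonzero reducing subspace, and since $\cHy$ is dense, $\cHy \cap \cH_0^\perp \neq \{0\}$ — here I would use that the orthogonal projection onto the closed invariant subspace $\cH_0^\perp$ maps smooth vectors to smooth vectors, so there is a nonzero $w \in \cHy$ with $w \in \cH_0^\perp$. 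Then $\cH_w \subseteq \cH_0^\perp$ is a cyclic subspace generated by a smooth vector, orthogonal to all the $\cH_{v_i}$, contradicting maximality. Hence $\cH = \bigoplus_{i\in I} \cH_{v_i}$, which is the asserted decomposition.

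The one step that genuinely needs an argument rather than a routine check is the claim that the orthogonal projection $P_0$ onto a closed $\rho(S)$-invariant (and $\rho_H(H)$-invariant) subspace preserves smooth vectors, which I used twice (to see $(\rho|_{\cH_v},\cH_v)$ is smooth and to produce a smooth vector in $\cH_0^\perp$). The point is that $P_0$ commutes with $\rho(s)$ for $s \in S^* \cap S$ — or, more robustly, $P_0$ commutes with the unitary operators $\rho_H(h)$ and with the self-adjoint contractions $e^{-t\odro(x)} = \rho(\Exp tx)$ for $x \in W$, $t>0$; since every element of $S$ is a product $s_0 \Exp(tx)$ with $s_0 \in S$ (Remark~\ref{rem:a.9}, used already in Lemma~\ref{lem:3.4}), a density/approximation argument shows $P_0 \rho(s) = \rho(s) P_0$ for all $s \in S$, and then for $v \in \cHy$ the orbit map of $P_0 v$ is $s \mapsto P_0 \rho(s) v$, a composition of $\rho^v$ with the bounded (hence smooth) linear map $P_0$, therefore smooth. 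With that in hand, the rest of the argument is the standard Zorn decomposition.
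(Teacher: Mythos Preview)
Your argument is correct and follows essentially the same route as the paper: Zorn's Lemma on families of mutually orthogonal cyclic subspaces with smooth generators, then a contradiction by projecting a smooth vector onto the orthogonal complement. The one place you overcomplicate things is the final paragraph: you already showed, for $\cH_v$, that the $*$-structure makes the orthogonal complement of any $\rho(S)$-invariant closed subspace again $\rho(S)$-invariant, and the identical argument applies to $\cH_0$; hence $P_0\rho(s)=\rho(s)P_0$ for \emph{all} $s\in S$ directly, with no need for the detour through $S^*\cap S$, Remark~\ref{rem:a.9}, or approximation.
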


\begin{proof}
The set of all families $(\cH_j)_{j\in J}$ of mutually orthogonal
closed $S$-invariant subspaces which contain a smooth cyclic vector is well ordered by inclusion and by Zorn's Lemma it has a 
maximal element $(\cH_j)_{j\in J}$. Let $\K=\ov{\bigoplus_{j\in J}\cH_j}$. 
Then $\K$ and $\K^\perp$ are $S$-invariant, and we claim that if $\K^\perp$ is
non-zero, it contains a smooth vector. Indeed, if $\K\neq\cH$, there exists a smooth vector $w$
which is not in $\K$. Let us write $\mathrm{pr}$ for the orthogonal projection on 
$\K^\perp$ and $v:=\mathrm{pr}(w)$. Then the relation 
$\rho(s)v=\mathrm{pr}(\rho(s)w)$, $s\in S$,
shows that $v$ is a smooth vector in $\K^\perp$. 
Since $\rho$ is non-degenerate, 
we have $v\in\ov{\rho(S)v}$ 
(cf.\ \cite[Lemma~II.2.4]{Ne00}), hence $\ov{\Spann{\rho(S)v}}$ is a  closed 
$S$-invariant subspace with a smooth cyclic vector orthogonal to each 
$\cH_j$, $j\in J$. But this contradicts the maximality of $(\cH_j)_{j\in J}$. Therefore 
$\cH=\K$, and as representations of $S$, we have
$\cH\simeq\widehat{\bigoplus}_{j\in J}\cH_j$.
\end{proof}

From now on we assume that $v_0 \in \cH^\infty$ is a cyclic vector. 
Then we obtain a smooth positive definite kernel  
$$K(s_1,s_2):=K_{s_2}(s_1):=\ps{\rho(s_1s_2^*)v_0,v_0}$$
on $S$ which leads to a unitary map 
\[ \Psi \: \cH \to \cH_K, \quad \Psi(v)(s) 
:= \rho^{v,v_0}(s) 
= \lan \rho(s)v,v_0\ran = \lan v, \rho(s^*)v_0\ran \] 
onto the reproducing kernel space $\cH_K\subseteq C^\infty(S,\bC)$. 
Indeed, $\Psi$ 
intertwines $\rho$ with the representation 
of $S$ on $\cH_K\subseteq C^{\infty}(S,\bC)$ 
given by $(\rho(s_2)\ph)(s_1)=\ph(s_1s_2)$. 
Therefore it suffices to prove Theorem~\ref{thm:3.6} only for 
the representation $(\rho, \cH_K)$ of $S = S_H(W)$.

The function $\ph_0=\rho^{v_0,v_0} = \Psi(v_0)$ satisfies 
$\rho(s)\ph_0=K_{s^*}$, so that 
\begin{equation} \label{eq:8}
 \cHr=\spann(\rho(S)\ph_0) \subeq \cH_K^\infty 
\quad \mbox{ and } \quad 
\rho(s_2)K_{s_1}=K_{s_1s_2^*} 
\end{equation}
for $s_1 \in S, s_2 \in S \cup H$.
To each $x\in\fg$ we associate the corresponding left invariant 
vector field $V_x$ on $S$ (cf.\ Definition~\ref{D:tangiso}) 
and write the corresponding integral curves as $s\exp(tx)$ 
(cf.\ Remark~\ref{rem:a.9}). 
As in \eqref{eq:liedet}, we set 
$$\cL_x:=\cL_{V_x}\quad \mbox{ and } \quad 
\cL_x^K := \cL_x\res_{\cD_x} \quad \text{for}\ \D_x:=\D_{V_x} 
= \{ \phi \in \cH_K \: \cL_x\phi \in \cH_K\},$$
and extend this definition $\bC$-linearly to any $x\in\fg_\bC$. Then we have 
on $C^\infty(S,\bC)$ the relation 
\begin{equation}
  \label{eq:liederbrack}
[\cL_x,\cL_y]=\cL_{[x,y]}.
\end{equation}

We recall that a {\it core} of a self-adjoint operator 
$A \: \cD \to \cH$ is a dense subspace $\cD_0 \subeq \cD$ for which 
$A = \overline{A\vert_{\cD_0}}$. 

\begin{pro} \mlabel{prop:Lieder}
Let $x\in \fh\cup W$. Then $\cHr\subseteq\D(\odro(x))$, 
$\cHr$ is a core for $\odro(x)$, and 
\begin{equation}
  \label{eq:lieder-id}
\ov{\dro}(x)=\cL_x^K. 
\end{equation}
\end{pro}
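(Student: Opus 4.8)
The plan is to prove \eqref{eq:lieder-id} by combining the Geometric Fr\"ohlich Theorem~\ref{T:Froelichgeo} with the identification of the operators $\odro(x)$ from the Hille--Yosida/Stone picture, and then to extract the core statement as a byproduct. The starting observation is that, for $x \in \fh \cup W$, the left invariant vector field $V_x$ on $S$ generates a (local, and for $x \in \fh$ global) flow, namely $\Phi_t^{V_x}(s) = s \Exp(tx)$ when $x \in W$ (Remark~\ref{rem:a.9}) and $s\exp(tx)$ when $x \in \fh$. So Theorem~\ref{T:Froelichgeo} applies with $\M = S$, $V = V_x$: it tells us that $\cL_x$ restricted to $\cH_K^0$ is essentially self-adjoint, that its closure $\cL_x^K$ equals $\cL_x|_{\D_x}$, and that $e^{t\cL_x^K}K_s = K_{s\Exp(tx)}$ (resp.\ $K_{s\exp(tx)}$). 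In particular $\cH_K^0 \subeq \D(\cL_x^K) = \D_x$, which already gives the first assertion $\cH_K^0 \subeq \D(\odro(x))$ once we know $\cL_x^K = \odro(x)$, and $\cH_K^0$ is a core for $\cL_x^K$ by essential self-adjointness, giving the core statement.

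The heart of the argument is therefore \eqref{eq:lieder-id} itself, i.e.\ that the self-adjoint operator $\cL_x^K$ produced by the geometric Fr\"ohlich theorem is the \emph{same} self-adjoint operator as the generator $\odro(x)$ of the one-parameter (semi)group $t \mapsto \rho(\Exp tx)$ (resp.\ $\rho_H(\exp tx)$). For this I would compare the two one-parameter objects on the dense set $\cH_K^0$. Using the intertwining $\Psi$ and the formula $\rho(s_2)K_{s_1} = K_{s_1 s_2^*}$ from \eqref{eq:8} — valid for $s_2 \in S \cup H$ — and the involution formula \eqref{eq:invol}, one computes that the one-parameter semigroup $\rho(\Exp tx)$ acts on the generating vectors $K_s$ by $\rho(\Exp tx)K_s = K_{s(\Exp tx)^*} = K_{s\Exp(tx)}$ for $x \in W$ (here $(\Exp tx)^* = \Exp(tx)$ since $x \in \fq$), and similarly $\rho_H(\exp tx)K_s = K_{s\exp(-tx)^{-1}\cdot} $... more carefully, for $x \in \fh$ one has $\exp(tx)^* = \theta(\exp tx)^{-1} = \exp(-tx)^{-1} = \exp(tx)$ is false in general, so one must use the correct formula $\rho_H(h)K_s = K_{sh^*}$ with $h^* = \theta(h)^{-1} = h$ for $h \in H$; thus $\rho_H(\exp tx)K_s = K_{s\exp(tx)}$. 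In both cases this matches $e^{t\cL_x^K}K_s$ from Theorem~\ref{T:Froelichgeo}. Since both $e^{t\cL_x^K}$ and the unitary/hermitian one-parameter (semi)group $\rho_x(t)$ agree on the total set $\{K_s : s \in S\}$ and both are strongly continuous families of bounded operators, they coincide everywhere; by uniqueness of generators (Stone's theorem for $x \in \fh$, Hille--Yosida for $x \in W$), their generators coincide, which is precisely \eqref{eq:lieder-id}, and simultaneously shows $\D(\odro(x)) = \D_x \supseteq \cH_K^0$.

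The main obstacle I anticipate is purely bookkeeping with the involution and the direction of the flow: one must be careful that the left invariant vector field $V_x$ has flow $s \mapsto s\Exp(tx)$ acting on the \emph{right}, while $\rho$ is a representation so $\rho(\Exp tx)$ acts via right multiplication inside $\cH_K \subeq C^\infty(S)$ (recall $(\rho(s_2)\phi)(s_1) = \phi(s_1 s_2)$), and the kernel point $K_s$ transforms through $s^*$ — so the identities $(\Exp tx)^* = \Exp tx$ for $x \in \fq$ and $(\exp tx)^* = \exp tx$ for $x \in \fh$ (the latter because $\theta$ fixes $\fh$ pointwise) are exactly what makes the two flows match rather than become inverse to each other. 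Once these signs are pinned down the rest is immediate. A secondary point worth a sentence is that for $x \in W$ the relevant one-parameter semigroup is only defined for $t \geq 0$ and is hermitian (not unitary), so one invokes the Hille--Yosida half of the correspondence and Fr\"ohlich's theorem in its genuinely one-sided form, exactly as set up in Theorem~\ref{T:Froelichgeo} via the local flow; no two-sided flow is needed or available.
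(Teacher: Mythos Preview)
Your strategy --- produce $\cL_x^K$ via the Geometric Fr\"ohlich Theorem and then match $e^{t\cL_x^K}$ with $\rho(\Exp tx)$ (resp.\ $\rho_H(\exp tx)$) on the total set $\{K_s\}$ to conclude by uniqueness of generators --- is a legitimate alternative for $x\in W$. There $V_x$ is indeed $K$-symmetric (you omit this check, but it follows from $(\Exp tx)^*=\Exp tx$ for $x\in\fq$; the paper verifies exactly this in the proof of Proposition~\ref{P:sa}), so Theorem~\ref{T:Froelichgeo} gives $e^{t\cL_x^K}K_s=K_{s\Exp(tx)}$, which matches $\rho(\Exp tx)K_s=K_{s(\Exp tx)^*}=K_{s\Exp(tx)}$, and Hille--Yosida finishes. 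This is cleaner than the paper's route, which argues by two explicit inclusions: computing $(\odro(x)\ph)(s)=(\cL_x\ph)(s)$ gives $\odro(x)\subeq\cL_x^K$, and computing $\langle\cL_x\psi,K_s\rangle=\langle\psi,\odro(-\theta(x))K_s\rangle$ for $\psi\in\D_x$ gives $\cL_x^K\subeq(\odro(-\theta(x))|_{\cH_K^0})^*=\odro(x)$; the core property comes from $\rho(\exp tx)$-invariance of $\cH_K^0$ via \cite[Prop.~1.7]{ENGELNAGEL}.

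For $x\in\fh$, however, there is a genuine gap. Since $\theta$ fixes $H$ pointwise, $h^*=\theta(h)^{-1}=h^{-1}$ (not $h^*=h$ as you write), so $(\exp tx)^*=\exp(-tx)$ and hence $K(s_1\exp tx, s_2)=K(s_1, s_2\exp(-tx))$. Thus $V_x$ is $K$-\emph{anti}-symmetric for $x\in\fh$, Theorem~\ref{T:Froelichgeo} does not apply, and $\rho_H(\exp tx)K_s=K_{s\exp(-tx)}$ rather than $K_{s\exp(tx)}$ --- the flow comparison you set up goes the wrong way. This is consistent with $\odro(x)$ being skew-adjoint (Stone) rather than self-adjoint (Fr\"ohlich) for $x\in\fh$. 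Your approach can be salvaged here by abandoning Fr\"ohlich and arguing directly from the unitary group $\rho_H(\exp tx)$ and its action $(\rho_H(\exp tx)\ph)(s)=\ph(s\exp tx)$ on functions, but as written the $\fh$-case does not go through, and the involution error obscures the obstruction.
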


\begin{proof}  
For $x\in \fh$, the first assertion follows from 
\eqref{eq:8} and Proposition~\ref{P:repH}.
For $x\in W$ and $s\in S$, the map $t\mt 
\rho(\exp(tx)s)\ph_0$ extends to a smooth map on
some $0$-neighborhood, and for $t\geq 0$ it coincides with
$e^{t\odro(x)}\rho(s)\ph_0$.
Therefore $\rho(s)\ph_0$ is contained in the domain 
$\cD(\oline{\dd\rho}(x))$ of $\oline{\dd\rho}(x)$.

In the following we also write $\rho(h) := \rho_H(h)$ 
for $h \in H$.
For $x\in\fh\cup W$, the space $\cHr$ is invariant under 
$\rho(\exp tx)$, $t> 0$, hence is a core for $\odro(x)$ 
(\cite[Prop.~1.7]{ENGELNAGEL}), i.e., 
$\oline{\dd\rho}(x) = \oline{\oline{\dd\rho}(x)\res_{\cH_K^0}}$. 

Finally, 
let $x\in \fh\cup W$ and $\ph\in\D(\odro(x))$. Then
\begin{align*}
& \big(\oline{\dd\rho}(x)\ph\big)(s) 
%&=\df{t}\ps{\ph,\rho((\exp{tx})^*)K_s}=\df{t}\ps{\ph,K_{s\exp{tx}}}\\
= \ps{\df{t}\rho(\exp{tx})\ph,K_s}=\df{t}\ps{\ph,\rho((\exp{tx})^*)K_s}\\
&=\df{t}\ps{\ph,K_{s\exp{tx}}}=\df{t}\ph(s\exp{tx})=(\cL_x\ph)(s),
\end{align*}
shows that $\cL_x$ extends $\odro(x)$. Further, for $\psi \in \cD_x$, 
\begin{align*}
\lan \cL_x \psi, K_s \ran 
&= (\cL_x\psi)(s) = \derat0 \psi(s\exp(tx)) 
= \lan \psi, \derat0 K_{s\exp tx}\ran \\
&= \lan \psi, \derat0 \rho(\exp tx)^* K_{s}\ran 
= \lan \psi, \oline{\dd\rho}(-\theta(x))K_s\ran.
\end{align*}
%$$\ps{\psi,\cL_x K_s}=\ps{\cL_{-\theta(x)}\psi,K_s} \quad 
%\mbox{ for } \quad \psi\in\D_x,s \in S,$$
shows that $(\odro(-\theta(x))|_{\cHr})^*=\odro(x)$ extends $\cL_x$. 
This proves \eqref{eq:lieder-id}. 
\end{proof}

The preceding proposition shows that, 
for any $x\in\fh\cup W$, $\cL_x$ leaves the subspace 
\begin{align*}
\D
&:=\bigcap_{n\in \bN,\ x_n,\dots,x_1\in\fh\cup W}
\D(\ov{\dro}(x_n)\dots\ov{\dro}(x_1)) \\
&=\{ \ph \in \cH_K \: (\forall n \in \bN)(\forall x_n,\dots,x_1\in\fh\cup W)
\ \cL_{x_1} \cdots \cL_{x_n}\ph \in \cH_K \} 
\end{align*}
invariant, and by linearity this is also true for $x\in\fgC$. 
Therefore we set
$$\ha(x):=\cL_x|_\D \quad \mbox{ for } \quad x\in\fgC$$  
and observe that 
\begin{equation}\label{E:stab}
\cD = \{ \ph \in \cH_K \: (\forall n \in \bN)
(\forall x_1,\dots,x_n \in \g)\ \cL_{x_1} \cdots \cL_{x_n}\ph \in \cH_K \}.
\end{equation}

\begin{lem}\mlabel{L:Lieder} If $x_1,\dots x_n\in\fg$, 
$n\in\bN$, then
$$\cL_{x_n}\dots\cL_{x_1}\cHr\subseteq\cH_K, $$
and for $s \in S$, 
$$\cL_{x_n}\dots\cL_{x_1}\rho(s)\ph_0=\frac{\partial t^{n}}{\partial t_n\dots
\partial t_1}\Bigr|_{t_n=\dots=t_1=0}\rho(\exp(t_{1}x_1)\dots\exp(t_{n}x_n)s)
\ph_0.$$
\end{lem}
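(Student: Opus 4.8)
The plan is to prove the two claims of Lemma~\ref{L:Lieder} simultaneously by induction on $n$, treating the inclusion $\cL_{x_n}\cdots\cL_{x_1}\cHr \subseteq \cH_K$ and the explicit differentiation formula for $\cL_{x_n}\cdots\cL_{x_1}\rho(s)\ph_0$ as a single statement, since the formula is exactly what exhibits the left-hand side as an element of $\cH_K$ (a derivative in the complete space $\cH_K$ of a smooth $\cH_K$-valued curve). The base case $n=1$ is Proposition~\ref{prop:Lieder} together with \eqref{eq:8}: for $x \in \fh \cup W$ one has $\rho(s)\ph_0 = K_{s^*} \in \cHr$, and $\cL_x \rho(s)\ph_0 = \odro(x)\rho(s)\ph_0$, which by the argument in the proof of Proposition~\ref{prop:Lieder} equals $\derat0 \rho(\exp(tx)s)\ph_0$; for general $x \in \fg$ one writes $x$ as a linear combination of elements of $\fh$ and $W$ (possible since $W$ is an open cone and $\fh \oplus \fq$ spans $\fg$, noting $\fh \cup W$ spans $\fg$ over $\bR$) and uses $\bC$-linearity of $\cL_x$ in $x$.

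For the inductive step, I would assume the statement for $n-1$, so that in particular
\[ \psi := \cL_{x_{n-1}}\cdots\cL_{x_1}\rho(s)\ph_0
= \frac{\partial^{\,n-1}}{\partial t_{n-1}\cdots\partial t_1}\Bigr|_{0}
\rho(\exp(t_1 x_1)\cdots\exp(t_{n-1}x_{n-1})s)\ph_0 \in \cH_K. \]
The key point is to apply $\cL_{x_n}$ to $\psi$. Writing $s'(t_1,\dots,t_{n-1}) := \exp(t_1 x_1)\cdots\exp(t_{n-1}x_{n-1})s$, one wants to commute $\cL_{x_n}$ past the iterated $t$-derivatives. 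Since $\cL_{x_n}$ is (a restriction of) the Lie derivative along a left-invariant vector field, and left translations commute with $\cL_{x_n}$ on $C^\infty(S,\bC)$ — more precisely $\cL_{x_n}(\phi \circ \lambda_a) = (\cL_{x_n}\phi)\circ\lambda_a$ is false in general, but the correct identity is that $\cL_{x_n}$ acting on the orbit map $s \mapsto \rho(s)\ph_0 = K_{s^*}$ produces $\derat0 \rho(\exp(t x_n)s)\ph_0$ — I would instead argue at the level of the $\cH_K$-valued maps directly: the map $(t_1,\dots,t_{n-1}) \mapsto \rho(s'(t))\ph_0$ extends to a smooth $\cH_K$-valued map on a neighborhood of $0$ (this is where smoothness of $m \mapsto K_m$ and of the semigroup multiplication enter), and one may introduce an extra variable $t_n$ by $(t_n,t_1,\dots,t_{n-1}) \mapsto \rho(\exp(t_1 x_1)\cdots\exp(t_{n-1}x_{n-1})\exp(t_n x_n)s)\ph_0$; then $\cL_{x_n}$ applied to $\psi$, by Proposition~\ref{prop:2.2}/\eqref{E:diff} and the definition of $\cL_{x_n}$ as the generator of right-translation by $\exp(t x_n)$, amounts to taking $\partial/\partial t_n\big|_{0}$ of this $n$-variable map, and by equality of mixed partials one may reorder to obtain $\frac{\partial^{\,n}}{\partial t_n\cdots\partial t_1}\big|_{0}\rho(\exp(t_1 x_1)\cdots\exp(t_n x_n)s)\ph_0$. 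Once this is established for $x_j \in \fg$ (extending $\bC$-linearly in each $x_j$ afterwards), membership in $\cH_K$ is automatic because it is an iterated derivative, taken in the Banach space $\cH_K$, of a smooth $\cH_K$-valued function.

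The main obstacle I anticipate is justifying the interchange of $\cL_{x_n}$ with the $t$-derivatives of lower order, i.e. making rigorous the claim that the iterated application of $\cL_{x_j}$'s to $\rho(s)\ph_0$ really computes the mixed partial of the $n$-variable curve in $\cH_K$ rather than merely pointwise on $S$. The resolution is to work entirely with $\cH_K$-valued maps: from \eqref{E:diff} one knows $\cL_x K_m = \derat0 K_{\Phi_t^{V_x}m}$ converges in $\cH_K$, and for $m = s^*$ the flow of the left-invariant field $V_x$ gives $\Phi_t^{V_x}(s^*) = (\exp(tx)s)^*$ by \eqref{eq:invol} and Remark~\ref{rem:a.9} (here one must track how $V_x$ on $S$ interacts with the involution, which is why $\rho(s)\ph_0 = K_{s^*}$ is the convenient form); the smoothness result of \cite[Thm.~7.1]{Ne10b} guarantees that all these curves and their derivatives stay in $\cH_K$ and depend smoothly on the parameters, so that the formal reordering of partial derivatives is legitimate by the standard symmetry of higher derivatives of smooth maps between Banach spaces. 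Once $x_j \in \fg$ is handled, the $\bC$-linear extension to $x_j \in \fg_\bC$ is immediate from $\bC$-linearity of $x \mapsto \cL_x$, completing the induction.
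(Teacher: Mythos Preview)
Your proposal is correct in substance, but you have taken a detour that the paper avoids entirely. The induction, the reduction to $x\in\fh\cup W$ via Proposition~\ref{prop:Lieder}, and the worry about commuting $\cL_{x_n}$ with lower-order $t$-derivatives are all unnecessary: the very tool you invoke at the end to resolve your ``main obstacle''---smoothness of the $\cH_K$-valued map---already gives the full statement in one stroke.

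The paper's argument is simply this. Since $v_0$ is a smooth vector, $s'\mapsto\rho(s')\ph_0=K_{(s')^*}$ is a smooth map $S\to\cH_K$ (this is the content of \cite[Thm.~7.1]{Ne10b}), and by Remark~\ref{rem:a.9} the map $(t_1,\dots,t_n)\mapsto\exp(t_1x_1)\cdots\exp(t_nx_n)s$ is smooth on a neighbourhood of $0$ into $S$. Hence their composite is a smooth $\cH_K$-valued map, and the mixed partial $\frac{\partial^n}{\partial t_n\cdots\partial t_1}\big|_0$ exists \emph{in} $\cH_K$. To identify it with $\cL_{x_n}\cdots\cL_{x_1}\rho(s)\ph_0$, one evaluates at a point $s_0\in S$: evaluation is continuous linear on $\cH_K$, so it commutes with the $\cH_K$-derivative, and pointwise one has
\[
\big(\rho(\exp(t_1x_1)\cdots\exp(t_nx_n)s)\ph_0\big)(s_0)
=(\rho(s)\ph_0)\big(s_0\exp(t_1x_1)\cdots\exp(t_nx_n)\big),
\]
which after successive differentiation in $t_1,\dots,t_n$ is \emph{by definition} $(\cL_{x_n}\cdots\cL_{x_1}\rho(s)\ph_0)(s_0)$. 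No induction, no appeal to Proposition~\ref{prop:Lieder}, no decomposition $x=x_+-x_-+h$.

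What your inductive approach buys is nothing beyond this; in fact, once you grant the smoothness of the $n$-variable $\cH_K$-valued map (which you do in your final paragraph), the inductive scaffolding collapses. The lesson is that the identification of iterated Lie derivatives with mixed partials of an $\cH_K$-valued orbit map is a single global statement, best proved by one pointwise computation after establishing smoothness, rather than built up one derivative at a time.
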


\begin{proof} The map
$$(t_1,\dots,t_n,s)\mt \rho(\exp(t_{1}x_1)\dots\exp(t_{n}x_n)s)\ph_0,$$
defined for small enough $t_j$, is a smooth $\cH_K$-valued map 
(cf.\ Remark~\ref{rem:a.9}). Hence
\[ \frac{\partial t^{n}}{\partial t_n\dots\partial t_1}\Bigr|_{t_n=\dots=t_1=0}\rho(\exp(t_
{n}x_n)\dots\exp(t_{1}x_1)s)\ph_0\in\cH_K\]  and 
\begin{align*}
&\Big(\frac{\partial t^{n}}{\partial t_n\dots\partial t_1}\Bigr|
_{t_n=\dots=t_1=0}\rho(\exp(t_{n}x_n)\dots\exp(t_{1}x_1)s)\ph_0\Big)(s_0)\\
&=\frac{\partial t^{n}}{\partial t_n\dots\partial t_1}\Bigr|_{t_n=\dots=t_1=0}
(\rho(s)\ph_0)(s_0\exp(t_{n}x_n)\dots\exp(t_{1}x_1))\\
&=\big(\cL_{x_n}\dots\cL_{x_1}(\rho(s)\ph_0)\big)(s_0).\qedhere
\end{align*}
\end{proof}

\begin{pro} \mlabel{P:repgc} The domain $\D$ contains $\cHr$ 
and is dense in $\cH_K$.
The map $\ha:\fgc\ra \End(\D)$ is a strongly continuous representation
of $\fgc=\fh\oplus i\fq$ by skew-symmetric operators in the sense that 
all the maps $\g_c \to \cH_K, x \mapsto \alpha(x)v$ are continuous. 
More generally, for each 
$\ph \in \cD^1 := \bigcap_{x \in \g} \cD_x,$
the map $\g_\C \to \cH_K, x \mapsto \cL_x \ph$ is continuous. 
\end{pro}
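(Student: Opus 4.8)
The plan is to establish the four claims of Proposition~\ref{P:repgc} in turn: (i) $\cHr \subeq \cD$ and $\cD$ is dense; (ii) $\alpha$ is a Lie algebra homomorphism $\fgc \to \End(\cD)$; (iii) each $\alpha(x)$, $x \in \fgc$, is skew-symmetric; and (iv) the continuity statements. Claim (i) is immediate from what precedes: Lemma~\ref{L:Lieder} shows $\cL_{x_n}\cdots\cL_{x_1}\cHr \subeq \cH_K$ for all $x_j \in \fg$, which by the reformulation \eqref{E:stab} of $\cD$ says exactly $\cHr \subeq \cD$; and $\cHr = \spann(\rho(S)\ph_0)$ is dense in $\cH_K$ by construction, so $\cD$ is dense.

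For (ii), the relation $[\cL_x,\cL_y] = \cL_{[x,y]}$ holds on $C^\infty(S,\bC)$ by \eqref{eq:liederbrack}, and since $\cD$ is (by the very definition in \eqref{E:stab}) invariant under every $\cL_x$, $x \in \fgC$, the restriction $\alpha(x) = \cL_x|_\cD$ inherits $[\alpha(x),\alpha(y)] = \alpha([x,y])$ on $\cD$; $\bC$-linearity of $x \mapsto \cL_x$ is built in. So $\alpha$ restricted to the real form $\fgc \subeq \fgC$ is a homomorphism of real Lie algebras. For (iii): for $x \in \fh \cup W$ the operator $\cL_x|_\cD$ is the restriction of the self-adjoint operator $\odro(x)$ by Proposition~\ref{prop:Lieder}, hence symmetric, so $i\cL_x|_\cD$ is skew-symmetric; for $x \in i\fq$, write $x = iy$ with $y \in \fq$, and observe that $\fq$ is spanned by $W$ (since $W$ is an open cone in $\fq$, we have $\fq = W - W$), so by linearity it suffices to treat $y \in W$, where $\cL_y|_\cD = \odro(y)|_\cD$ is symmetric and therefore $\cL_{iy}|_\cD = i\odro(y)|_\cD$ is skew-symmetric. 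Thus every element of $\fgc = \fh + i\fq$ acts by a skew-symmetric operator on $\cD$.

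The continuity statement (iv) is where the real work lies, and I would prove the more general claim first: for $\ph \in \cD^1 = \bigcap_{x\in\fg}\cD_x$, the map $\fg_\bC \to \cH_K$, $x \mapsto \cL_x\ph$, is continuous; the asserted continuity of $x \mapsto \alpha(x)v$ on $\fgc$ for $v \in \cD \subeq \cD^1$ is then a special case, restricted to the closed real subspace $\fgc$. The key tool is formula \eqref{E:diff2}: $\lan \cL_x\ph, K_s\ran = (\cL_x\ph)(s) = \derat0 \ph(s\exp(tx))$, so we must control how this depends on $x$. Fix $\ph \in \cD^1$; I would use that $x \mapsto \cL_x\ph$ is $\bC$-linear, so it suffices to prove continuity at $x = 0$, i.e.\ boundedness of the linear map $\fg_\bC \to \cH_K$. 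For this I would invoke that $\cL_x\ph = \odro(-\theta(x))^*$-type expressions (as in the last display of the proof of Proposition~\ref{prop:Lieder}) together with a closed-graph / uniform-boundedness argument: if $x_k \to 0$ in $\fg$ and $\cL_{x_k}\ph \to \psi$ weakly in $\cH_K$, then for each fixed $s$, $\lan\psi,K_s\ran = \lim_k (\cL_{x_k}\ph)(s) = \lim_k \derat0 \ph(s\exp(tx_k))$, and the smoothness of the map $\fg \times S \to S$, $(x,s)\mapsto s\exp x$, together with the smoothness of $\ph$, forces this limit to be $0$; hence the graph of the linear map $x\mapsto \cL_x\ph$ is closed, and by the closed graph theorem it is bounded. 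The main obstacle is making this last step rigorous: one has to pass from the pointwise (in $s$) derivative estimate to an estimate of the $\cH_K$-norm of $\cL_x\ph$, which requires knowing that the $\cH_K$-valued map $x \mapsto (s \mapsto \derat0\ph(s\exp tx))$ genuinely lands in and depends continuously on $x$ into $\cH_K$ — this is exactly where one leans on the smoothness results of \cite[Thm.~7.1]{Ne10b} for the orbit maps and on the fact that $\cHr$ is a core, approximating $\ph$ by elements $\rho(s)\ph_0$ on which Lemma~\ref{L:Lieder} gives an explicit joint-smooth formula for $\cL_x(\rho(s)\ph_0)$ as a genuine partial derivative of an $\cH_K$-valued map, from which continuity in $x$ is manifest, and then extending to general $\ph \in \cD^1$ by the core property and a uniform-boundedness argument along the lines of \cite{Mer10}.
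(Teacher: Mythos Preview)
Your treatment of (i) and (ii) is correct and matches the paper. In (iii) there is a slip: for $x \in \fh$ the generator $\odro(x)$ is \emph{skew}-adjoint (Stone's Theorem applied to the unitary one-parameter group $\rho_H(\exp tx)$), not self-adjoint; so $\alpha(x) = \cL_x|_\cD$ is skew-symmetric directly, and your conclusion ``$i\cL_x|_\cD$ is skew-symmetric'' is the wrong one for $x \in \fh$. The argument for $i\fq$ via $\fq = W - W$ is fine. (The paper in fact leaves skew-symmetry implicit here, deferring the sharper essential skew-adjointness to Proposition~\ref{P:sa}.)

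For (iv), your closed-graph setup is exactly the paper's approach, but you then misidentify an obstacle that is not there. Once you know that for each fixed $s \in S$ the scalar $(\cL_x\ph)(s) = \lan \cL_x\ph, K_s\ran$ depends continuously on $x \in \fg$, the graph of the linear map $\fg \to \cH_K$, $x \mapsto \cL_x\ph$, is closed: if $x_k \to x$ and $\cL_{x_k}\ph \to \psi$ in $\cH_K$, then $\psi(s) = \lim_k (\cL_{x_k}\ph)(s) = (\cL_x\ph)(s)$ for every $s$, so $\psi = \cL_x\ph$. The Closed Graph Theorem then gives boundedness, and you are done; there is no need to ``pass from pointwise to $\cH_K$-norm'', no approximation by $\cHr$, no core argument, and no uniform boundedness. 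The paper makes the pointwise continuity transparent by writing
\[
(\cL_x\ph)(s) = d\ph(s)\,V_x(s),
\]
which is continuous in $x$ because $\ph \in \cH_K \subeq C^\infty(S,\bC)$ is smooth (so $d\ph(s)$ is a bounded linear functional on $T_s(S)$) and $\fg \to T_s(S)$, $x \mapsto V_x(s)$, is a topological isomorphism (Definition~\ref{D:tangiso}). The complex-linear extension to $\fg_\bC$ is then automatic, and restriction to $\fgc$ gives the asserted strong continuity of $\alpha$. Your detour through approximation and \cite{Mer10} is unnecessary here.
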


\begin{proof} The first assertion follows from \eqref{E:stab} and
Lemma~\ref{L:Lieder}.
The map $\ha$ is a Lie algebra homomorphism because $x\mt\cL_x$ is so 
by \eqref{eq:liederbrack}.
For the strong continuity it suffices to show that, for 
$\ph \in \cD^1$,  
the graph of the map $\g_c \to \cH_K, x \mapsto \cL_x \ph$ is closed 
(cf.\ \cite[Lemma 4.2]{Ne10a}). This follows if, for 
each $s \in S$ the map 
\[ \fgc\ra\cH,\ x\mt (\cL_x\ph)(s)= d\phi(s)V_x(s) \] 
is continuous. That this is the case on the real Lie algebra 
$\g$ follows from the fact that $\phi$ is a smooth function 
on $S$ because $\g \to T_s(S), x \mapsto V_x(s)$ is a topological isomorphism 
(cf.\ Definition~\ref{D:tangiso}). Clearly, the continuity 
is inherited by the complex linear extension to $\g_\C$.
This completes the proof. 
\end{proof}

To show that the representation of $\fgc$ integrates to a 
continuous unitary representation of $G_c$ we will use the following theorem
\begin{theo}[\cite{Mer10}]\mlabel{T:intcrit}
Let $G_c$ be a simply
connected Banach--Lie group with Lie algebra $\g_c$. Assume 
that $\g_c=\fa_1\oplus\fa_2$ where $\fa_1$ and $\fa_2$ are closed subspaces. 
Let $\ha$ be a 
strongly continuous representation of $\g_c$ on a dense subspace  $\D$ 
of a Hilbert space $\cH$ such that 
for every $x\in\fa_1\cup\fa_2$, $\ha(x)$ is essentially skew-adjoint, 
$e^{\ov{\ha(x)}}\D\subseteq\D$ and 
$$e^{\ov{\ha(x)}}\ha(y)e^{-\ov{\ha(x)}}=\ha(e^{\ad x}y)
\quad \mbox{ for } \quad y\in\g_c.$$
Then $\ha$ integrates to a continuous unitary representation 
$(\pi, \cH)$ of $G_c$ for which $\cD \subeq \cH^\infty(\pi)$ is a 
$G_c$-invariant subspace and 
$\alpha(x) = \dd\pi(x)|_{\cD}$ for $x \in \g_c$. In particular, 
for $x \in \g_c$, the infinitesimal generator $\oline{\dd\pi}(x)$ 
of the one-parameter group $\pi(\exp tx)$ coincides with 
the closure $\oline{\alpha(x)}$. 
\end{theo}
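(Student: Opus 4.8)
The plan is to build $\pi$ first as a continuous \emph{local homomorphism} on a neighborhood of the identity, assembled from the unitary one-parameter groups generated by the operators $\ha(x)$, and then to globalize it using the simple connectedness of $G_c$.

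First I would apply Stone's Theorem: for each $x \in \fa_1 \cup \fa_2$ the operator $\ha(x)$ is essentially skew-adjoint, so its closure $\oline{\ha(x)}$ generates a unitary one-parameter group $U_x(t) := e^{t\oline{\ha(x)}}$. Since $\ha$ is linear we have $\oline{\ha(tx)} = t\,\oline{\ha(x)}$ and $tx \in \fa_i$ whenever $x \in \fa_i$, so the hypothesis $e^{\oline{\ha(x)}}\cD \subseteq \cD$ upgrades to $U_x(t)\cD \subseteq \cD$ for all $t \in \R$, and Stone's Theorem yields $\frac{d}{dt}U_x(t)v = \ha(x)U_x(t)v$ for $v \in \cD$. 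Replacing $x$ by $tx$ in the intertwining hypothesis and passing to closures gives the integrated covariance relation
\[ U_x(t)\,\oline{\ha(y)}\,U_x(-t) = \oline{\ha(e^{t\ad x}y)} \quad \text{for } y \in \g_c,\ x \in \fa_1 \cup \fa_2, \]
from which it also follows that conjugating a one-parameter group $U_z(\cdot)$ by $U_x(1)$ produces the one-parameter group generated by $\oline{\ha(e^{\ad x}z)}$, even though $e^{\ad x}z$ need not lie in $\fa_1 \cup \fa_2$.

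Because $\g_c = \fa_1 \oplus \fa_2$ is a topological direct sum, the map $(x_1,x_2) \mapsto \exp(x_1)\exp(x_2)$ is a diffeomorphism from a neighborhood of $(0,0)$ in $\fa_1 \times \fa_2$ onto a neighborhood $\Omega$ of $e$ in $G_c$, and I would define $\pi(\exp(x_1)\exp(x_2)) := U_{x_1}(1)U_{x_2}(1)$. The heart of the argument is to show that this assignment is continuous and multiplicative on $\Omega$. Continuity in $(x_1,x_2)$ is obtained from the strong continuity of $\ha$ together with the core property of $\cD$, which control $\|U_x(t)v - v\|$ for $v \in \cD$ through the differential equation above. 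Multiplicativity is where the integrated covariance does all the work: a product of two elements of $\Omega$ is brought back into the normal form $\exp(\fa_1)\exp(\fa_2)$ by moving $\fa_1$-factors past $\fa_2$-factors, and on the operator side the same rearrangements are carried out using the covariance relation. Since the operators are unbounded, each such identity must be checked on the common invariant core $\cD$, where both sides solve the same linear differential equation with the same initial value, so that uniqueness forces equality on $\cD$ and density together with unitarity extend it to $\cH$. This local multiplicativity is the main obstacle.

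Once a continuous local homomorphism is available, the monodromy principle for the simply connected group $G_c$ extends it uniquely to a global homomorphism $\pi \: G_c \to \U(\cH)$. As $G_c$ is connected it is generated by $\exp(\fa_1 \cup \fa_2)$, so the invariance of $\cD$ under each $U_x$, $x \in \fa_1 \cup \fa_2$, makes $\cD$ a $G_c$-invariant subspace. The orbit maps $g \mapsto \pi(g)v$, $v \in \cD$, are smooth on $\Omega$ by the continuity and differentiability established above, hence smooth everywhere by translation, giving $\cD \subseteq \cH^\infty(\pi)$. Differentiating $\pi(\exp tx_i)v = U_{x_i}(t)v$ yields $\dpi(x_i)|_\cD = \ha(x_i)$ for $x_i \in \fa_i$, and linearity of $\dpi$ and $\ha$ promotes this to $\dpi(x)|_\cD = \ha(x)$ for all $x \in \g_c$. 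Finally, since $\cD$ is a core for each $\oline{\ha(x)}$ with $x \in \fa_1 \cup \fa_2$ and $\pi(\exp tx)$ is generated by $\oline{\dpi}(x)$, the closures $\oline{\dpi}(x)$ and $\oline{\ha(x)}$ coincide.
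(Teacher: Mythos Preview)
This theorem is not proved in the present paper; it is quoted from \cite{Mer10} and used as a black box, so there is no ``paper's own proof'' to compare against directly.

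Your outline has the right architecture (build a local homomorphism in second-kind coordinates, then globalize by simple connectedness), and the covariance and invariance statements you extract are correct. The gap is in the multiplicativity step, which you rightly flag as the crux but do not actually carry out. The phrase ``bring the product back into normal form by moving $\fa_1$-factors past $\fa_2$-factors'' does not work as stated: conjugating $U_{y_1}$ by $U_{x_2}$ gives the one-parameter group with generator $\oline{\ha(e^{\ad x_2}y_1)}$, and $e^{\ad x_2}y_1$ lies in neither $\fa_1$ nor $\fa_2$, so nothing has been simplified. In the group, $\exp(x_2)\exp(y_1)$ does not factor as $\exp(z_1)\exp(z_2)$ with $z_i$ given by any algebraic rearrangement; the $z_i$ come only implicitly from the chart. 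Your fallback remark that ``both sides solve the same linear differential equation'' is the right idea, but you never say in which parameter you differentiate, and making it precise requires exactly the missing analytic ingredient: differentiability of $t\mapsto e^{\oline{\ha(\gamma(t))}}v$ for a $C^1$ path $\gamma$ in $\fa_i$ and the formula for its derivative. This is what \cite{Mer10} develops (and what the present paper records, in a related form, as Proposition~\ref{P:derpath} in Appendix~\ref{B}). Without that lemma the multiplicativity argument is only a heuristic.

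A smaller point: your last paragraph establishes $\oline{\dpi}(x)=\oline{\ha(x)}$ only for $x\in\fa_1\cup\fa_2$, while the theorem claims it for all $x\in\g_c$. Once $\pi$ exists this does follow, because the $G_c$-invariant dense subspace $\cD\subeq\cH^\infty(\pi)$ is then a core for every $\oline{\dpi}(x)$; but you should say so explicitly.
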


The next two propositions ensure 
that the assumptions of the theorem are satisfied by the representation $\ha$ for 
$\fa_1 = \fh$ and $\fa_2 = i\fq$. The first one uses the Geometric
Fr\"ohlich Theorem~\ref{T:Froelichgeo}. 

\begin{pro}\mlabel{P:sa} For $x\in\fh\cup i\fq$, 
the operator $\ha(x)$ is essentially skew-adjoint with 
closure~$\cL_x^K$. 
\end{pro}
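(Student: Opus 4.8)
The plan is to treat the two types of elements separately, in each case exhibiting $\ha(x) = \cL_x|_\cD$ as the restriction to $\cD$ of a Lie derivative coming from an honest $K$-symmetric vector field on $S$, and then invoking the Geometric Fr\"ohlich Theorem~\ref{T:Froelichgeo}. First I would recall the identification $\cH_K \subeq C^\infty(S,\bC)$ and the fact that $\cD$ contains $\cHr = \spann(\rho(S)\ph_0)$ and is dense (Proposition~\ref{P:repgc}). The essential skew-adjointness of $\ha(x)$ will be deduced from the essential self-adjointness of a Lie derivative, after multiplying by $i$ in the case $x \in i\fq$.

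First consider $x \in \fh$. Here the left invariant vector field $V_x$ on $S$ has a global flow $s \mapsto s\exp(tx)$ (this uses that $H$ acts on $S$ by right multiplication, Remark~\ref{rem:a.9}), and by \eqref{eq:8} we have $\rho(\exp tx)K_{s} = K_{s\exp(-tx)^*} = K_{s\exp(tx)}$ since $\theta$ fixes $\fh$ — wait, more precisely one computes $\rho_H(\exp tx)^*K_s = K_{s\exp tx}$ from \eqref{eq:8}, so $\cL_x K_s = \derat0 K_{s\exp tx} = -\oline{\dd\rho}(x)K_s$ up to the usual sign, which is the integrated form of the statement that $V_x$ is $K$-symmetric (one checks $(\cL_x K_m)(n) = \oline{(\cL_x K_n)(m)}$ directly from $K(s_1,s_2) = \lan\rho(s_1)v_0,\rho(s_2)v_0\ran$ and $s^* = \theta(s)^{-1}$, using $\theta|_\fh = \id$). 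Then Theorem~\ref{T:Froelichgeo} applies to $iV_x$: its Lie derivative $i\cL_x$ is essentially self-adjoint on $\cHr \subeq \cD$ with closure $i\cL_x^K$, hence $\cL_x$ restricted to $\cHr$ — and therefore to the larger core-admitting subspace $\cD$, since $\cHr \subeq \cD \subeq \cD(\cL_x^K) = \D_x$ and $\cHr$ is already a core by Proposition~\ref{prop:Lieder} — is essentially skew-adjoint with closure $\cL_x^K$. Since $\cHr \subeq \cD$, essential skew-adjointness of $\ha(x) = \cL_x|_\cD$ follows from that of $\cL_x|_{\cHr}$.

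For $x \in i\fq$, write $x = iy$ with $y \in \fq$; I would first reduce to $y \in W$. Since $W$ is an open convex cone and $\fq = W - W$, and since $\ha$ is already known to be a linear map (Proposition~\ref{P:repgc}), one can write $iy = iy_1 - iy_2$ with $y_1, y_2 \in W$; but essential skew-adjointness is not additive, so instead I would argue directly for $y \in W$ and then note that $W$ spans $\fq$ and that the vector field $V_{iy}$ corresponds on $S$ to the real left invariant vector field $V_y$ with $y \in \fq$ — indeed $\cL_{iy} = i\cL_y$ on $C^\infty(S,\bC)$ by the $\bC$-linear extension, and $V_y$ has a local flow $s\exp(ty)$ on $S$. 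The $K$-symmetry of $V_y$ for $y \in \fq$ comes from $\theta(\exp ty) = \exp(-ty)$, so that $(\exp ty)^* = (\exp ty)$ in $S$ and hence $\rho(\exp ty)^* K_s = K_{s\exp ty}$, giving $\cL_y K_s = \derat0 K_{s\exp ty}$; the required identity $(\cL_y K_m)(n) = \oline{(\cL_y K_n)(m)}$ then follows, as in Proposition~\ref{prop:2.2}, from the self-adjointness relation $\rho(\exp ty)^* = \rho(\exp ty)$ together with $K(m,n) = \oline{K(n,m)}$. Now $V_y$ is $K$-symmetric and has a local flow, so Theorem~\ref{T:Froelichgeo} says $\cL_y$ is essentially self-adjoint on $\cHr$ with closure $\cL_y^K = \oline{\dd\rho}(y)$ (matching Proposition~\ref{prop:Lieder}); consequently $\cL_{iy} = i\cL_y$ is essentially skew-adjoint on $\cHr$, hence on $\cD$, with closure $i\cL_y^K = \cL_{iy}^K$.

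The main obstacle is the passage from a single cone element $y \in W$ to an arbitrary $y \in \fq$ for the $i\fq$ case: essential skew-adjointness does not behave well under taking linear combinations, so one genuinely needs the $K$-symmetry of $V_y$ for every $y \in \fq$ (not just $y \in W$), which is why I would verify the $K$-symmetry condition and the existence of a local flow of $V_y$ on $S$ directly for all $y \in \fq$ rather than trying to leverage the cone structure; the local flow exists because $\fq \ni y \mapsto V_y(s) \in T_s(S)$ hits all of a coordinate subspace and $S$ is a Banach manifold on which left invariant vector fields integrate locally (Remark~\ref{rem:a.9}). Everything else is a direct application of Theorem~\ref{T:Froelichgeo} together with the already-established facts that $\cHr$ is a core (Proposition~\ref{prop:Lieder}) and $\cHr \subeq \cD \subeq \D_x$.
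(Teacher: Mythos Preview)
Your approach for the $i\fq$ case is essentially identical to the paper's: verify that $V_y$ is $K$-symmetric for $y \in \fq$ (using $(\exp ty)^* = \exp ty$), apply the Geometric Fr\"ohlich Theorem~\ref{T:Froelichgeo} to get $\cL_y|_{\cHr}$ essentially self-adjoint with closure $\cL_y^K$, and then pass from $\cHr$ to $\cD$ via the sandwich $\cHr \subeq \cD \subeq \D_y$. The paper makes the inclusion $\cD \subeq \D_y$ explicit by writing $y = y_+ - y_-$ with $y_\pm \in W$ and using $\cD \subeq \D_{y_+} \cap \D_{y_-} \subeq \D_y$; you could equally cite \eqref{E:stab}.

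The $\fh$ case, however, contains an error. For $x \in \fh$ one has $(\exp tx)^* = \exp(-tx)$, so the kernel computation gives $K(s_1 \exp tx, s_2) = K(s_1, s_2 \exp(-tx))$ and hence $(\cL_x K_m)(n) = -\,\overline{(\cL_x K_n)(m)}$: the vector field $V_x$ is $K$-\emph{skew}-symmetric, not $K$-symmetric, and Theorem~\ref{T:Froelichgeo} does not apply to it (nor does ``$iV_x$'' make sense as a real vector field on $S$). The fix is simply to drop this detour: as you already note, Proposition~\ref{prop:Lieder} shows directly that $\cHr$ is a core for the skew-adjoint operator $\odro(x) = \cL_x^K$ when $x \in \fh$ (and likewise for $x \in iW$, where $\odro(y)$ is self-adjoint for $y \in W$), so $\cHr \subeq \cD \subeq \D_x$ makes $\cD$ a core as well. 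This is exactly how the paper handles $x \in \fh \cup iW$; Fr\"ohlich is invoked only for the remaining elements of~$\fq$.
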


\begin{proof}
For $x\in\fh\cup iW$, we know that $\cHr$ is a core for the 
skew-adjoint operator $\odro(x)=\cL_x^K$ (Proposition~\ref{prop:Lieder}).
 Since $\cHr\subseteq\D\subseteq\D_x$, the larger subspace 
$\D$ is also a core for $\cL_x^K$. This proves the assertion 
for $x \in \fh \cup iW$. 

Now let $x\in\fq$ be a general element. Then the vector field
$V_x$ is $K$-symmetric, since for every $s_1, s_2\in S$, we have
$$K(s_1\exp tx,s_2)=\ph_0(s_1(\exp tx)s_2^*)=\ph_0(s_1(s_2\exp tx)^*)=K(s_1,
s_2\exp tx).$$
We can therefore apply Theorem~\ref{T:Froelichgeo} and 
Proposition~\ref{prop:Lieder} to see that $\cL_x^K$ is
a self-adjoint operator, and that $\cL_x\res_{\cH_K^0}$ 
is essentially self-adjoint. 
Writing $x=x_+-x_-$ with $x_\pm\in W$, we see that we also have 
$\cL_x = \cL_{x_+} - \cL_{x_-}$, so that 
\[ \cH_K^0 \subeq \D\subseteq\D_{x_+}\cap\D_{x_-}\subseteq\D_x,\] 
and thus $\ha(ix)$ is essentially skew-adjoint 
with closure $\cL_{ix}^K$.
\end{proof}

\begin{lem} \mlabel{L:comrel} Let $x\in\fh\cup W$, $y\in\fg$, 
and $\ph\in\cH_K$ such that 
$\phi \in \cD_{e^{-\ad x}y}$. Then $\rho(\exp x)\ph\in\D_y$ and
$$\cL_y(\rho(\exp x)\ph)=\rho(\exp x)\cL_{e^{-\ad x}y}\ph.$$
\end{lem}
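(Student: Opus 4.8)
The plan is to realise $\rho(\exp x)$ as a genuine right translation operator on $\cH_K$ and then to commute it past the Lie derivative $\cL_y$, using that right translation transforms the left invariant vector field $V_y$ into $V_{e^{-\ad x}y}$. First I would check that, for $x\in\fh\cup W$, the bounded operator $\rho(\exp x)$ on $\cH_K$ is given by $(\rho(\exp x)\ph)(s)=\ph(s\exp x)$ for all $\ph\in\cH_K$, $s\in S$; here $\exp x=\Exp x\in S$ if $x\in W$, and if $x\in\fh$ then $\exp x\in H$ acts on $S$ by right multiplication. On the dense subspace $\cHr$ this is \eqref{eq:8} together with $(\rho(s_2)\ph)(s_1)=\ph(s_1s_2)$ (and, for $x\in\fh$, the identity $\rho_H(h)^{*}K_s=\rho_H(h^{-1})K_s=K_{sh}$, which follows from \eqref{eq:8} and $s^{*}=\theta(s)^{-1}$), and since both sides depend continuously on $\ph$ — the left because $\rho(\exp x)\in B(\cH_K)$ with continuous point evaluations, the right because $\ph(s\exp x)=\langle\ph,K_{s\exp x}\rangle$ with $K_{s\exp x}\in\cH_K$ — it extends to all of $\cH_K$. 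Thus $\psi:=\rho(\exp x)\ph=\ph\circ\rho_a$ with $a:=\exp x$ is a smooth function on $S$ (as $\cH_K\subseteq C^\infty(S,\bC)$), so its pointwise Lie derivative $\cL_y\psi\in C^\infty(S,\bC)$ is defined.

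The second step is a pointwise computation. Writing $z:=e^{-\ad x}y=\Ad(a)^{-1}y\in\fg$, the smooth map $\rho_a\:S\to S$, $s\mapsto sa$, satisfies $d\rho_a(s)V_y(s)=V_z(sa)$ for all $s\in S$ — the usual transformation rule for left invariant vector fields under right translation, which in exponential coordinates reads $s\exp(ty)a=sa\exp(t\,e^{-\ad x}y)$ (cf.\ Remark~\ref{rem:a.9} and the tangent identifications of Definition~\ref{D:tangiso}). By the chain rule, for every $s\in S$,
\[(\cL_y\psi)(s)=d\ph(sa)\big(d\rho_a(s)V_y(s)\big)=d\ph(sa)\,V_z(sa)=(\cL_z\ph)(sa)=\big(\rho(\exp x)(\cL_z\ph)\big)(s),\]
so $\cL_y(\rho(\exp x)\ph)=\rho(\exp x)\,\cL_{e^{-\ad x}y}\ph$ as functions on $S$. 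Since the hypothesis $\ph\in\cD_{e^{-\ad x}y}$ gives $\cL_{e^{-\ad x}y}\ph\in\cH_K$ and $\rho(\exp x)\in B(\cH_K)$, the right hand side lies in $\cH_K$; hence $\cL_y(\rho(\exp x)\ph)\in\cH_K$, which by the definition of $\D_y$ is precisely the asserted conclusion together with the claimed formula.

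I expect the only delicate point to be the first step — that $\rho(\exp x)$ acts as the literal right translation on all of $\cH_K$, not merely on $\cHr$ — together with the observation that, because $\exp x=\Exp x$ need not be invertible in $S$ for $x\in W$, one must argue via the pointwise Lie derivative and the smooth map $\rho_a$ rather than by pushing forward flows. One should also record that $\rho_a$ applied to the locally defined flow curve $t\mapsto s\exp(ty)$ of $V_y$ through $s$ does stay in $S$, so that the pointwise differentiation in the display is legitimate; everything else is routine.
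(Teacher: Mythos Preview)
Your argument is correct and is essentially the same pointwise computation as the paper's: both use that right translation by $a=\exp x$ carries the left invariant field $V_y$ to $V_{e^{-\ad x}y}$, so that $\cL_y(\ph\circ\rho_a)=(\cL_{e^{-\ad x}y}\ph)\circ\rho_a$ as smooth functions, and then the hypothesis $\ph\in\cD_{e^{-\ad x}y}$ forces this to lie in $\cH_K$.

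The one place you over-worry is the ``delicate first step.'' For $x\in W$ the identity $(\rho(\exp x)\ph)(s)=\ph(s\exp x)$ is not something to be proved: in the reproducing-kernel model the representation $\rho$ on $\cH_K$ is \emph{defined} by right translation (see the paragraph introducing $\Psi$ just before \eqref{eq:8}). For $x\in\fh$ the same formula for $\rho_H(\exp x)$ follows in one line from \eqref{eq:8} via $(\rho_H(h)\ph)(s)=\langle\ph,\rho_H(h^{-1})K_s\rangle=\langle\ph,K_{sh}\rangle=\ph(sh)$. The paper exploits this by computing $\langle\rho(\exp x)\cL_{e^{-\ad x}y}\ph,K_s\rangle$ directly, using $\rho((\exp x)^*)K_s=K_{s\exp x}$ from \eqref{eq:8}, and then the flow identity $s\exp x\exp(te^{-\ad x}y)=s\exp(ty)\exp x$; this is the same content as your chain-rule step, just packaged through the reproducing kernel rather than through the differential of $\rho_a$.
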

\begin{proof}
For every $s\in S$, $\cL_{e^{-\ad x}y}\ph\in\cH$ leads to 
\begin{align*}
&\ \ \ \ \ps{\rho(\exp x)\cL_{e^{-\ad x}y}\ph,K_s}\\
&=\ps{\cL_{e^{-\ad x}y}\ph,
\rho((\exp x)^*)K_s}=\ps{\cL_{e^{-\ad x}y}\ph,K_{s\exp x}}\\
&=\df{t}\ph(s\exp x\exp(te^{-\ad x}y))=\df{t}\ph(s\exp ty\exp x)\\
&=\cL_{y}(\rho(\exp x)\ph)(s)=\ps{\cL_y(\rho(\exp x)\ph),K_s}.\qedhere
\end{align*}
\end{proof}

\begin{lem}\mlabel{L:ext}
Let $A:\D\ra\cH$ be a symmetric operator and $\D_1\subseteq\D$ a dense 
subspace. If 
$A_1:=A|_{\D_1}$ is essentially self-adjoint, then $A$ is also 
essentially self-adjoint, with $\ov{A}=\ov{A_1}$.
\end{lem}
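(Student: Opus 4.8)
The plan is to show both inclusions $\overline{A}\subseteq\overline{A_1}$ and $\overline{A_1}\subseteq\overline{A}$, after observing that essential self-adjointness of $A_1$ gives us $\overline{A_1}=\overline{A_1}^*=A_1^*$. Since $\D_1\subseteq\D$ and $A$ is symmetric, $A$ is an extension of $A_1$, hence $A_1^*$ is an extension of $A^*$; in particular $A^*$ is again symmetric on its domain (because $A^*\subseteq A_1^* = \overline{A_1}$ is a restriction of a self-adjoint operator). This already forces $\overline{A}=A^{**}$ to be symmetric. The key point will then be to pin down that $A^*$ is in fact self-adjoint, which gives $\overline{A}=A^{**}=A^*$ self-adjoint and hence $A$ essentially self-adjoint.

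First I would record the chain of restrictions $A_1\subseteq A\subseteq A^{**}=\overline A$ and dualize it to $\overline{A}^*=A^*\subseteq A_1^*=\overline{A_1}$. Next I would argue the reverse inclusion: since $A\subseteq \overline{A_1}$ and $\overline{A_1}$ is self-adjoint, taking adjoints gives $\overline{A_1}=\overline{A_1}^*\subseteq A^*$. Combining the two inclusions yields $A^* = \overline{A_1}$, which is self-adjoint. Therefore $\overline{A}=A^{**}=(A^*)^* = \overline{A_1}^{\,*}=\overline{A_1}$, so $\overline A$ is self-adjoint, i.e.\ $A$ is essentially self-adjoint, and the closures coincide.

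There is essentially no hard analytic step here; the only thing to be careful about is the direction of the inclusions when taking adjoints (an extension of operators reverses under $*$) and the fact that $A\subseteq\overline{A_1}$ really does hold — this is where symmetry of $A$ is used, since $A$ being symmetric means $A\subseteq A^*\subseteq A_1^* = \overline{A_1}$. Once that is in place, everything follows from $\overline{A_1}$ being self-adjoint together with the standard facts $\overline A = A^{**}$ and $\overline{A}^* = A^*$ for densely defined operators. I would also note explicitly that $\D$ is dense because it contains the dense subspace $\D_1$, so that all adjoints are well defined.

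The main (very mild) obstacle is simply bookkeeping with unbounded operators: making sure to invoke that a symmetric operator is contained in its adjoint, that closures are computed as the double adjoint of a densely defined operator, and that operator inclusion is order-reversed by the adjoint operation. No appeal to the flow, the kernel, or Fr\"ohlich's Theorem is needed; this is a purely functional-analytic lemma used to upgrade essential self-adjointness from a dense core $\cH_K^0$ to the larger invariant domain $\D$ in the preceding arguments.
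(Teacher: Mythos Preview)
Your argument is correct and follows essentially the same route as the paper: both derive the chain $\overline{A}\subseteq A^*\subseteq A_1^*=\overline{A_1}\subseteq\overline{A}$ from symmetry of $A$, the inclusion $A_1\subseteq A$, and essential self-adjointness of $A_1$. The paper simply compresses your two-step argument (first $A^*=\overline{A_1}$, then $\overline{A}=\overline{A_1}$) into that single chain of inclusions.
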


\begin{proof} From $A_1\subseteq A \subeq A^*$ it follows that 
\[ \oline A \subeq A^* \subeq A_1^* = \oline{A_1} \subeq \oline A\] 
(cf.\ \cite[Thm.~VIII.1]{RESI}). 
Therefore $\oline A = A^* = \oline{A_1}$ is self-adjoint. 
\end{proof}

\begin{pro} \mlabel{prop:4.9}
For $x\in\fh\cup i\fq$,
$e^{\ov{\ha(x)}}\D\subseteq\D$. Moreover for any
$y\in\fgc$ we have 
\begin{equation}\label{E:comrel2}
e^{\ov{\ha(x)}}\ha(y)e^{-\ov{\ha(x)}}=\ha(e^{\ad x}y).
\end{equation}
\end{pro}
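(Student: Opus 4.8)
The plan is to prove the two assertions of Proposition~\ref{prop:4.9} in sequence, handling first the invariance $e^{\ov{\ha(x)}}\D\subseteq\D$ and then the covariance relation \eqref{E:comrel2}. For the invariance, fix $x\in\fh\cup i\fq$ and $\ph\in\D$. By Proposition~\ref{P:sa} we have $\ov{\ha(x)}=\cL_x^K$, and for $x\in\fh\cup iW$ the operator $e^{t\cL_x^K}$ is the unitary one-parameter group $\rho_H(\exp tx)$ (for $x\in\fh$), resp.\ the extension of the hermitian semigroup $\rho(\Exp tx)$, resp.\ (for $x\in i\fq$, writing $x=i(x_+-x_-)$) a composite of such operators. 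In each case $e^{t\cL_x^K}$ is implemented by (a product of) operators of the form $\rho(\exp tx)$ or $\rho(\exp tx)^{-1}$ acting on $\cH_K$ by right translation of functions. So the real point is to show that $\D$ is stable under such operators. For this I would use Lemma~\ref{L:comrel}: iterating it, $\cL_{y_1}\cdots\cL_{y_n}(\rho(\exp x)\ph) = \rho(\exp x)\cL_{e^{-\ad x}y_1}\cdots\cL_{e^{-\ad x}y_n}\ph$, and since $\ph\in\D$ the right-hand side lies in $\cH_K$ (using \eqref{E:stab} and the fact that $\rho(\exp x)$ is bounded on $\cH_K$). Hence $\rho(\exp x)\ph\in\D$, and therefore $e^{t\cL_x^K}\D\subseteq\D$ for all relevant $x$.

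For the covariance relation, I would first establish it on the invariant core $\D$ rather than as an operator identity, and then extend. Fix $x\in\fh\cup i\fq$, $y\in\fgc$, and $\ph\in\D$. Since $e^{-t\ov{\ha(x)}}\D\subseteq\D$ by the first part, both sides of
\[ e^{t\ov{\ha(x)}}\ha(y)e^{-t\ov{\ha(x)}}\ph = \ha(e^{t\,\ad x}y)\ph \]
are well defined. I would prove this by differentiating in $t$: both $u(t):=e^{t\ov{\ha(x)}}\ha(y)e^{-t\ov{\ha(x)}}\ph$ and $w(t):=\ha(e^{t\,\ad x}y)\ph$ satisfy the same linear ODE $\dot f(t)=[\ov{\ha(x)},\,\cdot\,]$-type equation with the same initial value $\ha(y)\ph$ at $t=0$; more concretely, using Lemma~\ref{L:comrel} at the level of the flow one gets $e^{s\cL_x^K}\ha(y)\ph = \ha(e^{s\,\ad x}y)e^{s\cL_x^K}\ph$ directly for $x\in\fh\cup W$, since $\rho(\exp sx)\cL_{e^{-s\,\ad x}y}\ph=\cL_y\rho(\exp sx)\ph$ rearranges to exactly this. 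The case $x\in i\fq$ is then reduced to $x\in iW$ by the decomposition $x=i(x_+-x_-)$ and composing the group elements, using that $e^{\ad(x_+-x_-)}=e^{\ad x_+}e^{-\ad x_-}$ need not hold, so one must instead argue via the one-parameter group $e^{t\cL_x^K}$ itself and its generator; this is where I would invoke the uniqueness of solutions to the relevant ODE on the Banach space $\cH_K$ rather than a naive splitting.

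The main obstacle is precisely the passage from the algebraic identity on the core $\D$ to the asserted identity of (unbounded) operators, together with the handling of general $x\in i\fq$ as opposed to $x\in iW$. For the latter, the cleanest route is: the map $t\mapsto e^{t\ov{\ha(x)}}\ha(y)e^{-t\ov{\ha(x)}}\ph$ is continuous (by strong continuity of $e^{t\cL_x^K}$ and continuity of $\ha(\cdot)v$ from Proposition~\ref{P:repgc}), it agrees with the continuous curve $t\mapsto\ha(e^{t\,\ad x}y)\ph$ for all $t$ in a neighborhood of $0$ by the $W$-case applied to $\pm x_\pm$ after realizing the flow of $V_x$ as a composition of local flows of $V_{x_+}$ and $V_{-x_-}$ on $S$ (Remark~\ref{rem:a.9}), and a connectedness/propagation argument then gives it for all $t$. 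Once \eqref{E:comrel2} holds as an identity of bounded operators restricted to $\D$ and both sides have the same domain (namely $e^{\ad x}$ preserves $\fgc$ since $x\in\fh\cup i\fq$ and $\fgc$ is $e^{\ad(\fh\cup i\fq)}$-invariant — this uses that $[\fh,\fh]\subseteq\fh$, $[\fh,i\fq]\subseteq i\fq$, $[i\fq,i\fq]\subseteq\fh$), one concludes equality of the closures, which is the form needed to feed into Theorem~\ref{T:intcrit}.
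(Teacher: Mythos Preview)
Your argument for $x\in\fh$ is essentially the paper's Step~1 and is fine: Lemma~\ref{L:comrel} iterates to give $e^{\odro(x)}\D\subseteq\D$ together with the commutation relation when $x\in\fh\cup W$. The real difficulties start with $x\in i\fq$, and here your proposal has two genuine gaps.

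First, for $x\in iW$ you assert that $e^{t\cL_x^K}$ is ``implemented by operators of the form $\rho(\exp tx)$ acting by right translation,'' and then invoke Lemma~\ref{L:comrel}. But for $x=iy$ with $y\in W$ there is no element $\exp(tiy)$ in $S$ or in $H$; the unitary group $e^{it\odro(y)}$ is \emph{not} a translation operator on $\cH_K$, and Lemma~\ref{L:comrel} (which is stated only for $x\in\fh\cup W$) does not apply. The paper bridges this gap by \emph{analytic continuation}: since $\odro(y)$ has spectrum bounded above, $z\mapsto e^{z\odro(y)}$ is holomorphic on the right half-plane and strongly continuous up to the imaginary axis, while $z\mapsto\cL_{e^{-z\ad y}w}\ph$ is entire. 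The weak identity obtained in Step~1 for real $z\geq 0$ therefore propagates to $z=\pm i$, and \emph{this} is what yields $e^{\pm i\odro(y)}\D\subseteq\D$ and the commutation relation for $x\in iW$. Your outline never invokes holomorphy, and without it there is no mechanism to pass from the semigroup direction to the unitary direction.

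Second, for general $x\in i\fq$ you propose to ``realize the flow of $V_x$ as a composition of local flows of $V_{x_+}$ and $V_{-x_-}$'' or to use an ODE/connectedness argument. Neither works: the local flows of noncommuting vector fields do not compose to the flow of their sum (you yourself note $e^{\ad(x_+-x_-)}\neq e^{\ad x_+}e^{-\ad x_-}$), and an ODE of the form $\dot f=[\ov{\ha(x)},f]$ lives in unbounded operators where uniqueness is not available without further structure. The paper instead uses \emph{Trotter's product formula}: writing $x=x_1-x_2$ with $x_j\in W$, one has $\bigl(e^{i\odro(x_1)/n}e^{-i\odro(x_2)/n}\bigr)^n\to e^{i\ov{\ha(x)}}$ strongly (this requires knowing that $\odro(x_1)-\odro(x_2)$ is essentially self-adjoint on a common domain, which is Proposition~\ref{P:sa} plus Lemma~\ref{L:ext}). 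The weak commutation identity established in Step~2 for each factor passes to the $n$-fold product and then to the limit, giving the result for arbitrary $x\in i\fq$. These two analytic devices---holomorphic extension and Trotter---are the heart of the proof and are absent from your proposal.
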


\begin{proof} In the following we also write $\rho(h) := \rho_H(h)$ 
for $h \in H$. \\ 
{\bf Step 1:} Let $x\in\fh\cup W$ and $\ph\in\D$. Then 
Lemma~\ref{L:comrel} implies by induction that for every $n\in\bN$ and 
$y_1,\dots,y_n\in\fh\cup W$,
\begin{equation*}\label{E:multcom}
\cL_{y_n}\dots\cL_{y_1}\rho(\exp x)\ph=\rho(\exp x)\cL_{e^{-\ad x}y_n}\dots\cL_{e^{-\ad 
x}y_1}\ph\in\cH,
\end{equation*} 
and hence $e^{\odro(x)}\ph=\rho(\exp x)\ph\in\D(\odro(y_n)\dots\odro(y_1))$.
It follows that  
$$e^{\odro(x)}\D\subseteq\D,$$
and that we have for $\ph\in\D$ and (by linearity) for $y\in\fgc$, 
\begin{equation}\label{E:comrel3}
\ha(y)e^{\odro(x)}\ph=e^{\odro(x)}\ha(e^{-\ad x}y)\ph.
\end{equation}
In particular \eqref{E:comrel2} holds for $x\in\fh$.
Now let $\psi\in\D$. Then \eqref{E:comrel3} can be written
\begin{equation}\label{E:comrel}
\ps{-e^{\odro(x)}\ph,\ha(y)\psi}=\ps{e^{\odro(x)}\ha(e^{-\ad x}y)\ph,\psi},
\end{equation}
and this last equation is all we need for the following.

{\bf Step 2:} Assume that $x\in W$. Then the spectrum of $\odro(x)$ 
is bounded from above and hence
$t\mt e^{t\odro(x)}$ extends to a strongly continuous holomorphic semigroup 
$$\bC^+=\{z\in\bC\mid \Re z\geq0\}\ra B(\cH),\ z\mt e^{z\odro(x)},$$
which is holomorphic on $\interior(\bC^+)$ (cf.~
\cite[Prop.~9.9]{HILGERTNEEB} or \cite[Prop.~VI.3.2]{Ne00}).
For $\ph\in\D^1$, the map $\fg_\C\ra\cH$, $x\mt\cL_x\phi$ is $\bC$-linear
and continuous (Proposition~\ref{P:repgc}), hence the function 
$\bC\ra\cH$, $z\mt \cL_{e^{-z\ad x}y}\ph$ is analytic. Since the map 
\[B(\cH)\times\cH\ra\cH, \quad (T,\xi)\mt T\xi \] 
 is $\bC$-bilinear and continuous,
it follows that the map 
\[ \interior(\bC^+)\ra\cH,\quad z\mt e^{z \odro(x)}
\cL_{e^{-z\ad x}y}\ph \] 
is analytic. 
By the Analytic Continuation Principle, 
the equality \eqref{E:comrel} implies 
$$\ps{-e^{z\odro(x)}\ph,\ha(y)\psi}=\ps{e^{z \odro(x)}\cL_{e^{-z\ad x}y}\ph,
\psi}\quad \text{for}\ z\in\interior\bC^+, \psi \in \cD.$$ 
We then have by continuity
\begin{equation}\label{E:icomrel}
\ps{-e^{\pm i \odro(x)}\ph,\ha(y)\psi}
=\ps{e^{\pm i\odro(x)}\cL_{e^{\mp i\ad x}y}\ph,\psi}.
\end{equation}
This shows that, for $y\in\fh\cup iW$, $e^{\pm i\odro(x)}\ph\in\D(\ha(y)^*)
=\D(\odro(y)) = \cD_y$. We thus arrive at 
\[ e^{\pm i\odro(x)}\cD^1 \subeq \cD^1 
\quad \mbox{ with } \quad 
\cL_y e^{\pm i \odro(x)}= e^{\pm i\odro(x)}\cL_{e^{\mp i\ad x}y} 
\quad \mbox{ on } \quad \cD^1.\] 
By induction, we now obtain 
\[ e^{\pm i\odro(x)}\cD = \cD
\quad \mbox{ with } \quad 
\alpha(y) e^{\pm i \odro(x)}\res_\cD 
= e^{\pm i\odro(x)}\alpha(e^{\mp i\ad x}y).\] 

{\bf Step 3:} For 
$n\in\bN$, $x_1,x_2\in W$, $y\in\fgc$ and $\ph,\psi\in\D^1$ we now obtain 
\begin{multline}\label{E:rec}
\ps{-\left(e^{i\odro(x_1)}e^{-i\odro(x_2)}\right)^n\ph,\odro(y)\psi}\\
=\ps{\cL_{\left(e^{i\ad x_2}e^{-i\ad x_1}\right)^ny}\ph,
\left(e^{i\odro(x_2)}e^{-i\odro(x_1)}\right)^n\psi}
\end{multline}
from 
\[ \cL_y \left(e^{i\odro(x_1)}e^{-i\odro(x_2)}\right)^n\ph 
= \left(e^{i\odro(x_1)}e^{-i\odro(x_2)}\right)^n
\cL_{\left(e^{i\ad x_2}e^{-i\ad x_1}\right)^ny}\ph.\] 

Now let $x\in\fq$ and write it as  $x=x_1-x_2$ with $x_1,x_2\in W$.
Since $\odro(x_1)-\odro(x_2)$ is essentially self-adjoint on
$\D$ (Proposition~\ref{P:sa}), it is essentially self-adjoint as an operator on its domain
$\D(\odro(x_1))\cap\D(\odro(-x_2))$, and its closure is $\ov{\ha(x_1-x_2)}$
(Lemma~\ref{L:ext}). We therefore have Trotter's 
Product Formula \cite[Thm.\ VIII.31]{RESI}:
\begin{equation}
\lim_{n\ra\infty}\left(e^{i\frac{\odro(x_1)}n}e^{-i\frac{\odro(x_2)}n}\right)^n f=e^{i\ov{\ha(x_1-x_2)}}f \quad \mbox{ for } \quad f\in\cH_K.
\end{equation}
Replacing $x_j$ by $\frac{x_j}{n}$, $j=1,2$, in \eqref{E:rec} and taking the limit
we obtain
\begin{equation}
\ps{-e^{i\ov{\ha(x_1-x_2)}}\ph,\cL_y \psi}=\ps{e^{i\ov{\ha(x_1-x_2)}}
\cL_{e^{i\ad(x_2-x_1)}y}\ph,\psi} 
\quad \mbox{ for } \quad \ph, \psi \in \cD^1. 
\end{equation}
As above we now obtain by induction that 
$e^{i\ov{\ha(x)}}\D\subseteq\D$ and that \eqref{E:comrel2}
holds for $x\in i\fq$.
\end{proof}

\begin{proof}[Proof of Theorem~\ref{thm:3.6}:]
In view of Proposition~\ref{prop:4.9}, we obtain from 
Theorem~\ref{T:intcrit} a unitary 
representation of $G_c$ whose space of smooth vectors contains $\D$ 
and such that $\dpi|_\cD=\ha$ and $\odpi(x)=\ov{\ha(x)}$ for $x\in\fgc$. 
We conclude with Proposition~\ref{prop:Lieder} and 
\ref{P:sa} that $\oline{\dd\pi}(x)=\odro(x)$ for $x\in\fh$ and 
$\oline{\dd\pi}(ix)=i\odro(x)$ for $x\in W$. 
\end{proof}
\begin{rem}\label{R:dom} Since $\cD\subseteq\cHy(\pi)$, it immediatly follows
from  
\[ \cHy(\pi)=\bigcap_{x_j\in\fgc,\ n\in\bN}\D(\oline{\dd\pi}(x_n)
\dots\oline{\dd\pi}(x_1))\subseteq\cD\] 
(\cite[Lemma 3.4, Remark 8.3]{Ne10b}), that
 $\D=\cHy(\pi)$.
\end{rem}

\section{Holomorphic extension of semibounded representations}\label{S:holext}

In this section we obtain a result which is a converse
to the L\"uscher--Mack Theorem. It is new even in the 
finite dimensional setting. At the same time, 
we obtain the existence of holomorphic extensions for 
semibounded unitary representations.

\begin{de} \mlabel{D:5.3} 
Let $G$ be a Banach--Lie group with Lie algebra $\fg$. 
For a smooth unitary representation $(\pi,\cH)$ of $G$ we 
consider the map
\[ s_\pi:\fg\ra\bR\cup\{\infty\},\quad 
s_\pi(x):=\sup\left(\Spec(i\odpi(x))\right).\] 

(a) A smooth unitary representation $(\pi,\cH)$ of $G$ 
is called {\it semibounded} if
$s_\pi$ is bounded on a non-empty open subset of $\fg$. Then 
the cone $W_\pi$, consisting of all those 
$x \in \g$ for which $s_\pi$ is bounded on a neighborhood of $x$, 
is an open $\Ad(G)$-invariant convex cone in $\g$. 
Moreover, $s_\pi \: W_\pi \to \R$ is a continuous 
convex function (\cite{Ne08}).

For a convex cone $W \subeq \g$, we say that $\pi$ is {\it $W$-semibounded} 
if $s_\pi(W) \subeq \R$ and $s_\pi \: W \to \R$ is locally bounded.

(b) A convex cone $W \subeq \g$ is said to be 
{\it relatively open} if the linear subspace 
$W - W$ of $\g$ is closed and $W$ is open in $W - W$.   
\end{de}

For the definition of a $C^1$-map we use in the next lemma see Definition~\ref{D:smooth}.
\begin{lem}\mlabel{L:diffexpcone} Let $W$ be a relatively open convex 
cone in $\fg$, $\fq:=W-W$, and $(\pi,\cH)$ 
be a smooth $W$-semibounded unitary representation of $G$. 
Then, for every $v\in\cHy(\pi)$, the map
$$ \rho^v \: W\ra \cH,\ x\mt e^{i\odpi(x)}v,$$
is $C^1$ and
$$T_x(\rho^v)(y)=
\oline{\dpi}\left(\int_0^1{e^{s\ad ix}yds}\right)e^{i\oline{\dpi}(x)}v.$$
The map 
$G \times W \to \cH, (g,x) \mapsto \pi(g)\rho^v(x)$ is also $C^1$.
\end{lem}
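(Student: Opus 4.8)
The plan is to establish the $C^1$-statement for $\rho^v$ first, and then bootstrap to the map on $G\times W$ using the smoothness of $\pi$ and the chain rule.

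\textbf{Step 1: Differentiability of $\rho^v$ on $W$.} Fix $v\in\cHy(\pi)$ and $x\in W$. Since $\pi$ is $W$-semibounded, $s_\pi(x)<\infty$, so the spectrum of $i\,\odpi(x)$ is bounded above and $t\mapsto e^{t\odpi(ix)}=e^{it\odpi(x)}$ extends to a strongly continuous holomorphic semigroup on the right half plane $\bC^+$, holomorphic on $\interior(\bC^+)$ (as in Step~2 of the proof of Proposition~\ref{prop:4.9}, citing \cite[Prop.~VI.3.2]{Ne00}). For a fixed direction $y\in\fq$ and small real $\epsilon$ we have $x+\epsilon y\in W$ (relative openness), and we want to differentiate $\epsilon\mapsto e^{i\odpi(x+\epsilon y)}v$. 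The key computational input is the Duhamel-type formula: for $\xi\in\cD(\odpi(x))$ one has, formally,
\[ e^{i\odpi(x+\epsilon y)}\xi - e^{i\odpi(x)}\xi
= \int_0^1 e^{(1-s)i\odpi(x)}\bigl(i\epsilon\,\odpi(y)\bigr)e^{si\odpi(x+\epsilon y)}\xi\,ds,\]
which is justified because $v$ (hence $e^{si\odpi(x+\epsilon y)}v$, since smooth vectors form an invariant core) lies in the smooth domain. Dividing by $\epsilon$, letting $\epsilon\to 0$, and using strong continuity of the semigroups together with the continuity of $\odpi(\cdot)$ on smooth vectors, the integrand converges to $e^{(1-s)i\odpi(x)}\,i\,\odpi(y)\,e^{si\odpi(x)}v$. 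The conjugation relation $e^{si\odpi(x)}\odpi(y)e^{-si\odpi(x)} = \odpi(e^{s\ad ix}y)$ (valid on smooth vectors, cf.\ the commutation relations established for the representation $\pi$) then rewrites the integrand as $\odpi(e^{s\ad ix}y)\,e^{i\odpi(x)}v$, giving
\[ T_x(\rho^v)(y) = \oline{\dpi}\Bigl(\int_0^1 e^{s\ad ix}y\,ds\Bigr)e^{i\oline{\dpi}(x)}v,\]
since $\int_0^1 e^{s\ad ix}y\,ds\in\g_c$ and $e^{i\odpi(x)}v$ is smooth.

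\textbf{Step 2: Continuity of the derivative.} To conclude $\rho^v$ is $C^1$ one must check that $(x,y)\mapsto T_x(\rho^v)(y)$ is continuous on $W\times\fq$. The factor $\int_0^1 e^{s\ad ix}y\,ds$ depends continuously (indeed analytically) on $x$ and linearly on $y$, with values in $\g_c$; the factor $e^{i\odpi(x)}v$ depends continuously on $x\in W$ by strong continuity of the holomorphic semigroup in its parameter (local boundedness of $s_\pi$ on $W$ gives local uniform bounds on the semigroup norms, so no issue with the unbounded region). The only subtlety is that $\oline{\dpi}$ is unbounded, so we cannot simply say "product of two continuous maps." Instead I would argue that $x\mapsto e^{i\odpi(x)}v$ is continuous as a map into $\cHy(\pi)$ with its natural topology — or, more elementarily, fix $x_0$ and note that for $x$ near $x_0$, $e^{i\odpi(x)}v = e^{i\odpi(x)}e^{-i\odpi(x_0)}\cdot e^{i\odpi(x_0)}v$, express the cocycle $e^{i\odpi(x)}e^{-i\odpi(x_0)}$ via a Duhamel expansion, and push $\oline{\dpi}(z)$ through; this reduces everything to continuity statements for $\pi$ restricted to the subgroup generated by $\exp(i W)$, which is exactly what the semiboundedness and the results of \cite{Ne10b} provide. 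This continuity step is the main obstacle, because it is where the unboundedness of the generators forces one to work in the topology of smooth vectors rather than the Hilbert norm.

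\textbf{Step 3: The map on $G\times W$.} Finally, $(g,x)\mapsto\pi(g)\rho^v(x)$ is $C^1$ because it is the composite of $(g,x)\mapsto(g,\rho^v(x))$ — which is $C^1$ by Step~1--2 and the fact that $g\mapsto g$ is smooth — with the action map $G\times\cH\to\cH$. Since $\pi$ is a smooth representation and $\rho^v(x)\in\cHy(\pi)$ for every $x$, with $x\mapsto\rho^v(x)$ being $C^1$ into (a suitable Fréchet completion of) $\cHy(\pi)$, the action map is $C^1$ on $G\times\cHy(\pi)$ (smoothness of $\pi$ means precisely that $G\times\cHy(\pi)\to\cH$ is smooth), and the chain rule yields that the composite is $C^1$ with
\[ T_{(g,x)}\bigl(\pi(g)\rho^v(x)\bigr)(\xi,y)
= \pi(g)\Bigl(\odpi(\xi)\rho^v(x) + T_x(\rho^v)(y)\Bigr),\quad \xi\in\g,\ y\in\fq.\]
This completes the argument; the real work is all in Steps~1 and 2.
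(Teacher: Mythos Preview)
Your argument has a genuine gap in Step~1, and it is precisely the step the paper works hardest to justify. You invoke the conjugation relation
\[
e^{si\odpi(x)}\,\dpi(y)\,e^{-si\odpi(x)} = \dpi(e^{s\ad ix}y)
\]
on smooth vectors, and the invariance $e^{si\odpi(x)}\cH^\infty\subseteq\cH^\infty$, as if they were known ``commutation relations established for the representation $\pi$.'' But the only commutation relation available \emph{a priori} for the unitary representation $\pi$ is the real one, $\pi(\exp tx)\dpi(y)\pi(\exp(-tx))=\dpi(e^{t\ad x}y)$ for $t\in\R$. The operator $e^{si\odpi(x)}$ is not of the form $\pi(g)$ for any $g\in G$; it belongs to the hermitian semigroup, and passing from real $t$ to $t=is$ is exactly the analytic continuation that must be carried out. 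The paper does this explicitly: it forms the function
\[
F(z)=\langle -e^{z\odpi(x)}v,\dpi(y)w\rangle-\langle e^{z\odpi(x)}\dpi(e^{-z\ad x}y)v,w\rangle,
\]
notes it is holomorphic on the open upper half-plane and vanishes on $\R$ (the real commutation relation), and applies the Schwarz Reflection Principle to get $F\equiv 0$ on $\C_+$. Only then does one know $e^{i\odpi(x)}v\in\cD(\odpi(y))$ and, by iteration, $e^{i\odpi(x)}\cH^\infty\subseteq\cH^\infty$ with the desired intertwining relation. Without this, your Duhamel integral is not even well-defined: you need $e^{si\odpi(x+\epsilon y)}v$ to lie in $\cD(\odpi(y))$, which is part of what is to be proved.

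Once the commutation relation and domain invariance are in hand, your Duhamel approach is essentially the content of Proposition~\ref{P:derpath} in Appendix~\ref{B}, which the paper simply cites. So the overall shape of your Steps~1--2 is right, but you have skipped the one genuinely nontrivial ingredient. A smaller issue in Step~3: you want $\rho^v$ to be $C^1$ as a map into $\cH^\infty$ with its Fr\'echet topology, which is stronger than what Steps~1--2 give. The paper avoids this by computing the partial $g$-derivative directly as $\pi(g)\dpi(y)e^{i\odpi(x)}v=\pi(g)e^{i\odpi(x)}\dpi(e^{-\ad(ix)}y)v$ and reading off continuity from the continuity of $\hat\rho$ and of the $G$-action on $\cH$; this is more elementary than invoking smoothness of $G\times\cH^\infty\to\cH$.
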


\begin{proof}
For $x\in W$,  $y\in\fgc$ and $v,w\in\cHy$,
the function
$$F(z):=\ps{-e^{z\odpi(x)}v,\dpi(y)w}-\ps{e^{z\odpi(x)}\dpi(e^{-z\ad x}y)v,w}$$
is continuous on the closed upper half-plane 
$\C_+ := \{ z \in \C \: \Im z \geq 0\}$ 
and holomorphic on its interior (\cite[Prop.~VI.3.2]{Ne00}; 
see also Step $2$ in the proof of Proposition~\ref{prop:4.9}). 
Since $F(t)=0$ for $t\in\bR$ by Proposition~\ref{prop:4.9}, 
the Schwarz Reflection Principle
\cite[Thm.~11.14]{RUDIN} implies that $F$ vanishes on $\bC_+$.
It follows that, for $x\in W$ and $v\in \cH^\infty$ we have 
$e^{i\oline{\dd\pi(x)}}v \in \cD\big(\oline{\dd\pi(y)}\big)$ for every 
$y \in \g$ with 
\[ \oline{\dpi}(y)e^{i\odpi(x)}v=e^{i\odpi(x)}\dpi(e^{-\ad ix}y)v.\] 
Since $\dpi(e^{-\ad ix}y)v$ is again a smooth vector, we can iterate this 
argument to obtain inductively 
\[ e^{i\odpi(x)}v\in\cD := \bigcap_{y_n,\dots,y_1\in \g,\ n\in\bN}{\D(\odpi(y_n))\dots\D(\odpi(y_1))}.\] 
We know by \cite[Lemma 3.4, Remark 8.3]{Ne10b} that $\D=\cHy$. Hence we have
\begin{equation}\label{E:comagain}
e^{i\odpi(x)}\cH^\infty\subseteq \cH^\infty
\ \text{and}\ {\dpi}(y)e^{i\odpi(x)}
=e^{i\odpi(x)}\dpi(e^{-\ad ix}y)
\end{equation} 
for $x \in W$ and $y\in\fgC$.
Now let $v\in \cH^\infty$. In view of \eqref{E:comagain}, 
Proposition~\ref{P:derpath} implies that 
$\rho^v$ is $C^1$ with 
\[ T_x(\rho^v)(y)=
{\dpi}\left(\int_0^1{e^{s\ad ix}yds}\right)
e^{i\oline{\dpi}(x)}v
=e^{i\oline{\dpi}(x)}
{\dpi}\left(\int_0^1{e^{-s\ad ix}yds}\right)v\] 
and that the map 
\[ \hat\rho \: W\times\cH\ra\cH, \quad (x,v)\mt \eidpi{x}v \] 
is continuous. 

Finally, we observe that the map 
\[ F \: G \times W \to \cH, \quad F(g,x) := \pi(g)\rho^v(x) \] 
is continuous because the $G$-action on $\cH$ defined by $\pi$ is 
continuous. We have just seen that $F$ is partially differentiable  
in $x$ with continuous partial derivative 
\[ G \times W \times \fq \to \cH, \quad 
(g,x,y) \mapsto \pi(g) T_x(\rho^v)y.\] 
We have also seen that $\rho^v(W) \subeq \cH^\infty(\pi)$, so that 
the partial derivatives in $g$ also exist and are given by 
\[ TG \times W \to \cH, \quad 
(g.y,x) \mapsto 
\pi(g) \dd\pi(y) e^{i\oline{\dd\pi}(x)}v  
= \pi(g)  e^{i\oline{\dd\pi}(x)}\dd\pi(e^{-\ad(ix)}y)v.\] 
As $G$ acts continously on $\cH$, $\hat\rho$ is continuous,  
and the adjoint action of $G$ on $\g$ is continuous, this 
function is continuous. This implies that $F$ is $C^1$ 
(cf.\ \cite{Ham82}).
\end{proof}

\begin{de} Let $W$ be a relatively open convex cone in $\fg$, 
$\fq:=W -W$, and $\fh$ be a closed subalgebra of $\fg$.
The cone $W$ is called {\it $\fh$-compatible} if
\[ [W,W]\subseteq \fh \quad \mbox{ and } \quad e^{\ad\fh}W\subseteq W.\]  
Then $\fg_c:=\fh\oplus i\fq \subeq \g_\bC$ is a closed subalgebra 
which is turned in a symmetric Banach--Lie algebra by 
the involution $\theta(x+iy) := x-iy$ for 
$x\in \fh, y \in \fq$. 

An $\fh$-compatible cone $W \subeq \g$ is 
said to be {\it integrable} if there exists a 
symmetric Banach--Lie group $(G_c,\theta)$ with 
symmetric Lie algebra $\fg_c=\fh\oplus i\fq$ such that, 
for $H := (G^\theta_c)_0$,  the polar map 
\[ H \times iW \to G_c, \quad (h,x) \mapsto h \exp(x)\] 
is an analytic diffeomorphism onto an open subsemigroup 
\[ S = S_H(iW) = H \exp(iW) \subeq G.\] 
Then $S$ is invariant under the involution 
$s^*=\theta(s)^{-1}$, turning it into an involutive semigroup 
$(S,*)$. 

From the discussion of Banach--Olshanski semigroups in 
Appendix~\ref{A}, it then follows that for each 
connected Banach--Lie group $H_1$ locally isomorphic to 
$H$ to which the adjoint action of $\fh$ on $\g_c$ integrates, 
there exists an involutive  Banach--Olshanski semigroup 
$S_{H_1}(iW)$ with a polar decomposition which is a quotient 
of the universal covering semigroup of $S_H(iW)$. 
\end{de}

\begin{theo} \mlabel{T:Analcont} Let $G$ be a Banach--Lie group with Lie 
algebra $\fg$, $\fh$ be a closed complemented 
Lie subalgebra of $\fg$, and $H := \lan \exp \fh \ran$ 
be the corresponding 
integral subgroup in $G$. Let $(\pi, \cH)$
be a smooth $W$-semibounded unitary representation of $G$ 
for the integrable $\fh$-compatible cone $W$. Then the formula
$$\rho(h\Exp ix):=\pi(h)e^{i\odpi(x)}\ \text{for}\ h\in H\ \text{and}\ x\in W,$$
defines a strongly continuous smooth $*$-representation $\rho$ of 
$S_H(iW)$ on~$\cH$. 
\end{theo}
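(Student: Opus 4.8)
The plan is to verify successively that $\rho$ is well defined and locally bounded, that it is a $*$-homomorphism, and that it is strongly continuous and smooth; only the multiplicativity requires real work.

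Since the polar map $H\times iW\to S$, $(h,x)\mapsto h\Exp(ix)$, is a diffeomorphism, $\rho$ is a well-defined map $S\to B(\cH)$. For $x\in W$ the operator $i\odpi(x)$ is self-adjoint with $\sup\Spec(i\odpi(x))=s_\pi(x)<\infty$ --- this is where $W$-semiboundedness enters --- so $e^{i\odpi(x)}$ is a positive bounded operator of norm $e^{s_\pi(x)}$; hence $\|\rho(h\Exp ix)\|=e^{s_\pi(x)}$, and continuity of $s_\pi$ on $W$ gives local boundedness. As each $e^{i\odpi(x)}$ is injective with dense range, $\rho$ is also non-degenerate. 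The involution formula $(h\Exp ix)^*=h^{-1}\Exp(i\Ad(h)x)$ (the analogue of \eqref{eq:invol}) together with $\pi(h)^{-1}e^{i\odpi(x)}\pi(h)=e^{i\odpi(\Ad(h)^{-1}x)}$ (conjugation by a unitary commutes with the functional calculus) gives $\rho(s^*)=\rho(s)^*$. Finally, since $H$ acts on $S$ by multipliers --- $\Exp(ix)h=h\Exp(i\Ad(h)^{-1}x)$ with $\Ad(h)^{-1}x\in W$ --- one may write $s_1s_2=(h_1h_2)\Exp(iy)\Exp(iz)$ with $y=\Ad(h_2)^{-1}x_1$, $z=x_2\in W$, and the same conjugation identity for $\pi(h_2)$ shows that $\rho(s_1s_2)=\rho(s_1)\rho(s_2)$ will follow once we prove
\begin{equation}\label{E:core}
\rho(\Exp(iy)\Exp(iz))=e^{i\odpi(y)}e^{i\odpi(z)}\qquad\text{for all }y,z\in W.
\end{equation}

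To prove \eqref{E:core}, fix $y,z\in W$ and $v\in\cHy(\pi)$. For $t\in(0,1]$ put $s(t):=\Exp(iy)\Exp(itz)\in S$, with polar decomposition $s(t)=h(t)\Exp(iw(t))$; as $\Exp(i\,\cdot\,z)$ is a one-parameter semigroup and multiplication in $S$ is smooth, $t\mapsto(h(t),w(t))$ is smooth, with $h(t)\to e$ and $w(t)\to y$ as $t\to0^+$. Set $u_1(t):=\rho(s(t))v=\pi(h(t))e^{i\odpi(w(t))}v$ and $u_2(t):=e^{i\odpi(y)}e^{it\odpi(z)}v$. By Lemma~\ref{L:diffexpcone} the map $G\times W\to\cH$, $(g,x)\mapsto\pi(g)e^{i\odpi(x)}v$, is $C^1$, so $u_1$ is $C^1$ on $(0,1]$ and $u_1(0^+)=e^{i\odpi(y)}v=u_2(0^+)$. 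Differentiating $u_1$ in polar coordinates and using the left-invariant vector fields on $S$ (Remark~\ref{rem:a.9}) gives $\tfrac{d}{dt}u_1(t)=\dpi(\Ad(h(t))\zeta(t))u_1(t)$ for a curve $\zeta(t)\in\fgc$; since the left logarithmic derivative of $s(t)=\Exp(iy)\Exp(itz)$ is the constant $iz$ (because $[z,z]=0$), comparing it with that of $s(t)=h(t)\Exp(iw(t))$ and using the multiplicativity of $\Ad$ on $S$ forces $\Ad(h(t))\zeta(t)=\Ad(s(t))(iz)=e^{\ad(iy)}(iz)=:X$, which is independent of $t$ and lies in $\fgc=\fh\oplus i\fq$ (note $\ad(iy)$ preserves $\fgc$). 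Thus $\tfrac{d}{dt}u_1(t)=\dpi(X)u_1(t)$. On the other hand, writing $e^{it\odpi(z)}=e^{t\odpi(iz)}$, differentiating, and then moving $e^{i\odpi(y)}$ past $\dpi(iz)$ by the commutation relation \eqref{E:comagain} yields $\tfrac{d}{dt}u_2(t)=e^{i\odpi(y)}\dpi(iz)e^{it\odpi(z)}v=\dpi(e^{\ad(iy)}(iz))u_2(t)=\dpi(X)u_2(t)$. Hence $u_1$ and $u_2$ are $C^1$ solutions of $\dot u=\dpi(X)u$ on $(0,1]$ with the same limit at $0^+$; by the uniqueness discussed below they coincide, and evaluating at $t=1$ and using density of $\cHy(\pi)$ together with boundedness of the operators gives \eqref{E:core}. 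Therefore $\rho$ is a $*$-homomorphism.

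The main obstacle is the uniqueness for $\dot u=\dpi(X)u$: writing $X=A+iB$ with $A\in\fh$, $B=\cos(\ad y)z\in\fq$, the operator $\dpi(X)$ is unbounded and in general not skew-adjoint (due to the component $iB\in i\fq$), so no direct energy estimate is available. I expect this to be handled exactly by the analytic continuation methods of \cite{Mer10} already used in the proof of Lemma~\ref{L:diffexpcone}: after conjugating away the unitary contribution of $A$, the remaining generator has the form $i\,\odpi(\,\cdot\,)$ with argument in $\fq=W-W$, the relevant solutions extend holomorphically in the time variable to a half-plane, and are pinned down by their values on the real axis via the Schwarz Reflection Principle. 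Once multiplicativity is known, the remaining claims follow from Lemma~\ref{L:diffexpcone}: for $v\in\cHy(\pi)$ the orbit map $\rho^v\colon S\to\cH$ is $C^1$ (compose the $C^1$-map $(g,x)\mapsto\pi(g)e^{i\odpi(x)}v$ with the polar diffeomorphism $S\cong H\times W$), and iterating the differentiation --- using $\dpi(\xi)\cHy(\pi)\subseteq\cHy(\pi)$ and $e^{i\odpi(x)}\cHy(\pi)\subseteq\cHy(\pi)$ from \eqref{E:comagain} --- shows that $\rho^v$ is in fact smooth. Hence $\cHy(\pi)\subseteq\cHy(\rho)$ is dense and $\rho$ is smooth, and since $\rho$ is locally bounded with orbit maps continuous on the dense subspace $\cHy(\pi)$, it is strongly continuous (Remark~\ref{rem:locbo}(b)).
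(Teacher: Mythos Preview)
Your overall architecture matches the paper's: reduce multiplicativity to the exponential identity \eqref{E:core}, prove it by an ODE argument for the curve $t\mapsto\rho(s(t))v$ in polar coordinates using Lemma~\ref{L:diffexpcone}, then deduce smoothness and strong continuity. However, there is a genuine gap at exactly the point you flag, and the paper's proof shows how to close it by a different choice of one-parameter deformation.

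You deform the \emph{second} factor, $s(t)=\Exp(iy)\Exp(itz)$, and correctly obtain $u_1'(t)=\dpi(X)u_1(t)$ with $X=e^{\ad(iy)}(iz)=-\sin(\ad y)z+i\cos(\ad y)z\in\fh\oplus i\fq$. As you note, $\dpi(X)$ is neither skew-adjoint nor the generator of a $C_0$-semigroup (the $\fq$-component $\cos(\ad y)z$ need not lie in $W$), so uniqueness for $\dot u=\dpi(X)u$ is not available by standard means. Your proposed rescue via ``analytic continuation methods'' does not work as stated: to invoke Schwarz reflection you would need \emph{both} $u_1$ and $u_2$ to extend holomorphically in $t$, but you only have this for $u_2$; for $u_1$ there is no a priori complex-time definition. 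Conjugating away the $\fh$-part produces a time-dependent generator, which does not help either.

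The paper avoids the problem by deforming the \emph{first} factor: with $\eta(t)=\Exp(tix_1)\Exp(ix_2)$ the \emph{right} logarithmic derivative is the constant $ix_1\in iW$, so $\gamma(t)=\rho(\eta(t))v$ satisfies $\gamma'(t)=\odpi(ix_1)\gamma(t)$. Here $i\odpi(x_1)$ is self-adjoint with spectrum bounded above, hence generates the $C_0$-semigroup $t\mapsto e^{ti\odpi(x_1)}=\rho(\Exp(tix_1))$, and uniqueness of strong solutions with prescribed initial value $\gamma(0^+)=\rho(\Exp ix_2)v$ is the classical fact for generators of $C_0$-semigroups (the paper cites \cite[p.~481]{Kat66}). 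This yields $\gamma(t)=\rho(\Exp(tix_1))\rho(\Exp ix_2)v$ directly. The rest of your argument (Steps for $*$-compatibility, $H$-equivariance, and the smoothness bootstrap via \eqref{E:comagain}) is essentially the paper's Steps~2--4 and is fine.
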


We shall need the following Chain Rule (\cite[Lemma~4]{Mer10}): 
\begin{lem}\label{L:PR} Let $I \subeq \R$ be an open interval, 
$E$ and $F$ be two Banach spaces and
$L_s(E,F)$ denotes the space of continuous linear operators 
from $E$ to $F$ endowed
with the strong operator topology.
Let $I \to L_s(E,F), t \mapsto K(t)$ be a continuous
path such
that $t\mt K(t)v$ is differentiable for every $v$ in a subspace $\D$ of $E$ and let $\hg(t)$ be a differentiable
path in $\D$. We write $K'(t):\D\ra F$ for the linear
operator obtained by $K'(t)v:=\dif{t} K(t)v$ for $v\in\D$.
Then $t\mt K(t)\hg(t)$ is differentiable with
$$\dif{t} K(t)\hg(t)=K'(t)\hg(t)+K(t)\hg'(t).$$
\end{lem}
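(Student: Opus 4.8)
The plan is to compute the difference quotient of $t\mapsto K(t)\hg(t)$ directly and to split it, by the usual add-and-subtract device, into one piece controlled by the differentiability of $K$ on a fixed vector and one piece controlled by the differentiability of $\hg$. Fixing $t\in I$ and choosing $\delta>0$ with $[t-\delta,t+\delta]\subeq I$, I would write, for $0<|s|<\delta$,
\[
\frac{K(t+s)\hg(t+s)-K(t)\hg(t)}{s}
= K(t+s)\,\frac{\hg(t+s)-\hg(t)}{s}
+\frac{\bigl(K(t+s)-K(t)\bigr)\hg(t)}{s},
\]
and then show that the two summands converge to $K(t)\hg'(t)$ and $K'(t)\hg(t)$ respectively; adding the limits yields the claimed formula.

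The second summand is the straightforward one. Since $\hg(t)\in\D$, the hypothesis guarantees that the path $r\mapsto K(r)\hg(t)$ is differentiable at $r=t$, and its derivative is by definition $K'(t)\hg(t)$; hence this summand converges to $K'(t)\hg(t)$ as $s\to0$. No uniformity in the operators is needed here, because $\hg(t)$ is a single fixed vector.

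The first summand is where the real work lies, and I expect it to be the main obstacle. Writing $w_s:=\frac{\hg(t+s)-\hg(t)}{s}\in\D$, so that $w_s\to\hg'(t)$ in $E$ by the differentiability of $\hg$, I would split
\[
K(t+s)w_s-K(t)\hg'(t)
=K(t+s)\bigl(w_s-\hg'(t)\bigr)
+\bigl(K(t+s)-K(t)\bigr)\hg'(t).
\]
The second term tends to $0$ by the strong continuity of $K$ applied to the fixed vector $\hg'(t)$. The first term is delicate precisely because $K$ is only strongly continuous, not norm continuous, so I cannot control $K(t+s)$ by a single operator bound without extra input. The key step is to invoke the Banach--Steinhaus theorem (cf.\ \cite{RESI}): for each fixed $v\in E$ the map $r\mapsto K(r)v$ is continuous, hence bounded on the compact interval $[t-\delta,t+\delta]$, so pointwise boundedness together with completeness of $E$ yields a finite constant $M:=\sup_{|r-t|\le\delta}\|K(r)\|$. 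Then $\|K(t+s)(w_s-\hg'(t))\|\le M\,\|w_s-\hg'(t)\|\to0$, so the first summand converges to $K(t)\hg'(t)$. Combining the two limits completes the argument. The only genuine subtlety throughout is this passage from pointwise to locally uniform operator bounds; everything else is elementary product-rule bookkeeping.
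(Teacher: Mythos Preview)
Your argument is correct: the add-and-subtract decomposition, the convergence of the second summand by the differentiability hypothesis on $K(\cdot)\hg(t)$, and the treatment of the first summand via Banach--Steinhaus to obtain a local uniform bound on $\|K(r)\|$ all go through exactly as you describe. The paper itself does not supply a proof of this lemma; it simply quotes it as \cite[Lemma~4]{Mer10}, so there is no in-paper argument to compare against. Your proof is the standard one and would serve perfectly well as a self-contained justification.
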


\begin{proof}[Proof of Theorem~\ref{T:Analcont}]
{\bf Step 1:} Let us first prove that, for $x_1,x_2\in W$,
\begin{equation}\label{E:comW}
\rho(\Exp ix_1\Exp ix_2)=\rho(\Exp ix_1)\rho(\Exp ix_2).
\end{equation}
For this purpose, let us write for $t>0$,
\begin{equation}\label{E:prod}
\eta(t) 
:= \Exp tix_1\Exp ix_2=h_t\Exp(ix(t))\ \text{with}\ h_t\in H,\ x(t)\in W.
\end{equation}
Now let $v\in\cHy$ and consider  
$\gamma(t):=\rho(\eta(t))v = \pi(h_t)e^{i\odpi(x(t))}v$. By 
Lemma~\ref{L:diffexpcone} and the Chain Rule 
(Lemma~\ref{L:PR}), applied with $K(t) = \pi(h_t)$ and 
$\cD = \cH^\infty(\pi)$, the path $\gamma(t)$ is 
differentiable for $t > 0$, and, denoting by $\delta$ the 
right logarithmic derivative (see Definition~\ref{def:logder}), we obtain 
with Proposition~\ref{P:Exp} 
\begin{align*}
\gamma'(t)
&= \oline{\dpi}(\delta h_t)\gamma(t)+\pi(h_t) 
\oline{\dd\pi}\Big(\int_0^1 e^{is\ad x(t)}ix'(t)ds\Big)
e^{\odpi(ix(t))}v\\
&=\oline{\dpi}(\delta h_t)\gamma(t)+\pi(h_t)
\oline{\dpi}\big(\delta(\Exp)_{ix(t)}ix'(t)\Big)e^{\odpi(ix(t))}v\\
&=\oline{\dd\pi}\Big(\delta(h)_t +\Ad(h_t)\delta(\Exp)_{ix(t)}ix'(t)\Big)
\gamma(t)\\
&=\oline{\dd\pi}(\delta(\eta)_t)\gamma(t)
\quad\quad\text{(by Proposition~\ref{P:pr})}\\
&=\oline{\dpi}(ix_1)\gamma(t).
\end{align*}
Since $\lim_{t\ra0}\gamma(t)
= \lim_{t\ra0} \pi(h_t)e^{i\odpi(x(t))}v =\rho(\Exp ix_2)v$ 
(Lemma~\ref{L:diffexpcone}, Lemma~\ref{lem:approx}), 
we obtain with \cite[p.~481]{Kat66} that 
\[ \gamma(t)=\rho(\Exp tix_1)\rho(\Exp ix_2)v,\] 
and \eqref{E:comW} follows for $t = 1$.

{\bf Step 2:} For $h\in H$ and $x\in W$ we have
$$\pi(h)\odpi(x)\pi(h)^{-1}=\odpi(\Ad(h)x), $$
so that 
\begin{equation}\label{E:adcom}
e^{\odpi(i\Ad(h)x)}=e^{\pi(h)i\odpi(x)\pi(h)^{-1}}=\pi(h)e^{i\odpi(x)}\pi(h)^{-1}.
\end{equation}
From \eqref{E:adcom} we obtain the relation
$$\rho(h\Exp x)^*=\rho((h\Exp x)^*),$$
and we further derive 
\begin{equation}\label{E:mult}
\rho(sh)=\rho(s)\pi(h) \quad \mbox{ for }\quad s \in S, h \in H.
\end{equation}
{\bf Step 3:} Now we can prove that for $h_1,h_2\in H$ and $x_1,x_2\in W$,
\begin{equation}\label{E:prod2}
\rho(h_1\Exp ix_1h_2\Exp ix_2)=\rho(h_1\Exp ix_1)\rho(h_2\Exp ix_2).
\end{equation}
With \eqref{E:comW} and \eqref{E:adcom} we obtain 
\begin{align*}
\rho(h_1\Exp ix_1)
\rho(h_2\Exp ix_2)
&=\pi(h_1)e^{\odpi(ix_1)}\pi(h_2)e^{i\odpi(x_2)}\\
&=\pi(h_1)\pi(h_2)e^{i\odpi(\Ad(h_2)^{-1}x_1)}e^{i\odpi(x_2)} \\ 
&=\pi(h_1h_2)\rho(\Exp(i \Ad(h_2)^{-1}x_1)) \rho(\Exp(i x_2)) \\
&=\pi(h_1h_2)\rho(\Exp(i \Ad(h_2)^{-1}x_1) \Exp(i x_2)) \\
&=\rho(h_1 h_2\Exp(i \Ad(h_2)^{-1}x_1) \Exp(i x_2)) \\
&=\rho(h_1 \Exp i x_1) h_2 \Exp i x_2).
\end{align*}

{\bf Step 4:} It remains to prove that for $v\in\cHy(\pi)$ the map
 $\rho^v:S\ra\cH$, $\rho^v(s):=\rho(s)v$ is smooth. In view of 
Lemma~\ref{L:diffexpcone}, this map is $C^1$. 
For $x\in\fh$ we have by Remark~\ref{rem:a.9} 
and \eqref{E:mult}
\[ T_s(\rho^v)(s.x)
:=\df{t}\rho(s\exp tx)v=\df{t}\rho(s)\rho(\exp tx)v=\rho(s)\dpi(x)v.\] 
Similarly we have for $x\in W$
$$T_s(\rho^v)(s.(ix)):=\df{t}\rho(s\Exp tix)v
=\df{t}\rho(s)\rho(\Exp tix)v=\rho(s)\dpi(ix)v.$$
Since  $T_s(\rho^v)$ linear, it follows that
\begin{equation}\label{E:derrho}
T_s(\rho^v)(s.x)=\rho(s)\dpi(x)v\quad \text{for}\ x\in\fh+i\fq.
\end{equation}
Now an easy induction shows that 
the higher 
partial derivatives of $T\rho^v$ only involve the continuous $n$-linear maps
\[ \omega_v^n(x_1,\dots,x_n):=\dpi(x_1)\dots\dpi(x_n)v,\]
and hence that $\rho^v$ is smooth.
\end{proof}

Now recall the context of 
the L\"uscher--Mack Theorem. 
We have a symmetric Banach--Lie algebra $\fg=\fh\oplus\fq$ and 
an integrable $e^{\ad\fh}$-invariant open convex cone $W \subeq \fq$. We
therefore have a Banach--Olshanski semigroup $S_{H}(W)=H\Exp W$ for each
connected Lie group with Lie algebra $\fh$. Applying the preceding theorem
to $\fg_c=\fh+i\fq$ and $-iW\subseteq i\fq$, we obtain the following
converse to the L\"uscher--Mack Theorem:

\begin{cor} \label{C:conv} 
Let $\fg=\fh\oplus\fq$ be a symmetric Banach--Lie algebra and 
$W$ be an integrable $e^{\ad\fh}$-invariant open convex cone in $\fq$.
Let $G_c$ be a Banach--Lie group with Lie 
algebra $\fg_c=\fh+i\fq\subseteq \fg_\bC$ and let $H_c$ be its integral subgroup
with Lie algebra $\fh$. Let $\pi$
be an $-iW$-semibounded unitary representation of $G_c$. Then 
\[ \rho(h\Exp x):=\pi(h)e^{\odpi(x)}\ \text{for}\ h\in H_c\ \text{and}
\ x\in W,\] 
defines a strongly continuous smooth $*$-representation $\rho$ of 
$S_{H_c}(W)$. 
\end{cor}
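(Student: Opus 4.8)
The plan is to deduce Corollary~\ref{C:conv} directly from Theorem~\ref{T:Analcont} by an appropriate relabeling of the data. First I would set up the symmetric Banach--Lie algebra $\g_c = \fh \oplus i\fq$ in the role of ``$\g$'' from the theorem: here $\fh$ is a closed subalgebra of $\g_c$ which is complemented (its complement being $i\fq$), and I need to check that the cone in question is $\fh$-compatible and integrable as a cone in $i\fq$. Concretely, one takes $-iW \subeq i\fq$; since $W \subeq \fq$ is an $e^{\ad\fh}$-invariant open convex cone with $[W,W] \subeq \fh$ (this last inclusion holding because $[\fq,\fq] \subeq \fh$ in a symmetric Lie algebra), the cone $-iW$ is $\fh$-compatible in $\g_c$ in the sense of Definition~\ref{D:5.3}: indeed $[-iW,-iW] = -[W,W] \subeq \fh$ and $e^{\ad\fh}(-iW) = -iW$. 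Moreover, integrability of $W$ (as assumed) furnishes exactly the symmetric Banach--Lie group $(G_c,\theta_c)$ together with the polar decomposition $S_{H_c}(W) = H_c \exp(i \cdot (-iW)) = H_c \Exp(W)$, so that the Olshanski semigroup appearing in the theorem, namely $S_H(i \cdot (-iW))$, is precisely $S_{H_c}(W)$.

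Second I would apply Theorem~\ref{T:Analcont} verbatim with $G \rightsquigarrow G_c$, $\g \rightsquigarrow \g_c$, $\fh \rightsquigarrow \fh$, $H \rightsquigarrow H_c = \lan \exp_{G_c}\fh\ran$, and the cone $W \rightsquigarrow -iW$. The $W$-semiboundedness hypothesis of the theorem, phrased for the cone $-iW$, reads: $s_\pi(-iW) \subeq \R$ and $s_\pi \: -iW \to \R$ is locally bounded; but this is exactly the hypothesis that $\pi$ is $-iW$-semibounded, which is what the corollary assumes. Thus the theorem yields that
\[ \tilde\rho(h \Exp(iy)) := \pi(h) e^{i\odpi(y)} \qquad \text{for } h \in H_c,\ y \in -iW \]
defines a strongly continuous smooth $*$-representation of $S_{H_c}(-i\cdot(-iW)) = S_{H_c}(W)$. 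Substituting $y = -ix$ with $x \in W$ turns the defining formula into $\tilde\rho(h\Exp x) = \pi(h)e^{i\odpi(-ix)} = \pi(h)e^{\odpi(x)}$, which is exactly the formula $\rho$ in the statement of the corollary. Hence $\rho = \tilde\rho$ is the desired strongly continuous smooth $*$-representation of $S_{H_c}(W)$.

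The step that needs the most care is the bookkeeping of the involution and the identification of the relevant fixed-point group. One must verify that the involution $\theta_c$ on $G_c$ produced by the integrability assumption on $W$ is compatible with the involution $\theta(x+iy) = x - iy$ on $\g_c$ appearing in Theorem~\ref{T:Analcont}, so that $(h\Exp x)^* = (\Exp x)h^{-1}$ matches $s^* = \theta_c(s)^{-1}$ and the two notions of ``$S_{H_c}(W)$'' genuinely coincide; here I would invoke the discussion of Banach--Olshanski semigroups in Appendix~\ref{A} (in particular the polar form \eqref{eq:invol} of the involution) and the fact that $H_c$ is by construction the integral subgroup for $\fh$, hence locally isomorphic to the fixed-point group $(G_c^{\theta_c})_0$. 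No genuinely new estimates are required: the entire content of the corollary has already been established in Theorem~\ref{T:Analcont}, and what remains is to confirm that the hypotheses translate correctly and that the cone $-iW$ is $\fh$-compatible and integrable, which follows from the symmetric structure $[\fq,\fq]\subseteq\fh$ together with the assumed integrability of $W$.
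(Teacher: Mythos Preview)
Your proposal is correct and follows exactly the paper's approach: the text preceding the corollary says ``Applying the preceding theorem to $\fg_c=\fh+i\fq$ and $-iW\subseteq i\fq$, we obtain the following converse,'' and you have simply spelled out the bookkeeping (verifying $\fh$-compatibility and integrability of $-iW$, matching the formulas) that this one-line application entails. One cosmetic point: the notion of $\fh$-compatibility is introduced in the unnumbered definition just before Theorem~\ref{T:Analcont}, not in Definition~\ref{D:5.3}.
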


\begin{theo}[Holomorphic Extension Theorem]
Let $G$ be a Banach-Lie group with Lie 
algebra $\fg$, $(\pi, \cH)$ be a semibounded unitary representation 
of $G$, and $W\subseteq W_\pi$ be an open integrable $\Ad(G)$-invariant 
convex cone. Then 
\[ \rho(g\Exp ix)
:=\pi(g)e^{i\odpi(x)}\ \text{for}\ g\in G\ \text{and}\ x\in W,\] 
defines a holomorphic $*$-representation $\rho$ of 
the complex involutive semigroup 
\[ S_G(iW)=G\exp iW.\]
 In particular the vectors in $\rho(S_G(iW))\cH$ are 
analytic for $\pi$.
\end{theo}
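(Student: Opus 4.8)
The strategy is to reduce the Holomorphic Extension Theorem to the real case already established in Theorem~\ref{T:Analcont}, applied to the symmetric Banach--Lie group obtained by doubling. Concretely, I would consider the Banach--Lie group $\hat G := G \times G$ with the flip involution $\theta(g_1,g_2) := (g_2,g_1)$, so that $\hat\fh = \{(x,x) \: x \in \fg\} \cong \fg$ and $\hat\fq = \{(x,-x) \: x \in \fg\} \cong \fg$, and the complexified algebra $\hat\fg_c = \hat\fh + i\hat\fq$ is isomorphic to $\fg_\bC$. Under this identification the cone $i W \subeq i\hat\fq$ corresponds to the complex Olshanski datum, and $S_G(iW) = G\exp(iW)$ is exactly the semigroup $S_{\hat H}(iW)$ attached to $\hat\fg_c$. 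One then needs that $W$, being open, $\Ad(G)$-invariant and integrable, is $\hat\fh$-compatible in the sense of the preceding definition (here $[W,W] \subeq \fg = \hat\fh$ is automatic and $e^{\ad \hat\fh}W \subeq W$ is the $\Ad(G)$-invariance), and that the representation $\pi$, viewed on $\hat\fg_c \cong \fg_\bC$ via the diagonal, is $iW$-semibounded: this is precisely the hypothesis $W \subeq W_\pi$, since $s_\pi$ is locally bounded on $W_\pi$.

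Granting this, Theorem~\ref{T:Analcont} directly yields a strongly continuous smooth $*$-representation $\rho$ of $S_G(iW) = G\exp(iW)$ with $\rho(g\exp ix) = \pi(g)e^{i\odpi(x)}$. The remaining work is to upgrade ``strongly continuous smooth'' to ``holomorphic''. For this I would first record that $S_G(iW)$ is a complex Olshanski semigroup: its manifold structure is that of the open subset $G \cdot \exp(iW) \subeq G_\bC$ (or the abstract model built in Appendix~\ref{A}), and multiplication is holomorphic. The key analytic input is Lemma~\ref{L:diffexpcone} together with \eqref{E:comagain}: for $v \in \cH^\infty(\pi)$ the orbit map $x \mapsto e^{i\odpi(x)}v$ on $W$ extends, via the Schwarz reflection / analytic continuation argument already used there, to a $\cH$-valued map on a complex neighborhood, and since $\cH^\infty(\pi)$ is dense and $\rho$ is locally bounded, weak holomorphy on a dense set plus local boundedness gives holomorphy of all orbit maps $\rho^v$, $v \in \cH$ (a standard Banach-space argument, e.g.\ via Morera applied to matrix coefficients $\langle \rho(\cdot)v,w\rangle$, which are holomorphic because the power-series expansion of $t \mapsto e^{it\odpi(x)}v$ in Lemma~\ref{L:diffexpcone} converges on a sector governed by the semiboundedness bound $s_\pi$).

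I expect the main obstacle to be the holomorphy of the matrix coefficients $s \mapsto \langle \rho(s)v,w\rangle$ near boundary points of $S_G(iW)$, i.e.\ near $\exp(iW) \cdot \exp(iW')$ where the two cone directions interact; here one cannot simply multiply two one-parameter analytic continuations, and the cleanest route is to use the already-proved representation property $\rho(s_1 s_2) = \rho(s_1)\rho(s_2)$ (Steps 1--3 of the proof of Theorem~\ref{T:Analcont}) to factor a general $s \in S_G(iW)$ through products and reduce to the one-parameter holomorphy, combined with the joint continuity of multiplication $S_G(iW) \times S_G(iW) \to S_G(iW)$ and of $\rho$. Once holomorphy is established, the final clause is immediate: for $s = g\exp(ix) \in S_G(iW)$ and $v \in \cH$, the orbit map $G \to \cH$, $g' \mapsto \pi(g')\rho(s)v = \rho(g's)v$ is the restriction to $G$ of the holomorphic map $\rho^v$ on a complex neighborhood of $G \cdot s$ inside $S_G(iW)$, hence $\pi$-analytic; so $\rho(S_G(iW))\cH$ consists of analytic vectors for $\pi$, and the $*$-property follows as in Step~2 of Theorem~\ref{T:Analcont} from $e^{i\odpi(x)*} = e^{-i\odpi(x)}$ together with $(g\exp ix)^* = \exp(ix)g^{-1}$ and the identity $e^{i\odpi(\Ad(g)x)} = \pi(g)e^{i\odpi(x)}\pi(g)^{-1}$.
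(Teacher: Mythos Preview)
Your overall architecture is right: apply Theorem~\ref{T:Analcont} to obtain the smooth $*$-representation, then establish holomorphy, then deduce analyticity of the vectors $\rho(S_G(iW))\cH$. But two points deserve correction.

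First, the doubling $\hat G = G\times G$ is an unnecessary detour and, as you set it up, does not match the hypotheses of Theorem~\ref{T:Analcont}: that theorem requires $\pi$ to be a representation of the ambient group, which in your scheme would be $G\times G$, not $G$. The paper instead applies Theorem~\ref{T:Analcont} \emph{directly} with $\fh=\fg$ and $H=G$. Since $W$ is open in $\fg$ one has $\fq = W-W = \fg$, so $[W,W]\subeq\fg=\fh$ is automatic, $e^{\ad\fh}W\subeq W$ is the assumed $\Ad(G)$-invariance, and $\fg_c=\fh+i\fq=\fg_\bC$. No symmetric-pair packaging is needed.

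Second, your holomorphy argument is far more elaborate than necessary, and the ``main obstacle'' you anticipate does not arise. The paper observes that formula \eqref{E:derrho}, namely $T_s(\rho^v)(s.x)=\rho(s)\dpi(x)v$ for $x\in\fh+i\fq=\fg_\bC$, exhibits $T_s(\rho^v)$ as $\bC$-linear in $x$; hence $\rho^v$ is holomorphic for every $v\in\cH^\infty$, and then \cite[Lemma~IV.2.2]{Ne00} gives holomorphy of $\rho$ itself. There is no need for Morera-type arguments on matrix coefficients, and multiplicativity (already secured by Theorem~\ref{T:Analcont}) plays no role in the holomorphy step beyond providing \eqref{E:derrho}. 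Your final argument for analyticity of $\rho(S_G(iW))\cH$ via $\pi(g')\rho(s)v=\rho(g's)v$ and analyticity of $g'\mapsto g's$ is exactly what the paper does.
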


Note that $S_G(iW)$ is a complex Olshanski semigroup 
(cf.\ Definition~\ref{def:olshsem}). 

\begin{proof} First we observe that the cone 
$W$ is $\fg$-compatible. 
Theorem~\ref{T:Analcont} now applies to 
the semigroup $S_G(iW)$. 
It remains to prove that $\rho:S_G(iW)\ra B(\cH)$ is holomorphic. 
But \eqref{E:derrho} shows that $T\rho^v$ is complex linear, hence
that $\rho^v$ is holomorphic. The holomorphy of $\rho$ now follows from 
{\bf \cite[Lemma~IV.2.2]{Ne00}}. 
Now let $s\in S_G(iW)$ and $v\in\cH$. 
Since $\pi^{\rho(s)v}(h)=\pi(h)\rho(s)v=\rho(hs)v$, the analyticity of
$\pi^{\rho(s)v}$ follows from the analyticity of the map $H\ra S$, $h\mt hs$. 
\end{proof}

The preceding theorem generalizes Olshanski's Holomorphic Extension Theorem 
for highest weight representations (\cite{Ol82}, \cite{Ne00}) 
to the Banach--Lie setting. 
In the finite dimensional
case the proof heavily relies on the existence of a dense space of 
analytic vectors, which can be derived  
by convolution with heat kernels (\cite{Ga60}), 
but for unitary representations of 
Banach--Lie groups, not even the space of 
$C^1$-vectors need to be dense (cf.\ \cite{Ne10b}).
In the finite dimensional context 
one proves first that $\rho$ is holomorphic, and then the multiplicativity
of $\rho$ is obtained by analytic continuation. 
In the proof we give here the multiplicativity of $\rho$ follows from 
the (assumed) existence of smooth vectors 
and then the holomorphy of $\rho$ follows as a bonus from 
its multiplicativity.

\appendix

\section{Covering theory for Olshanski semigroups}\label{A}

Let $(G,\theta)$ be a symmetric Banach--Lie group and 
$(\fg,\theta)$ the corresponding 
symmetric Lie algebra. We write 
\[ \fg=\fh\oplus \fq \quad \mbox{ with } 
\quad \fh = \ker(\theta - \1) \quad \mbox{ and } 
\quad \fq = \ker(\theta + \1),\]
for the eigenspace decomposition of $\fg$ under $\theta$ 
and let $H_G$ denote the identity component of $G^\theta$. 
We set $g^*=\theta(g)^{-1}$  
and consider an open convex $\Ad(H_G)$-invariant 
convex cone $W \subeq \fq$ 
for which the map 
\[ H_G \times W \to G, \quad (h,x) \mapsto h\exp x \] 
is an analytic diffeomorphism onto an open subsemigroup $S = H \exp(W)$ 
of $G$.  In these coordinates the involution on  $h \exp x \in S$ 
is given by 
\begin{equation}
  \label{eq:invol2}
(h \exp x)^* = (\exp x) h^{-1} = h^{-1}\exp(\Ad(h)x).
\end{equation}
In particular $S$ is $*$-invariant. 
In the following we write $S_{H_G}(W) = H_G \exp(W)$ for this involutive 
semigroup. 

\begin{de}
  \label{def:1.9} 
Let $S$ be an involutive
semigroup. A {\it multiplier} of $S$ is a pair $(\lambda, \rho)$ of maps 
$\lambda, \rho \: S \to S$ satisfying 
\[ a \lambda(b) = \rho(a)b, \qquad \lambda(ab) = \lambda(a)b, \qquad 
\mbox{ and } \quad \rho(ab) = a \rho(b) \quad \mbox{ for } \quad 
a,b \in S.\] 

We write $M(S)$ for the set of all multipliers of $S$ and turn
it into an involutive semigroup by 
\[ (\lambda, \rho) (\lambda', \rho') := (\lambda \circ \lambda', \rho'
\circ \rho)  \quad \hbox{ and } \quad 
(\lambda, \rho)^* := (\rho^*, \lambda^*), \] 
where $\lambda^*(a) := \lambda(a^*)^*$ and 
$\rho^*(a) = \rho(a^*)^*$. 
We write 
\[ \U(M(S)):= \{ (\lambda, \rho) \in M(S) \: 
(\lambda, \rho)(\lambda, \rho)^* = 
(\lambda, \rho)^*(\lambda, \rho) =\1\} \]
for the {\it unitary group} of $M(S)$. 

Note that $S \to M(S), s \mapsto (\lambda_s, \rho_s)$ is a morphism 
of involutive semigroups and that $M(S)$ acts on $S$ from the left
by $(\lambda,\rho).s := \lambda(s)$ and from the right by 
$s.(\lambda,\rho) := \rho(s)$. 
\end{de}

The group ${H_G}$ is in general 
not contained in 
$S_{H_G}(W)$, 
but it acts on it by the unitary multipliers $(\lambda_h, \rho_h)$, 
$h \in H_G$. 

\begin{pro}
Let  $q:\ti S\ra S$ be the universal covering of the Banach manifold 
$S = S_{H_G}(W)$. Then $\ti S$
carries the structure of an analytic Banach $*$-semigroup such that 
the covering map $q \: \tilde S \to S$ 
is a homomorphism of Banach $*$-semigroups. 

Moreover, the simply connected covering group $\tilde H_G$ of ${H_G}$ 
acts on $\tilde S$ by unitary multipliers and we thus obtain 
an analytic diffeomorphism 
\[  \tilde\Phi \: \tilde H_G \times W \to \tilde S, \quad 
(h,x) \mapsto h\Exp x,\] 
where $\Exp \: W \to \tilde S$ is a continuous lift of 
$\exp \: W \to S$ such that 
\[ \Exp(x)^* = \Exp(x)\quad \mbox{ for } \quad x \in W,\] 
and 
\begin{equation}
  \label{eq:Exp}
\Exp(sx)\Exp(tx) = \Exp((t+s)x) \quad \mbox{ for } \quad 
x \in W, t,s  > 0.
\end{equation}
\end{pro}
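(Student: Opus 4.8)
The plan is to bypass any abstract construction of a covering semigroup and build $\tilde S$ directly from the polar decomposition. Since $W$ is an open convex cone in a Banach space it is contractible, hence $q_{H_G}\times\mathrm{id}_W\colon\tilde H_G\times W\to H_G\times W$ followed by the analytic diffeomorphism $\Phi\colon H_G\times W\to S$ is a covering map with simply connected total space; thus it \emph{is} a universal covering of $S$, and we may take $\tilde S:=\tilde H_G\times W$, $q:=\Phi\circ(q_{H_G}\times\mathrm{id}_W)$, $\tilde\Ad:=\Ad_{H_G}\circ q_{H_G}\colon\tilde H_G\to\GL(\fg)$ (which still preserves $W$), and $\Exp\colon W\to\tilde S$, $\Exp(x):=(\1,x)$, an analytic lift of $\exp\colon W\to S$. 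We use here that the universal covering group $\tilde H_G$ of the Banach--Lie group $H_G$ exists and is again Banach--Lie, and that existence and uniqueness of lifts of maps out of simply connected, locally path-connected spaces hold for Banach manifolds.

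The only object that genuinely has to be lifted is the ``multiplication defect''
\[ \mu=(\mu_1,\mu_2)\colon W\times W\to H_G\times W,\qquad
\exp(u)\exp(v)=\Phi\big(\mu_1(u,v),\mu_2(u,v)\big),\]
which is analytic because $\exp(u)\exp(v)\in\exp(W)\exp(W)\subeq S$ and $\Phi^{-1}$ is analytic. As $W\times W$ is simply connected, $\mu_1$ has a unique analytic lift $\tilde\mu_1\colon W\times W\to\tilde H_G$ once normalised by $\tilde\mu_1(x_\ast,x_\ast)=\1$ for a fixed $x_\ast\in W$. The crucial observation is that $\mu_1\equiv\1$ on the connected set $R:=\{(u,tu)\colon u\in W,\ t>0\}$ (because $\exp(u)\exp(tu)=\exp((1+t)u)\in\exp W$), so $\tilde\mu_1|_R\equiv\1$ while $\mu_2(u,tu)=(1+t)u$; the same one-point argument on $R$ shows that $\tilde\mu_1$ inherits the equivariance $\tilde\mu_1(\tilde\Ad(k)u,\tilde\Ad(k)v)=k\,\tilde\mu_1(u,v)\,k^{-1}$ and the symmetry $\tilde\mu_1(u,v)^{-1}=\tilde\mu_1(v,u)$ from the corresponding properties of $\mu_1$. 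One then \emph{defines} on $\tilde S=\tilde H_G\times W$
\[ (h_1,x_1)(h_2,x_2):=\big(h_1h_2\,\tilde\mu_1(\tilde\Ad(h_2)^{-1}x_1,x_2),\ \mu_2(\tilde\Ad(h_2)^{-1}x_1,x_2)\big),\qquad
(h_1,x_1)^*:=\big(h_1^{-1},\tilde\Ad(h_1)x_1\big),\]
and lets $\tilde H_G$ act on the left by $k.(h_1,x_1)=(kh_1,x_1)$ and on the right by $(h_1,x_1).k=(h_1k,\tilde\Ad(k)^{-1}x_1)$. Unwinding the definition of $\mu$ shows immediately that $q$ intertwines all of these with multiplication, involution and the multiplier actions $(\lambda_h,\rho_h)$ on $S$, so $q$ is a $*$-homomorphism.

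It remains to verify the axioms. The two multiplier relations for the left and right $\tilde H_G$-actions, their compatibility $a\lambda_k(b)=\rho_k(a)b$, and $(s^*)^*=s$ follow by direct computation from the formulas above together with the equivariance of $\tilde\mu_1$. Associativity $(ab)c=a(bc)$, anti-multiplicativity $(ab)^*=b^*a^*$, and unitarity of the multipliers $(\lambda_k,\rho_k)$ are each obtained by the standard principle: both sides are analytic maps from a simply connected Banach manifold (a finite product of copies of $\tilde H_G$, $\tilde S$ and $W$) into $\tilde S$ whose composites with $q$ agree by the corresponding identity in $S$, hence they coincide once they agree at a single point, which I evaluate on the ray-diagonal $R$ where $\tilde\mu_1$ is trivial (e.g. $(\1,x_\ast)(\1,x_\ast)=(\1,2x_\ast)$, and both bracketings of $((\1,x_\ast),(\1,x_\ast),(\1,x_\ast))$ equal $(\1,3x_\ast)$). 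Finally $\Exp(x)^*=(\1,x)^*=(\1,x)=\Exp(x)$ since $\tilde\Ad(\1)=\mathrm{id}$, and $\Exp(sx)\Exp(tx)=(\tilde\mu_1(sx,tx),(s+t)x)=(\1,(s+t)x)=\Exp((s+t)x)$ because $(sx,tx)\in R$; and $\tilde\Phi(h,x):=h.\Exp(x)=(h,x)$ is visibly an analytic diffeomorphism $\tilde H_G\times W\to\tilde S$. The one genuinely delicate point — and the only real obstacle — is the normalisation of $\tilde\mu_1$: a priori each axiom would impose its own base-point constraint, but since $\mu_1$ vanishes identically on $R$ the single normalisation $\tilde\mu_1(x_\ast,x_\ast)=\1$ makes all the axioms hold at once.
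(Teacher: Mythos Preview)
Your proof is correct and follows essentially the same route as the paper. Both arguments identify $\tilde S$ with $\tilde H_G\times W$ via the polar map, lift the $H_G$-valued component of the multiplication (your $\tilde\mu_1$ is the paper's $\tilde m_{H_G}$) with a normalisation on the ray through a fixed $x_0\in W$, and then verify associativity, the involution law, and the multiplier properties by the uniqueness-of-lifts principle from simply connected domains, checking the base point on that same ray. The only difference is organisational: the paper first lifts the full multiplication $m\colon S\times S\to S$ abstractly and afterwards derives the explicit polar formula, whereas you write down the polar formula for the product at the outset and check that it covers~$m$; the verifications are otherwise the same.
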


\begin{proof} Since the polar map $\Phi \: {H_G} \times W \to S$ is an 
analytic diffeomorphism, there exists an analytic diffeomorphism 
$\tilde\Phi \: \tilde H_G \times W \to \tilde S$ with 
$q \circ \tilde\Phi = \Phi \circ q$. 
We then define $\tilde\Exp \: W \to \tilde S, x \mapsto \tilde\Phi(e,x)$. 

Pick $x_0 \in W$ and let $\tilde m \: \tilde S \times \tilde S \to \tilde S, 
(s,t) \mapsto st$ be the 
unique continuous lift of the multiplication map 
$m \: S \times S \to S$ with 
\[ \tilde\Exp(x_0)\tilde\Exp(x_0) = \tilde\Exp(2x_0).\] 
Then the uniqueness of lifts implies that the 
restriction of $\tilde m$ to 
$\tilde\Exp(\R_{>0} x_0)$ satisfies 
\[ \tilde\Exp(tx_0)\tilde\Exp(sx_0) = \tilde\Exp((t+s)x_0)\] 
and we obtain in particular 
\[ (\tilde\Exp(x_0)\tilde\Exp(x_0))\tilde\Exp(x_0) 
= \tilde\Exp(x_0)(\tilde\Exp(x_0)\tilde\Exp(x_0)).\] 
Therefore the uniqueness of lifts implies that $\tilde m$ is 
associative, hence defines on $\tilde S$ an analytic semigroup 
structure. 

We also lift the involution on $S$ to the unique involutive
diffeomorphism $*$ on $\tilde S$ with 
$\tilde\Exp(x_0)^* = \tilde\Exp(x_0)$, and 
since $(st)^* = t^*s^*$ now holds for 
$s = t = \tilde\Exp(x_0)$, the uniqueness of lifts implies that 
$(\tilde S, *)$ is an involutive semigroup. 

The multiplication on $S$ can be expressed by 
analytic maps $m_{H_G} \: W \times W \to {H_G}$ and 
$m_W \: W \times W \to W$ as 
\begin{equation}
  \label{eq:mult}
\Phi(h,x)\Phi(h',x') 
= \Phi(hh' m_{H_G}(\Ad(h')^{-1}x,x'), m_W(\Ad(h')^{-1}x,x')).
\end{equation}
From the continuity of the multiplication 
$S\times  (S \cup H_G) \to S$ in $G$, it follows that 
both maps $m_W$ and $m_H$ extends continuously to 
the set 
\[ W_2 := \big(W \times (W \cup \{0\})\big) 
 \cup \big((W \cup \{0\}) \times W\big).\] 

If $\tilde m_{H_G} \: W_2 \to \tilde H_G$ is the unique lift of $m_{H_G}$ 
satisfying $\tilde m_{H_G}(x_0, x_0) = e$, then we obtain the formula 
\begin{equation}
  \label{eq:pro}
\tilde\Phi(h,x)\tilde\Phi(h',x') 
= \tilde\Phi(hh' \tilde m_{H_G}(\Ad(h')^{-1}x,x'), m_W(\Ad(h')^{-1}x,x')).
\end{equation}
For each $x \in W$ and $t,s > 0$ we further have 
$\tilde m_{H_G}(sx,tx) = e$ because \break 
$m_{H_G}(sx,tx) = e$ and the subset 
$\{ (sx,tx) \in W \times W \: s,t>0, x \in W \}$ is connected. 
This implies \eqref{eq:Exp}. 

The left and right multiplier actions of ${H_G}$ on $S$ lift 
to unique left and right actions of $\tilde H_G$ on $\tilde S$, 
satisfying 
\begin{equation}
  \label{eq:hact}
h\tilde\Phi(h',x) = \tilde\Phi(hh',x) '\quad \mbox{ and } \quad 
\tilde\Phi(h',x)h = \tilde\Phi(h'h,\Ad(h)^{-1}x).
\end{equation}
From \eqref{eq:hact} we further derive 
\begin{equation}
  \label{eq:mult2}
(h\tilde\Exp x)(h'\tilde\Exp x') = hh'\tilde\Exp(\Ad(h')^{-1}x)\tilde\Exp(x').
\end{equation}
This implies in particular that the left action of 
$\tilde H_G$ on $\tilde S$ commutes with the right multiplications. 
We also obtain from the uniqueness of lifts that  
\[ (h\tilde\Exp x)^* = h^{-1}\tilde\Exp(\Ad(h)x)= \tilde\Exp(x)h^{-1} 
\quad \mbox{ for } \quad x \in W, h \in \tilde H_G, \]  
so that 
left multiplications in $\tilde S$ commute with the right action of 
$\tilde H_G$. 
To see that $\tilde H_G$ acts on $\tilde S$ by unitary multipliers, 
it remains to observe that 
\[ (h'\tilde\Exp(x')h)(h''\tilde\Exp(x'')) 
=  (h'\tilde\Exp(x'))(hh''\tilde\Exp(x''))\] 
for $h,h',h'' \in \tilde H_G, x,x'\in W$, 
which also follows from \eqref{eq:mult2}.
\end{proof}

Let $\Ad_\fq^{\tilde H_G} := \Ad_\fq \circ q_{H_G}$ be the action of 
$\tilde H_G$ on $\fq$, obtained from the action $\Ad_\fq$ of ${H_G}$ on 
$\fq$ and the covering map $q_{H_G} \: \tilde H_G \to {H_G}$. 

\begin{pro} For a discrete central subgroup $\Gamma \subeq \tilde H_G$ 
acting trivially on $\fq$, the cosets in $\tilde S$ satisfy 
\begin{equation}
(s\Gamma)(t\Gamma) = st\Gamma \quad \mbox{ and } \quad 
(s\Gamma)^* = s^*\Gamma \quad \mbox{ for } \quad 
s,t \in \tilde S,
  \label{eq:congru}
\end{equation}
so that the quotient semigroup $\tilde S/\Gamma$ 
inherits the structure of an 
analytic involutive Banach semigroup for which the quotient map 
$q_\Gamma \: \tilde S \to \tilde S/\Gamma$ is a morphism of 
involutive semigroups. Moreover, 
the polar map $\tilde\Phi \: \tilde H_G \times W \to \tilde S$ 
factors through a diffeomorphism 
$\Phi_\Gamma \: \tilde H_G/\Gamma \times W \to \tilde S/\Gamma$ 
and the group $\tilde H_G/\Gamma$ acts faithfully on $\tilde S/\Gamma$ 
by unitary multipliers. 
\end{pro}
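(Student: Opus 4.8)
The plan is to carry everything through the polar diffeomorphism $\tilde\Phi \: \tilde H_G \times W \to \tilde S$ and to reduce the statement to formal manipulations with \eqref{eq:hact}, \eqref{eq:mult2} and the multiplier axioms of Definition~\ref{def:1.9}.

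First I would check that the left and right multiplier actions of $\Gamma$ on $\tilde S$ coincide, so that the coset $s\Gamma$ is unambiguous. For $\gamma \in \Gamma$ and $s = \tilde\Phi(h,x)$, the relations \eqref{eq:hact} give $\gamma\tilde\Phi(h,x) = \tilde\Phi(\gamma h, x)$ and $\tilde\Phi(h,x)\gamma = \tilde\Phi(h\gamma, \Ad(\gamma)^{-1}x) = \tilde\Phi(\gamma h, x)$, using that $\gamma$ is central in $\tilde H_G$ and acts trivially on $\fq$. Hence, in polar coordinates, $\Gamma$ acts on $\tilde S \cong \tilde H_G \times W$ by left translation on the first factor and trivially on the second. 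As $\Gamma$ is discrete this action is free and properly discontinuous, so $\tilde S/\Gamma$ is a Banach manifold, $q_\Gamma$ is a covering map --- in particular a local analytic diffeomorphism --- and $\tilde\Phi$ descends to an analytic diffeomorphism $\Phi_\Gamma \: (\tilde H_G/\Gamma) \times W \to \tilde S/\Gamma$. This settles the assertion about the polar decomposition.

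Next I would verify the congruence relations \eqref{eq:congru}. For multiplicativity, combining the multiplier identity $a(\gamma . b) = (a . \gamma)b$ with the coincidence of the two $\Gamma$-actions and the compatibility of the left $\Gamma$-action with left multiplications gives, for $\gamma_1,\gamma_2 \in \Gamma$, the identity $(\gamma_1 . s)(\gamma_2 . t) = (\gamma_1\gamma_2) . (st)$; the same identity is visible directly from \eqref{eq:mult2}, where the $\tilde H_G$-components multiply with the central factors $\gamma_i$ pulled out and the $W$-components are unaffected. Since $\Gamma$ is a group this yields $(s\Gamma)(t\Gamma) = (st)\Gamma$. For the involution, \eqref{eq:hact} together with $(h\tilde\Exp x)^* = h^{-1}\tilde\Exp(\Ad(h)x)$ gives $(\gamma . s)^* = \gamma^{-1} . s^*$, hence $(s\Gamma)^* = s^*\Gamma$. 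These two relations say precisely that multiplication and involution on $\tilde S$ respect the equivalence relation given by the $\Gamma$-orbits, so they descend to well-defined maps on $\tilde S/\Gamma$; their analyticity is automatic because $q_\Gamma$ is a local analytic diffeomorphism. Thus $\tilde S/\Gamma$ becomes an analytic involutive Banach semigroup, and $q_\Gamma$ is a morphism of involutive semigroups by construction.

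Finally, for the multiplier action of $\tilde H_G/\Gamma$: centrality of $\Gamma$ implies that the left (resp.\ right) multiplier action of $\tilde H_G$ on $\tilde S$ commutes with the $\Gamma$-action, so it descends to a left (resp.\ right) action of $\tilde H_G/\Gamma$ on $\tilde S/\Gamma$; the multiplier identities and the unitarity relations survive the quotient, being equalities of maps to which $q_\Gamma$ may be applied. Faithfulness follows by testing on $s = \tilde\Exp x$ with $x \in W$: if $h \in \tilde H_G$ acts trivially on $\tilde S/\Gamma$ then $h\tilde\Exp x = \tilde\Phi(h,x)$ lies in $(\tilde\Exp x)\Gamma = \{\tilde\Phi(\gamma,x) : \gamma \in \Gamma\}$, and injectivity of $\tilde\Phi$ in the first argument forces $h \in \Gamma$. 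I do not expect a serious obstacle: the only two points that need care are the coincidence of the left and right $\Gamma$-cosets --- which is exactly where both hypotheses on $\Gamma$ (centrality in $\tilde H_G$ and triviality on $\fq$) enter --- and the routine check that the discrete $\Gamma$-action is properly discontinuous, so that $\tilde S/\Gamma$ inherits a compatible analytic Banach-manifold structure and $q_\Gamma$ is a covering.
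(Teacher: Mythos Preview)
Your proof is correct and follows the same approach as the paper's own argument, which is extremely terse: the paper merely observes that the left and right $\Gamma$-actions on $\tilde S$ coincide (via \eqref{eq:hact}) and declares the congruence relations and the remaining assertions ``obvious''. You have faithfully unpacked precisely these steps---the coincidence of the two actions (using centrality and triviality on $\fq$), the resulting congruence for multiplication and involution, the descent of the manifold structure via the properly discontinuous action, and the faithfulness check---so there is nothing to add.
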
 

\begin{proof} Since the left action of $\Gamma$ on $\tilde S$ coincides 
with the right action (see \eqref{eq:hact}), 
the relations \eqref{eq:congru} easily follow. 
The remaining assertions are now obvious.
\end{proof}

\begin{de} \mlabel{def:olshsem} 
(a) The semigroups obtained by the preceding proposition
will be called Banach--Olshanski semigroups. We write 
\[ S_{H}(W):=\ti S_{H_G}(W)/\Gamma 
\quad \mbox{ for } \quad 
H = \tilde H_G/\Gamma\]  
and $\exp x := \tilde\exp(x) \cdot\Gamma$ for the exponential 
function $\exp \: W \to S_H(W)$. 

(b) For the special case where 
$\g =\fh_\C$ and $\theta(x + iy) = x-iy$, the group $G$ is complex, 
so that $\Gamma_{H_G}(W)$ is a complex manifold on which 
the multiplication is holomorphic and the involution is 
antiholomorphic. These properties are inherited by all other 
Olshanski semigroups $S_H(W)$, where $H \cong \tilde H_G/\Gamma$ 
is a connected Lie group with Lie algebra $\fh$. 
Therefore we call them {\it complex Olshanski semigroups}. 
\end{de}

\begin{rem} \mlabel{rem:a.5} The basic properties of Olshanski semigroups 
$S_H(W)$ are: 
\begin{description}
\item[\rm(O1)] The polar map 
$H \times W \to S_H(W), (h,x) \mapsto h \Exp x$ is an analytic diffeomorphism.
\item[\rm(O2)] $H$ acts on $S_H(W)$ smoothly by unitary multipliers. 
\item[\rm(O3)] For $x \in W$, we have 
\[ \exp(sx)\exp(tx) = \exp((t+s)x)\quad \mbox{ for } \quad t,s > 0.\]
\item[\rm(O4)] For $h \in H$ and $s \in W$, we have 
$(h\exp x)^* = (\exp x)h$. 
\end{description}
\end{rem}

From now on $H$ always denotes a connected Lie group with 
Lie algebra $\fh$ and $S = S_H(W)$ is a corresponding 
Olshanski semigroup. We do not assume that $H$ is contained in $G$. 

\begin{lem} \mlabel{lem:approx} 
For $s \in S_H(W)$ and $x \in W$ we have
\[ \lim_{t \to 0_+} \Exp(tx)s =  \lim_{t \to 0_+} s\Exp(tx) = s.\] 
\end{lem}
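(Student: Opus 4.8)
The plan is to reduce the assertion to the universal covering semigroup $\tilde S := \tilde S_{H_G}(W)$, where the polar map $\tilde\Phi \: \tilde H_G \times W \to \tilde S$ and the continuous extensions of the multiplication maps $m_W$, $\tilde m_{H_G}$ to $W_2$ are available, and then push the result down to $S_H(W) = \tilde S/\Gamma$ via the continuous semigroup morphism $q_\Gamma \: \tilde S \to S_H(W)$. Since $q_\Gamma(\tilde\Exp(tx)) = \Exp(tx)$ and $q_\Gamma$ is continuous, it suffices to prove $\lim_{t\to 0_+}\tilde\Exp(tx)\tilde s = \tilde s$ in $\tilde S$ for any lift $\tilde s$ of $s$. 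Moreover, the statement about right multiplication follows from the one about left multiplication: by (O4) of Remark~\ref{rem:a.5} we have $\Exp(tx)^* = \Exp(tx)$ for $t > 0$ (as $tx \in W$), so $(s\Exp(tx))^* = \Exp(tx)s^*$; since $*$ is a homeomorphism of $S_H(W)$ and $s^* \in S_H(W)$, the convergence $\Exp(tx)s^* \to s^*$ gives $s\Exp(tx) \to s$. So I concentrate on showing $\lim_{t\to 0_+}\Exp(tx)s = s$.

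Write $\tilde s = \tilde\Phi(h,y) = h\tilde\Exp(y)$ with $h \in \tilde H_G$ and $y \in W$. Combining the multiplication formula \eqref{eq:mult2} (with $h_1 = e$, $x_1 = tx$, $h_2 = h$, $x_2 = y$), the product formula \eqref{eq:pro}, and the left-multiplier action from \eqref{eq:hact}, one obtains
\[ \tilde\Exp(tx)\,\tilde s = \tilde\Phi\big(h\cdot\tilde m_{H_G}(z_t,y),\ m_W(z_t,y)\big), \qquad z_t := \Ad(h)^{-1}(tx) \in W. \]
As $t \to 0_+$ we have $z_t \to 0$. The maps $m_W$ and $\tilde m_{H_G}$ extend continuously to $W_2 \supseteq \{0\}\times W$, and continuity of the multiplication $S_{H_G}(W) \times (S_{H_G}(W)\cup H_G) \to S_{H_G}(W)$ in $G$ forces $m_W(0,y) = y$: indeed $\exp(z)\exp(y) \to \exp(y) = \Phi(e,y)$ in $G$ as $z \to 0$, and $\Phi$ is a homeomorphism onto an open subsemigroup. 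Granting in addition $\tilde m_{H_G}(0,y) = e$, continuity of $\tilde\Phi$ then yields $\tilde\Exp(tx)\,\tilde s \to \tilde\Phi(h,y) = \tilde s$, which is what we need.

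The one point that requires an argument — and the expected main obstacle — is the identity $\tilde m_{H_G}(0,y) = e$ for all $y \in W$; this is where I would use the connectedness trick already employed in the appendix. By the same reasoning as for $m_W(0,y) = y$, the continuity of multiplication in $G$ gives $m_{H_G}(0,w) = e$ for every $w \in W$ (since $\exp(z)\exp(w) \to 1_G\exp(w)$). Fixing the base point $x_0 \in W$, property (O3) of Remark~\ref{rem:a.5} gives $\exp(sx_0)\exp(x_0) = \exp((s+1)x_0)$, hence $m_{H_G}(sx_0,x_0) = e$ for $s > 0$ and, by continuity, for $s = 0$. Thus $m_{H_G}$ equals $e$ identically on the connected subset $C := \big(\{0\}\times W\big) \cup \big(\R_{\geq 0}x_0 \times \{x_0\}\big)$ of $W_2$, which contains the base point $(x_0,x_0)$. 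Since $\tilde m_{H_G}$ is a continuous lift of $m_{H_G}$ through the covering $q_{H_G} \: \tilde H_G \to H_G$ with $\tilde m_{H_G}(x_0,x_0) = e$, and $C$ is connected, the uniqueness of lifts forces $\tilde m_{H_G}|_C \equiv e$; in particular $\tilde m_{H_G}(0,y) = e$. Combining the three steps proves the left-multiplication statement, and with it the lemma.
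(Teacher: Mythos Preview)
Your proof is correct and follows essentially the same route as the paper: reduce to the simply connected covering $\tilde S$, use the polar-coordinate product formula \eqref{eq:pro} together with the continuous extensions of $m_W$ and $\tilde m_{H_G}$ to $W_2$, and obtain the second limit from the first by applying the involution. The only difference is that you spell out, via a connectedness argument in the spirit of the one already used in the appendix, why $\tilde m_{H_G}(0,y)=e$ and $m_W(0,y)=y$, whereas the paper leaves this implicit in the phrase ``formula \eqref{eq:pro} yields $\Exp(tx)s\to s$.''
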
 

\begin{prf} It suffices to verify this relation in the simply 
connected covering semigroup $\tilde S$, where we have for 
$s = h'\Exp(x')$: 
\[ \Exp(tx)s 
= \tilde\Phi(e,tx)\tilde\Phi(h',x') 
= \tilde\Phi(h' \tilde m_H(\Ad(h')^{-1}tx,x'), m_W(\Ad(h')^{-1}tx,x')).\]  

Since the functions $\tilde m_H$ and $m_W$ extend continuously to the 
domain $W_2$, formula \eqref{eq:pro} yields 
$\Exp(tx)s \to s$. 
The other relation is obtained by applying the involution~$*$.
\end{prf}

\begin{rem} Let $\kappa^r \in \Omega^1(G,\fg)$ denote the {\it right 
Maurer--Cartan form}, defined by 
$\kappa^r_g(x.g) := x$ for $x \in \g = T_e(G)$, 
where $TG \times G \to TG, (v,g) \mapsto v.g$ denotes the 
canonical right action of $G$ on $TG$. Similarly we define the 
{\it left Maurer--Cartan form} by $\kappa^l_g(g.x) := x$. 

For the subsemigroup $S = S_{H_G}(W) \subeq G$, 
the restriction $\kappa^r_S := \kappa^r\res_S$ defines a 
trivialization of the tangent bundle of $S$ by 
\[ TS \to S \times \g, \quad v_s \mapsto (s,\kappa^r_S(v_s)) \quad 
\mbox{ for } \quad v_s \in T_s(S). \] 
If $q_S \: \tilde S \to S$ is the universal covering, the 
form 
$\kappa^r_{\tilde S} := q_S^*\kappa^r_S$ likewise trivializes $T(\tilde S)$. 
For every discrete central subgroup $\Gamma \subeq \tilde H_G$ acting 
trivially on $\fq$ and $S = S_{H_G}(W)$, the form $\kappa^r_{\tilde S}$ on $\tilde S$ 
is $\Gamma$-invariant, hence is the pullback of a form 
$\kappa^r_{\tilde S/\Gamma}$ trivializing $T(\tilde S/\Gamma)$. 
\end{rem}

\begin{de} \label{D:tangiso} The preceding discussion shows that on every 
Olshanski semigroup $S_H(W)$, we have a natural form 
$\kappa^r_S \in \Omega^1(S,\fg)$ trivializing the tangent bundle 
and we similarly obtain a left invariant form 
$\kappa^l_S$. 

Accordingly, we have natural {\it left invariant vector fields} 
$V_x$, $x \in \g$, on $S$, defined by $\kappa_S^l(V_x) = x$. 
and {\it right invariant vector fields} 
$W_x$, $x \in \g$, defined by $\kappa_S^r(W_x) = x$. 
\end{de}

\begin{rem}\mlabel{rem:a.9} For $x \in \fh\cup W$ and $s \in S$ we have 
\[ V_x(s) = \derat0 s\exp(tx) 
\quad \mbox{ and } \quad 
W_x(s) = \derat0 \exp(tx)s.\] 
Both relations are obvious for the semigroup 
$S_{H_G}(W)$, and they are inherited by the simply connected 
covering and hence also by its quotients. 

For $x \in \g$ and $s \in S$, we write 
$t \mapsto s\exp(tx)$ for the integral curve of 
$V_x$ through $s$ and likewise 
$t \mapsto \exp(tx)s$ for the local integral curve 
of $W_x$. This is redundant for $S = S_{H_G}(W) \subeq G$, and 
for a general $S = S_H(W)$, the preceding observation shows that 
it is also consistent for 
$x \in W\cup \fh$ with the action of the corresponding 
one-parameter (semi)groups.
\end{rem}

\begin{rem} \mlabel{rem:rightinv} 
(a) From the right invariance of $\kappa^r_G$ on $G$, 
we obtain 
\[ \rho_s^*\kappa^r_S = \kappa^r_S \quad \mbox{ for } \quad  s \in S = S_H(W)\] 
by verifying that this property is preserved by the passage 
to covering semigroups and to quotients by discrete central 
subgroups. We likewise get 
\[ \lambda_s^*\kappa^l_S = \kappa^l_S \quad \mbox{ for } \quad  s \in S.\] 

(b) As in (a), it follows that, for $h \in H$, 
the right multiplication $\rho_h \: S \to S$ 
also leaves $\kappa^r_S$ invariant. For the left multiplication 
$\lambda_h(s) = h.s$, the relation 
$\lambda_g^*\kappa^r_G = \Ad(g)\circ \kappa^r_G$ for the Maurer--Cartan 
form of a Lie group $G$ implies that 
\begin{equation}\label{eq:trafo}
\lambda_h^*\kappa^r_S = \Ad(h)\circ \kappa^r_S\quad \mbox{ for } \quad 
h \in H.
\end{equation}
This is also verified by the passage through the universal covering 
semigroup. 
\end{rem}

\begin{de} \mlabel{def:logder} 
For a smooth map $f \: M \to S = S_H(W)$, where 
$M$ is a smooth manifold, we define the (right) 
logarithmic derivative as 
the $\g$-valued $1$-form 
\[ \delta(f) := f^*\kappa_S^r \in \Omega^1(M,\g).\] 
If $I \subeq \R$ is an interval and 
$\alpha \: I \to S$ a differentiable path, then 
the identification of $1$-forms on $I$ with $\g$-valued functions leads to 
\[ \delta(\alpha)_t := \kappa^r_{\alpha(t)}(\alpha'(t)) \in \g. \] 
We likewise define logarithmic derivatives for maps with values 
in Lie groups. 
\end{de}

For the exponential map $\exp \: \g \to G$ we then have 
\[ \delta(\exp)_x(y) =  \int_0^1 e^{s\ad x}y\, ds,\]
(\cite[Prop.~II.5.7]{Ne06}) and therefore: 

\begin{pro}\label{P:Exp}
Any $C^1$-path $\ha:I\ra W$ satisfies 
\[ \delta(\Exp\ha)_t=\int_0^1{e^{s\ad\ha(t)}\ha'(t)ds}.\] 
\end{pro}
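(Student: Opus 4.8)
The plan is to deduce the identity from the corresponding formula for the exponential map of the Banach--Lie group $G$, quoted just above from \cite[Prop.~II.5.7]{Ne06}, by transporting it through the covering/quotient construction of Appendix~\ref{A}. I would first treat the case $S = S_{H_G}(W) \subeq G$ directly: here $\Exp\res_W$ is the restriction of $\exp \: \g \to G$, and, by the discussion preceding Definition~\ref{D:tangiso}, the trivializing form $\kappa^r_S$ is the restriction $\kappa^r_G\res_S$ of the right Maurer--Cartan form of $G$. Since $\Exp$ is analytic and $\ha$ is $C^1$, the composite $\Exp\circ\ha$ is $C^1$ and coincides, as a path in $G$, with $\exp\circ\ha$, so
\[ \delta(\Exp\ha)_t = \delta(\exp\circ\ha)_t = \delta(\exp)_{\ha(t)}(\ha'(t)) = \int_0^1 e^{s\ad\ha(t)}\ha'(t)\,ds, \]
where the second equality is the chain rule $\delta(\exp\circ\ha) = \ha^*\delta(\exp)$ for logarithmic derivatives and the third is \cite[Prop.~II.5.7]{Ne06}.

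For a general Olshanski semigroup $S = S_H(W)$ with $H = \tilde H_G/\Gamma$, I would use that $S = \tilde S/\Gamma$ is a quotient of the universal covering semigroup $\tilde S$ of $S_{H_G}(W)$, that $\Exp \: W \to S$ equals $q_\Gamma\circ\tilde\Exp$ with $q_S\circ\tilde\Exp = \exp\res_W$ (so $\Exp$ lifts $\exp\res_W$), and that the trivializing form $\kappa^r_S$ is, by construction, the one characterised by $q_S^*(\kappa^r_G\res_{S_{H_G}(W)}) = \kappa^r_{\tilde S}$ and $\kappa^r_{\tilde S} = q_\Gamma^*\kappa^r_S$. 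As $q_S$ and $q_\Gamma$ are local diffeomorphisms, naturality of $\delta$ under these maps gives $\delta(\Exp\ha) = \delta((\exp\res_W)\circ\ha)$ as $\g$-valued functions on $I$, so the computation of the previous paragraph carries over verbatim.

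The only point requiring care --- bookkeeping rather than a genuine obstacle --- is to be sure that $\kappa^r_S$ on the various Olshanski semigroups is indeed the form making all these pullback identities hold, so that $\delta$ behaves functorially under the covering and quotient maps; this is exactly what is recorded in Appendix~\ref{A} (the remark preceding Definition~\ref{D:tangiso} and Remark~\ref{rem:rightinv}). An alternative route avoiding $G$ altogether would be to differentiate the relation $\Exp((t+s)x) = \Exp(sx)\Exp(tx)$ from (O3) in $t$ at $0$, obtain an ODE in $s$ for $s\mapsto\delta(\Exp)_{sx}$, solve it, and then handle the dependence on $\ha$ via the chain rule in the polar chart $\tilde H_G\times W$; but the reduction to $G$ is shorter.
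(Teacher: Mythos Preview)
Your proposal is correct and matches the paper's approach: the paper simply states the formula $\delta(\exp)_x(y)=\int_0^1 e^{s\ad x}y\,ds$ from \cite[Prop.~II.5.7]{Ne06} and then writes ``and therefore'' before Proposition~\ref{P:Exp}, leaving the reduction implicit. Your argument spells out exactly what this ``therefore'' means, namely that $\Exp$ on $S_H(W)$ is related to $\exp$ on $G$ through the covering and quotient maps under which $\kappa^r$ is defined by pullback, so that $\delta(\Exp\ha)$ and $\delta(\exp\circ\ha)$ coincide as $\fg$-valued functions.
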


From the differential of the multiplications, we obtain left and right 
actions 
\[ S \times TS \to TS, \quad (s,v) \mapsto s.v, \quad 
 TS \times S \to TS, \quad (v,s) \mapsto v.s\] 
and likewise 
\[ H \times TS \to TS, \quad (h,v) \mapsto h.v, \quad 
 TS \times H \to TS, \quad (v,h) \mapsto v.h\]  
as well as 
\[ TH \times S \to TS, \quad (v,s) \mapsto v.s, \quad 
 S \times TH \to TS, \quad (s,v) \mapsto s.v.\]  
We then also have 
\begin{equation} \label{eq:kapparel}
(\kappa^r_S)(v.s) = v \quad \mbox{ for } \quad 
v \in \fh = T_e(H), s \in S 
\end{equation}
because this is true for the subsemigroup $\Gamma_{H_G}(W)$ of $G$. 

\begin{pro}\label{P:pr} Let $\ha:I\ra H$ and $\hb:I\ra W$ be two $C^1$-paths.
For the path $\gamma(t) := \alpha(t)\Exp(\beta(t))$ in $S$ we then have 
\[ \delta(\gamma)_t=\delta(\ha)_t+\Ad(\ha(t))\delta(\Exp \circ \beta)_t.\]
\end{pro}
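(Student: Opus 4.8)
The plan is to compute $\delta(\gamma)_t = (\gamma^*\kappa^r_S)(\partial_t)$ directly by differentiating the product $\gamma(t) = \alpha(t)\Exp(\beta(t))$ and decomposing the resulting tangent vector using the left and right actions of $H$ on $TS$ introduced just before the statement. First I would write $\gamma'(t)$ as a sum of two terms coming from the two factors: one term of the form $\alpha'(t).\Exp(\beta(t))$ (freezing $\beta$) and one of the form $\alpha(t).\big(\tfrac{d}{ds}\big|_{s=t}\Exp(\beta(s))\big)$ (freezing $\alpha$). Concretely, using that $\alpha'(t) = \delta(\alpha)_t . \alpha(t)$ in $TH$ (definition of the logarithmic derivative on $H$) and that $\tfrac{d}{ds}\big|_{s=t}\Exp(\beta(s)) = \delta(\Exp\circ\beta)_t . \Exp(\beta(t))$ in $TS$ (Definition~\ref{def:logder} applied to the path $\Exp\circ\beta$, which is $C^1$ by Proposition~\ref{P:Exp}), this gives a decomposition of $\gamma'(t)$ as a sum of an element of $TH \cdot S$ and an element of $H \cdot TS$.

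Then I would apply $\kappa^r_S$ to each piece. For the first piece $\big(\delta(\alpha)_t.\alpha(t)\big).\Exp(\beta(t)) = \delta(\alpha)_t.\gamma(t)$, the relation \eqref{eq:kapparel} gives directly $\kappa^r_S\big(\delta(\alpha)_t.\gamma(t)\big) = \delta(\alpha)_t$. For the second piece $\alpha(t).\big(\delta(\Exp\circ\beta)_t.\Exp(\beta(t))\big)$, I first use right-invariance of $\kappa^r_S$ under the right action of $S$ on itself together with \eqref{eq:kapparel}, or more efficiently observe that the inner tangent vector $\delta(\Exp\circ\beta)_t.\Exp(\beta(t))$ already lies in $T_{\Exp\beta(t)}(S)$ with $\kappa^r_S$-value $\delta(\Exp\circ\beta)_t$; then acting on the left by $h := \alpha(t) \in H$ and invoking the transformation rule $\lambda_h^*\kappa^r_S = \Ad(h)\circ\kappa^r_S$ from \eqref{eq:trafo} in Remark~\ref{rem:rightinv}(b) yields $\kappa^r_S\big(\alpha(t).w\big) = \Ad(\alpha(t))\kappa^r_S(w)$ for $w \in T_{\Exp\beta(t)}(S)$. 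Summing the two contributions gives exactly $\delta(\gamma)_t = \delta(\alpha)_t + \Ad(\alpha(t))\,\delta(\Exp\circ\beta)_t$.

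The one point requiring care—the main (modest) obstacle—is justifying the product-rule decomposition of $\gamma'(t)$ intrinsically, i.e.\ that differentiating $(t,t) \mapsto \alpha(t)\Exp(\beta(t))$ along the diagonal splits as the sum of the two partial derivatives realized through the actions $TH \times S \to TS$ and $H \times TS \to TS$. This is the standard Leibniz rule for the smooth multiplication map $H \times S \to S$ applied to the $C^1$-curve $t \mapsto (\alpha(t), \Exp(\beta(t)))$ (which is $C^1$ since $\alpha$ is and since $\Exp\circ\beta$ is $C^1$ by Proposition~\ref{P:Exp}); one reduces it, as throughout the appendix, to the subsemigroup $S_{H_G}(W) \subeq G$, where the multiplication is the restriction of the group multiplication and the formula is the classical one $\delta(ab)_t = \delta(a)_t + \Ad(a(t))\delta(b)_t$ for logarithmic derivatives in a Lie group, and then transports it through the universal covering semigroup and its quotients exactly as in Remark~\ref{rem:rightinv}. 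Everything else is a formal manipulation of the two Maurer--Cartan identities \eqref{eq:kapparel} and \eqref{eq:trafo}.
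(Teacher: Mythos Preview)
Your proposal is correct and follows essentially the same approach as the paper: decompose $\gamma'(t)$ via the Leibniz rule for the smooth action $H \times S \to S$, write $\alpha'(t) = \delta(\alpha)_t.\alpha(t)$, and then apply \eqref{eq:kapparel} to the first summand and the transformation rule \eqref{eq:trafo} to the second. Your discussion of why the product rule holds intrinsically (by reduction to $S_{H_G}(W)\subeq G$ and passage through coverings) is more explicit than the paper's, which simply asserts the decomposition $\gamma'(t) = \alpha'(t).\Exp\beta(t) + \alpha(t).(\Exp\circ\beta)'(t)$ without further comment.
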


\begin{proof} First we note that 
\[ \gamma'(t) = \ha'(t).\Exp\hb(t)+\ha(t).(\Exp \circ \beta)'(t).\]
To evaluate $\delta(\gamma)_t = (\kappa^r_S)_{\gamma(t)}(\gamma'(t))$, 
we first write $\alpha'(t) = \delta(\alpha)_t.\alpha(t)$ in $TH$. 
For $v \in TH$, the relation 
$(h'h).s = h'.(h.s)$ for $s \in S, h,h' \in H$ leads to 
$(v.h).s = v.(h.s),$ 
so that \eqref{eq:kapparel} implies that 
$\kappa^r_S(\ha'(t).\Exp\hb(t)) = \delta(\alpha)_t$. 
Finally, the relation \eqref{eq:trafo} leads to 
\[ \kappa^r_S\Big(\ha(t).(\Exp \circ\hb)'(t)\Big) 
= \Ad(\alpha(t))\delta(\Exp \circ \beta)_t.\qedhere\]  
\end{proof}

\section{Families of one-parameter semigroups}\label{B}

In this section we let $\fg=(\fg,[\cdot,\cdot])$ be a locally convex Lie algebra. This means 
that $\fg$ is a locally convex space
and the Lie bracket $[\cdot,\cdot]$ is continuous. Let us first recall the following
basic notions of the differential calculus over locally convex spaces.

\begin{de}
(a) Let $E,F$ be two locally convex spaces, $U$ open and 
$f:U\subseteq E\ra F$ be a continuous map on the open set $U$ of $E$. Then $f$ is called $C^1$
if the directional derivatives 
$\partial_vf(x):=\lim_{h\ra 0}\frac{f(x+hv)-f(x)}{h}$ exist for every
$x\in U$ and $v\in E$ and the map 
$$\dd f:U\times E\ra F,\quad (x,v)\mt \partial_vf(x)$$
is continuous.\\
(b) A continuous map $f:U\subseteq E\ra V$ is called $C^k$, $k\geq 2$ if it is $C^1$
and $\dd f$ is $C^{k-1}$. It is called $C^\infty$, or \emph{smooth}, if it is $C^k$ for every $k\in\bN$.\\
(c) A locally convex space $E$ is called \emph{Mackey complete}
if for each smooth curve $\xi:[0,1]\ra E$ the weak integral 
$\int_0^1\xi(t)dt$ exists, i.e., there exists a (unique) element 
$I =: \int_0^1\xi(t)dt \in E$ satisfying 
\[ \alpha(I) = \int_0^1 \alpha(\xi(t))\, dt 
\quad \mbox{ for each } \quad \alpha \in E'.\] 
This implies in particular that 
the curve $\eta(s) := \int_0^s\xi(t)dt$ is smooth 
and satisfies $\eta'=\xi$.
\end{de}

\begin{de}\label{D:smooth}
A locally convex Lie algebra $\fg$ is called $\ad$-\emph{integrable}
if for every $x\in\fg$ the (linear) vector field defined by 
$\ad x$ is complete, that is, if there exists a smooth map
$\Phi^x:\bR\times\fg\ra\fg$ with
$\df{t}\Phi^x(t)y=\ad x(y)$. We will then use the notation $e^{t\ad x}:=\Phi^x(t)$.
\end{de}

We consider a linear homomorphism
$$\ha:\fg\ra\End(\D)$$ 
in the space of endomorphism
of a dense domain $\cD$ of a Banach space $E$. We assume
that $\ha$ is strongly continuous in the sense that for every
$v\in\cD$ the map $\ha^v:\fg\ra E$, $x\mt\ha(x)v$ is continuous.
Now let $W$ be a convex cone in $\g$ which is relatively open in 
its span $\fq:=W-W$ (cf.\ Definition~\ref{D:5.3}), 
and assume that,
for every $x\in W$, the closure $\ov{\ha(x)}$ of $\ha(x)$
generates a strongly continuous semigroup 
$\Big(e^{t\ov{\ha(x)}}\Big)_{t\geq 0}$ 
and that the map $s_\ha(x):=\sup_{0\leq t\leq 1}\nm{e^{t\ov{\ha(x)}}}$ is 
locally bounded on $W$.

\begin{rem} If $E=\cH$ is a Hilbert and for every $x\in W$ the semigroup
$\Big(e^{t\ov{\ha(x)}}\Big)_{t\geq 
0}$ consists of normal operators, then the fact it generates  
a commutative $C^*$-algebra isomorphic to some 
$C(X)$ implies that 
\[ \nm{e^{t\ov{\ha(x)}}}=\nm{e^{\ov{\ha(x)}}}^t\quad \text{for}\ x\in W,\] 
and hence that $s_\ha(x)=\max\Big\{1,\nm{e^{\ov{\ha(x)}}}\Big\}$.
\end{rem}

We will need the following lemma (see \cite[Lemma 9]{Mer10}):
\begin{lem}\label{L:diff}
Consider two operators $A$ and $B$ 
defined on a common dense domain $\D$ of the Banach space $E$ and whose 
closures generates strongly continuous semigroups 
$\Big(e^{t\ov{A}}\Big)_{t\geq0}$
and $\Big(e^{t\ov{B}}\Big)_{t\geq0}$ respectively.  
Assume further that $e^{sA}\D\subseteq\D$ 
for all $s\geq 0$. If, 
for some $v\in\D$ the map $s\mt Be^{sA}v$
is continuous on $[0,\infty)$, then 
\[ e^{t\ov B}v-e^{t\ov A}v=\int_0^t{e^{s\ov B}(B-A)e^{(t-s)\ov A}vds}
\quad \mbox{ for } \quad t \geq 0.\] 
\end{lem}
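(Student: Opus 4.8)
The plan is to reduce the statement to the familiar fundamental-theorem-of-calculus identity $e^{tB} - e^{tA} = \int_0^t e^{sB}(B-A)e^{(t-s)A}\,ds$ by differentiating the function $s \mapsto e^{s\ov B}e^{(t-s)\ov A}v$ for fixed $t \geq 0$, and then integrating over $[0,t]$. First I would fix $v \in \cD$ and $t \geq 0$ and consider the path $g(s) := e^{(t-s)\ov A}v$ for $s \in [0,t]$. By hypothesis $e^{(t-s)A}\cD \subeq \cD$, so $g(s) \in \cD$, and since $v$ is in the domain of $\ov A$ (indeed $\cD \subeq \cD(\ov A)$), the path $g$ is differentiable with $g'(s) = -\ov A e^{(t-s)\ov A}v = -e^{(t-s)\ov A}\ov A v$; here one uses that strongly continuous semigroups are differentiable on their generator's domain and that the generator commutes with the semigroup. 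The continuity assumption ``$s \mapsto Be^{sA}v$ is continuous on $[0,\infty)$'' is precisely what is needed to make $s \mapsto (B-A)g(t-s) = (B-A)e^{(t-s)A}v$ a continuous $E$-valued path, so that all the integrands below are genuinely (Riemann, hence weak) integrable.

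Next I would apply the Chain Rule for operator-valued paths (the paper's Lemma~\ref{L:PR}) with $K(s) := e^{s\ov B} \in L_s(E,E)$, which is strongly continuous in $s$ and strongly differentiable on the dense domain $\cD(\ov B) \supeq \cD$ with $K'(s)w = \ov B e^{s\ov B}w$, and with $\hg(s) := g(s) = e^{(t-s)\ov A}v$, a differentiable path lying in $\cD \subeq \cD(\ov B)$. The Chain Rule gives that $s \mapsto K(s)\hg(s) = e^{s\ov B}e^{(t-s)\ov A}v$ is differentiable on $[0,t]$ with derivative
\[
\dif{s}\, e^{s\ov B}e^{(t-s)\ov A}v
= \ov B e^{s\ov B}e^{(t-s)\ov A}v - e^{s\ov B}\ov A e^{(t-s)\ov A}v
= e^{s\ov B}(B - A)e^{(t-s)\ov A}v,
\]
where in the last step I pull $e^{s\ov B}$ to the left of $\ov B$ (commutation of a semigroup with its generator) and use that $e^{(t-s)\ov A}v = e^{(t-s)A}v \in \cD$, so that $\ov A$ and $\ov B$ there agree with $A$ and $B$. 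Since the right-hand side is a continuous $E$-valued function of $s$ on $[0,t]$ — this is where the hypothesis on $s \mapsto Be^{sA}v$ enters, together with strong continuity of both semigroups and boundedness of $e^{s\ov B}$ on compact $s$-intervals — the fundamental theorem of calculus for Banach-space-valued paths yields
\[
e^{t\ov B}v - e^{t\ov A}v
= \Big[e^{s\ov B}e^{(t-s)\ov A}v\Big]_{s=0}^{s=t}
= \int_0^t e^{s\ov B}(B-A)e^{(t-s)\ov A}v\, ds.
\]

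The main obstacle, and the only point requiring genuine care, is the justification of the differentiation step: one must know that $\hg(s)=e^{(t-s)\ov A}v$ stays in a domain on which $K(s)=e^{s\ov B}$ is strongly differentiable, and that the two ``naive'' derivative terms can legitimately be recombined into $e^{s\ov B}(B-A)e^{(t-s)\ov A}v$ — i.e.\ that no domain issue spoils the cancellation. This is handled precisely because $e^{(t-s)A}\cD \subeq \cD$ places $\hg(s)$ back in the common core $\cD$ where $A$, $B$ act as honest operators, and because $\cD \subeq \cD(\ov B)$ lets the Chain Rule (Lemma~\ref{L:PR}) apply with that choice of $\cD$. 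Everything else — interchanging the weak integral with continuous linear functionals, the endpoint evaluation at $s=0$ and $s=t$ — is routine. This is exactly the argument of \cite[Lemma~9]{Mer10}, adapted to the present notation.
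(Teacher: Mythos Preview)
The paper does not give its own proof of this lemma; it simply cites \cite[Lemma~9]{Mer10}. Your argument is the standard one and is correct: differentiate $s\mapsto e^{s\ov B}e^{(t-s)\ov A}v$ using the Chain Rule of Lemma~\ref{L:PR}, exploit the hypothesis $e^{s\ov A}\cD\subseteq\cD$ to keep the path in the common core, and integrate. This is precisely the argument one finds in the cited reference, so your proposal matches the intended proof.
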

We then have the following proposition generalizing 
\cite[Prop. 10]{Mer10}:

\begin{pro}\label{P:derpath}Let $\fg$ be an $\ad$-integrable Mackey complete locally convex
Lie algebra. If, for every $x\in W$ and every $y\in\fg$, 
\begin{equation}\label{E:localcomrel}
e^{\ov{\ha(x)}}\cD\subseteq\cD\quad\text{and}\quad\ha(y)e^{\ov{\ha(x)}}=
e^{\ov{\ha(x)}}\ha(e^{-\ad x}y), \end{equation}
then the map 
$$\hat{\rho}:W\times E\ra E,\ (x,v)\mt \rho^v(x):=e^{\ov{\ha(x)}}v$$
is continuous, and for every $v\in\cD$, $\rho^v$ is $C^1$ with
\[ T_x(\rho^v)(y)=
\ha\Big(\int_0^1{e^{s\ad x}yds}\Big)e^{\ov{\ha(x)}}v=
e^{\ov{\ha(x)}}\ha\Big(\int_0^1{e^{-s\ad x}yds}\Big)v.\] 
\end{pro}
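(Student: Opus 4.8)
The plan is to establish the three claimed assertions --- continuity of $\hat\rho$, the $C^1$-property of $\rho^v$, and the formula for $T_x(\rho^v)$ --- by reducing them to the Banach-space lemmas already in hand, above all Lemma~\ref{L:diff} and the integrability hypothesis \eqref{E:localcomrel}. First I would fix $x_0\in W$ and a relatively compact neighbourhood $U\subeq W$ of $x_0$ on which $s_\alpha$ is bounded, say $s_\alpha\leq M$. The uniform bound $\|e^{t\ov{\ha(x)}}\|\leq M$ for $x\in U$, $0\leq t\leq 1$, is the engine driving all the estimates. For the continuity of $\hat\rho$ on $U\times E$, an $\epsilon/3$-argument reduces matters to showing that $x\mapsto e^{\ov{\ha(x)}}v$ is continuous for $v$ in the dense domain $\cD$, since $(x,v)\mapsto e^{\ov{\ha(x)}}v$ is then equicontinuous in $v$. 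For such $v$ one writes, for $x,x'\in U$,
\[ e^{\ov{\ha(x)}}v-e^{\ov{\ha(x')}}v
= \int_0^1 e^{s\ov{\ha(x)}}\bigl(\ha(x)-\ha(x')\bigr)e^{(1-s)\ov{\ha(x')}}v\, ds,\]
using Lemma~\ref{L:diff} (whose hypotheses hold because \eqref{E:localcomrel} gives $e^{s\ov{\ha(x')}}\cD\subeq\cD$ and $s\mapsto \ha(x)e^{s\ov{\ha(x')}}v$ is continuous, the latter again from \eqref{E:localcomrel} together with strong continuity of $\ha$); then one bounds the integrand by $M^2\|(\ha(x)-\ha(x'))e^{(1-s)\ov{\ha(x')}}v\|$ and uses strong continuity of $\ha$ on the compact set $\{e^{(1-s)\ov{\ha(x')}}v : s\in[0,1]\}$ together with Mackey completeness to make the integral small.

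Next I would compute the directional derivative $\partial_y\rho^v(x)$ at $x\in W$ in direction $y\in\fq$, for $v\in\cD$. Here the key identity is again Lemma~\ref{L:diff}, now comparing the semigroups generated by $\ov{\ha(x+hy)}$ and $\ov{\ha(x)}$ evaluated at time $t=1$:
\[ \frac{e^{\ov{\ha(x+hy)}}v-e^{\ov{\ha(x)}}v}{h}
= \int_0^1 e^{s\ov{\ha(x+hy)}}\,\frac{\ha(x+hy)-\ha(x)}{h}\,e^{(1-s)\ov{\ha(x)}}v\, ds,\]
and since $\ha$ is linear, $\frac{\ha(x+hy)-\ha(x)}{h}=\ha(y)$ identically. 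So the difference quotient equals $\int_0^1 e^{s\ov{\ha(x+hy)}}\ha(y)e^{(1-s)\ov{\ha(x)}}v\, ds$; as $h\to 0$ the first factor converges strongly to $e^{s\ov{\ha(x)}}$ (by the continuity of $\hat\rho$ just established, applied to the vectors $\ha(y)e^{(1-s)\ov{\ha(x)}}v\in\cD$ --- note $e^{(1-s)\ov{\ha(x)}}v\in\cD$ and $\ha(y)$ preserves nothing but lands in $E$; if one needs the vector in $\cD$ one iterates \eqref{E:localcomrel} to see $e^{(1-s)\ov{\ha(x)}}v\in\cD$ and then $\ha(y)$ of it may leave $\cD$, in which case one uses the uniform bound plus the plain continuity statement $\hat\rho$ extended to all of $E$ for that factor), uniformly in $s$ by the equicontinuity coming from the bound $M$. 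Dominated convergence then yields
\[ \partial_y\rho^v(x) = \int_0^1 e^{s\ov{\ha(x)}}\ha(y)e^{(1-s)\ov{\ha(x)}}v\, ds.\]
Now I would use \eqref{E:localcomrel} in the form $\ha(y)e^{(1-s)\ov{\ha(x)}}v = e^{(1-s)\ov{\ha(x)}}\ha(e^{-(1-s)\ad x}y)v$ to pull the semigroup through, getting $\int_0^1 e^{\ov{\ha(x)}}\ha(e^{-(1-s)\ad x}y)v\, ds = e^{\ov{\ha(x)}}\ha\bigl(\int_0^1 e^{-(1-s)\ad x}y\, ds\bigr)v = e^{\ov{\ha(x)}}\ha\bigl(\int_0^1 e^{-s\ad x}y\, ds\bigr)v$, where the integral of the $\ad$-action makes sense by $\ad$-integrability and Mackey completeness, and $\ha$ commutes with it by continuity and linearity; similarly pulling to the left gives the other form $\ha\bigl(\int_0^1 e^{s\ad x}y\, ds\bigr)e^{\ov{\ha(x)}}v$. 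This is exactly the asserted formula.

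Finally, to conclude $\rho^v$ is $C^1$ I must check the joint continuity of $(x,y)\mapsto T_x(\rho^v)(y)$ on $W\times\fq$. Using the formula $T_x(\rho^v)(y)=e^{\ov{\ha(x)}}\ha\bigl(\int_0^1 e^{-s\ad x}y\, ds\bigr)v$, continuity in $y$ is clear by linearity and continuity of $\ha$ and of the $\ad$-integral; continuity in $x$ follows from continuity of $\hat\rho$ (just proved) together with continuity of $x\mapsto \int_0^1 e^{-s\ad x}y\, ds$ as a map into $\g$ (a consequence of $\ad$-integrability of the locally convex Lie algebra and smoothness of $\Phi^x$) and strong continuity of $\ha$ --- one writes the difference $T_x(\rho^v)(y)-T_{x'}(\rho^v)(y)$, inserts and subtracts a mixed term, and bounds one piece by $M\|\ha(\int e^{-s\ad x}y)v - \ha(\int e^{-s\ad x'}y)v\|$ and the other by $\|(e^{\ov{\ha(x)}}-e^{\ov{\ha(x')}})w\|$ with $w\in\cD$. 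The main obstacle, and the step deserving the most care, is the first one: proving strong continuity of $x\mapsto e^{\ov{\ha(x)}}$ on $\cD$, because there the uniform semigroup bound $s_\alpha\leq M$ on a neighbourhood is essential and one has to handle the subtlety that $\ha(y)$ need not preserve $\cD$, so the intermediate vectors in the integral representations live in $E$, not $\cD$ --- this is precisely why one proves the plain (not just strong-on-$\cD$) continuity of $\hat\rho:W\times E\to E$ before differentiating. Everything downstream is then a routine application of dominated convergence with the uniform bound $M$, the integrability hypothesis \eqref{E:localcomrel}, and Mackey completeness.
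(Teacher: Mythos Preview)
Your approach is essentially the same as the paper's: both use Lemma~\ref{L:diff} to write the difference $e^{\ov{\ha(x)}}v-e^{\ov{\ha(x')}}v$ (and the difference quotient) as an integral, then pass to the limit using the uniform bound $s_\alpha\leq M$ and the commutation relation~\eqref{E:localcomrel}. One point worth tightening: in your continuity step you bound by $M^2\|\ha(x-x')e^{(1-s)\ov{\ha(x')}}v\|$ and appeal to ``strong continuity of $\alpha$ on the compact set $\{e^{(1-s)\ov{\ha(x')}}v\}$''; but strong continuity is only pointwise in $\cD$, so to get uniformity in $s$ you must first push through via~\eqref{E:localcomrel} to obtain $e^{(1-s)\ov{\ha(x')}}\ha\big(e^{-(1-s)\ad x'}(x-x')\big)v$, reducing to the continuous map $(s,z)\mapsto\ha^v(e^{-(1-s)\ad x'}z)$ on $[0,1]\times\fg$ evaluated at a fixed vector --- exactly the paper's argument. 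For the directional derivative, you invoke dominated convergence directly, whereas the paper splits $\int_0^1$ into $\int_0^\epsilon+\int_\epsilon^1$, handling the first piece by a uniform smallness estimate and the second by joint continuity on $[\epsilon,1]\times[-\tau,\tau]$; your route is legitimate in a Banach space (which $E$ is) since pointwise convergence of the integrand follows from the continuity of $\hat\rho$ already proved (note $e^{s\ov{\ha(x+hy)}}=e^{\ov{\ha(s(x+hy))}}$ for $s>0$) and the uniform bound $M^2\sup_{s}\|\ha(e^{-(1-s)\ad x}y)v\|$ serves as dominating constant. The remark about ``uniformly in $s$ by equicontinuity'' is not needed and not quite right --- dominated convergence requires only pointwise convergence plus the bound.
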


\begin{prf} Let $v\in\D$ and $x,y\in\fg$. 
From the relation 
\begin{equation}\label{E:estimb}
\ha(y)e^{s\ov{\ha(x)}}v
=e^{s\ov{\ha(x)}}\ha(e^{-s\ad{x}}y)v,
\end{equation}
the continuity of the map 
$[0,\infty[\times E\to E, (s,v)\mt e^{s\ov{\ha(x)}}v$ 
and the strong continuity of $\alpha$, we derive that 
$s\mt\ha(y)e^{s\ov{\ha(x)}}v$ is continuous on $[0,\infty[$.
We can therefore apply Lemma~\ref{L:diff} to obtain 
\begin{align}
&e^{\ov{\ha(y)}}v-e^{\ov{\ha(x)}}v=\label{E:intf}
\int_0^1{e^{u\ov{\ha(y)}}\ha(y-x)e^{(1-u)\ov{\ha(x)}}vdu}\\
&=\int_0^1{e^{u\ov{\ha(y)}}e^{(1-u)\notag
\ov{\ha(x)}}\ha\big(e^{(u-1)\ad x}(y-x)\big)vdu}.
\end{align}
Let $\he>0$ let $\V$ be a $0$-neighbourhood in $\fg$ such that
for every $y\in x+\V$, $s_\ha(y)<M$. Consider now the continuous map
$$F:[0,1]\times\fg\ra E,\ (u,z)\mt\ha^v\big(e^{(u-1)\ad x}z\big).$$
Since $F([0,1]\times \{0\}) = \{0\}$, 
the compactness of $[0,1]$ implies the existence 
of a $0$-neighborhood $\V'\subeq \g$ such that 
$F([0,1] \times \V')$ is contained in the open 
ball $B(0,\frac{M}{\he})$ of radius $\frac{M}{\he}$ around $0$ in $E$. 
Thus, for every $y\in x+\V\cap\V'$, 
$\left|e^{\ov{\ha(y)}}v-e^{\ov{\ha(x)}}v\right|\leq\he$,
and this proves that $\rho^v$ is continuous.
The local boundedness of 
$s_\alpha \: W \to \R$ further implies that
$\rho^v$ is continuous for every $v\in E$, and hence the map
\[ \hat{\rho}:W\times E\ra E,\ (x,v)\mt e^{\ov{\ha(x)}}v\] 
is continuous.

Let $y\in\fq$ and let $\tau>0$ such that $x+hy\in W$ for $|h|<\tau$. We derive 
from \eqref{E:intf} the formula 
\[ \frac{e^{\ov{\ha(x+hy)}}v-e^{\ov{\ha(x)}}v}{h}=
\int_0^1{e^{s\ov{\ha(x+hy)}}\ha(y)e^{(1-s)\ov
{\ha(x)}}
vds}.\] 
Let us fix $0<\he\leq1$. Then the continuity of the map
\[ (s,h)\mt e^{s\ov{\ha(x+hy)}}\ha(y)e^{(1-s)\ov{\ha(x)}}v\] 
on $[\he,1]\times[-\tau,\tau]$ implies that we can pass to the 
limit under the integral sign to derive
\begin{equation}\label{E:lim}
\lim_{h\ra0}\int_\he^1{e^{s\ov{\ha(x+hy)}}\ha(y)e^{(1-s)\ov
{\ha(x)}}vds}=\int_\he^1{e^{s\ov{\ha(x)}}\ha(y)e^{(1-s)
\ov{\ha(x)}}vds}.
\end{equation}
The same type of argument as the one used for the continuity of $\rho^v$
shows that the integrand of
$$\int_0^\he{e^{s\ov{\ha(x+hy)}}\ha(y)e^{(1-s)
\ov{\ha(x)}}vds}$$
is bounded unformly with respect to $(s,h)$, and hence the integral is uniformly small
(with respect to $h$) when $\he$ is sufficently close to $0$.
Therefore
\begin{align*}
(\partial_y\rho^v)(x)&=\int_0^1{e^{s\ov{\ha(x)}}\ha(y)e^{(1-s)
\ov{\ha(x)}}vds}=\int_0^1{\ha(e^{s\ad{x}}y)
e^{\ov{\ha(x)}}vds}\\
&=\ha\Big(\int_0^1{e^{s\ad{x}}yds}\Big)
e^{\ov{\ha(x)}}v,
\end{align*}
where the last equality follows from the uniqueness of the weak integral.
Similarly we obtain
$$\partial_y\rho^v(x)=e^{\ov{\ha(x)}}\ha\Big(\int_0^1{e^{-s\ad{x}}yds}\Big)v,$$
and now the continuity of $\hat\rho$ implies that
$T\rho^v:W\times\fq\ra E$
is continuous, i.e, that $\rho^v$ is a $C^1$-map. 
\end{prf}

\bibliographystyle{amsalpha}
%\bibliography{ref}

\providecommand{\bysame}{\leavevmode\hbox to3em{\hrulefill}\thinspace}
\providecommand{\MR}{\relax\ifhmode\unskip\space\fi MR }
% \MRhref is called by the amsart/book/proc definition of \MR.
\providecommand{\MRhref}[2]{%
  \href{http://www.ams.org/mathscinet-getitem?mr=#1}{#2}
}
\providecommand{\href}[2]{#2}

\end{document}